\documentclass{article}
\usepackage[utf8]{inputenc}
\usepackage{geometry}
\usepackage{float}
\usepackage{caption}
\usepackage{graphicx}
\usepackage{xcolor}
\usepackage{amsmath}
\allowdisplaybreaks
\usepackage{amsthm}
\usepackage{amssymb}
\usepackage{mathrsfs}
\usepackage{amsfonts}
\usepackage{lineno}
\usepackage{fancyhdr}
\usepackage{dsfont}
\usepackage{indentfirst}
\usepackage{hyperref}     
\usepackage{cleveref}
\usepackage{enumitem}
\usepackage{aliascnt} 
\usepackage{tikz}  
\usetikzlibrary{shapes,arrows.meta,positioning}
\usepackage[normalem]{ulem}
\allowdisplaybreaks   
\newtheorem{claim}{Claim}

\newtheorem*{claim*}{Claim}
\newenvironment{clproof}{\begin{list}{}{%
              \setlength{\leftmargin}{3mm}%
              } \item {\it Proof.} }{\hfill$\lozenge$\end{list}}
\newtheorem{theorem}{Theorem}[section]

\newtheorem*{THMMAIN}{Theorem~\ref{main:path}}
\newtheorem*{THMMAIN2}{Theorem~\ref{main:star}}
\newtheorem*{THMMAIN3}{Theorem~\ref{main:cycle}}
\newaliascnt{lemma}{theorem}
\newtheorem{lemma}[lemma]{Lemma}
\aliascntresetthe{lemma}
\Crefname{lemma}{Lemma}{Lemmas}
\Crefname{lemma}{Lemma}{Lemmas}

\newaliascnt{corollary}{theorem}

\aliascntresetthe{corollary}
\Crefname{corollary}{Corollary}{Corollaries}
\Crefname{corollary}{Corollary}{Corollaries}

\newaliascnt{proposition}{theorem}
\newtheorem{proposition}[proposition]{Proposition}
\aliascntresetthe{proposition}
\Crefname{proposition}{Proposition}{Propositions}
\Crefname{proposition}{Proposition}{Propositions}

\newaliascnt{observation}{theorem}

\aliascntresetthe{observation}
\Crefname{observation}{Observation}{Observations}
\Crefname{observation}{Observation}{Observations}

\newaliascnt{example}{theorem}

\aliascntresetthe{example}
\Crefname{example}{Example}{Examples}
\Crefname{example}{Example}{Examples}

\newaliascnt{question}{theorem}
\newtheorem{question}[question]{Question}
\aliascntresetthe{question}
\Crefname{question}{Question}{Questions}
\Crefname{question}{Question}{Questions}

\theoremstyle{definition}  
\newaliascnt{remark}{theorem}
\newtheorem{remark}[remark]{Remark}
\aliascntresetthe{remark}
\Crefname{remark}{Remark}{Remarks}
\Crefname{remark}{Remark}{Remarks}

\newcommand{\rw}{\mathrm{rwSAT}}
\newcommand{\s}{\mathrm{rwsat}}

\newcommand{\ex}{{\rm ex}}

\title{Weak rainbow saturation numbers of paths, stars and cycles}
  \author{Jiawen Bo\thanks{School of Mathematical Sciences and LPMC, Nankai University, Tianjin 300071, P.R.
 China. Email: bojiawen@mail.nankai.edu.cn.}, Xiaopan Lian\thanks{  Center for Combinatorics and LPMC, Nankai University, Tianjin 300071, P.R.
  China. Email: Lian@nankai.edu.cn.},~Jianing Liu\thanks{School of Mathematical Sciences and LPMC, Nankai University, Tianjin 300071, P.R.
 China. Email: liujianing@mail.nankai.edu.cn.}}

\begin{document}
\maketitle

\begin{abstract}
An edge-colored graph is \emph{rainbow} if all of its edges receive distinct colors.  
For a fixed graph $H$,  an   edge-colored graph $F$ is  called weakly  $H$-rainbow  saturated if there exists  an ordering $e_1,e_2,\ldots,e_{|E(\bar{F})|}$ of $E(\bar{F})$ such that, for any edge coloring $c$ of $E(\bar{F})$ with $c(e_i)\neq c(e_j)$, there is always a rainbow copy of $H$ that contains $e_i$ in $F+\{e_1,e_2,\ldots,e_i\}$.  The \emph{weak rainbow saturation number} $\operatorname{rwsat}(n,H)$ is the minimum number of edges in a weakly $H$-rainbow saturated graph on $n$ vertices. 
Li, Ma, and Xie [JGT, 2025] showed  that  
$\lim_{n\to\infty} \frac{\operatorname{rwsat}(n,H)}{n}$
exists for every nonempty graph $H$. 

Paths and stars attain, respectively, the minimum and maximum ordinary weak saturation numbers among all trees of the same order. We determine their weak rainbow saturation numbers exactly. For all $\ell>30$, we show that  $$ \ell+1=\s(n,P_\ell)< \s(n,S_\ell)=\binom{\ell}{2}-1$$ where $P_\ell$ and  $S_\ell$ denote the path and star on $\ell$ vertices, respectively. Thus, their dependence on $\ell$ is linear for paths and quadratic for stars. We then focus on cycles.  Li, Ma, and Xie asked whether $\operatorname{rwsat}(n,C_\ell)$ has leading term $\frac32n$ for every $\ell\ge4$. We answer this question negatively by giving an explicit construction showing that, for every $\ell\ge4$ and all sufficiently large $n$, $$\s(n,C_\ell)< \frac{\ell}{\ell-1}n+c_\ell,$$ 
where $c_\ell$ depends only on $\ell$. Since $\frac{\ell}{\ell-1}<\frac32$, this strictly improves the proposed leading coefficient for every cycle $C_\ell$ with $\ell\ge4$.
\end{abstract}
\noindent\textbf{Keywords:}  
saturation number; weak rainbow saturation number; rainbow coloring  
\section{Introduction}

Given a graph $F$, denote by $\Delta(F)$, $\delta(F)$, $e(F)$, $ n(F)$, and $G_F$ the \emph{maximum degree}, \emph{minimum degree}, number of edges, number of vertices of $F$, and the subgraph of $F$ that consists of all vertices of degree at least 1, respectively. Especially, call $n(F)$ the \emph{order} of $F$.  For a vertex $v$, denote by $N_F(v)$ and $d_F(v)$ the \emph{neighborhood} and \emph{degree} of $v$, respectively.  Especially, for a vertex $v$ of degree 1, denote by $e_v$ the unique edge incident with $v$ in $F$. 
For  $X\subseteq E(\bar{F})$, denote by $F+X$ the graph obtained from $F$ by adding edges in $X$. For $X\subseteq V(F)$, denote by  $\partial_F(X)$  the set of edges with one endpoint in
$X$ and the other end in $V(F)\backslash X$. In particular, $\partial_F(v)=\partial_F(\{v\})$ and $|\partial_F(v)|=d_F(v)$. For $i=0,1,\ldots,\Delta(F)$, let $R_i=\{v\in V(F):d_F(v)=i\}$. For $X\subseteq E(F)$ or $X\subseteq V(F)$, denote by $F-X$ the graph obtained from $F$ by deleting elements in $X$ and $F[X]$ the subgraph of $F$ induced by $X$.  We call $F$ a \emph{rainbow edge-colored} graph if all the edges of $F$ are colored differently. Throughout, we denote by   $c^*$ a rainbow edge coloring of $F$. For an integer $\ell$, denote by $P_\ell$, $S_\ell$, $C_\ell$, and  $K_\ell$ the path, star, cycle and complete graph on $\ell$ vertices, respectively.

Graph saturation is a classical topic in extremal graph theory. Given a graph $H$, a graph $F$ is called \emph{$H$-saturated} if $F$ contains no copy of 
$H$ as a subgraph, but for any $ e \in E(\bar{F})$, $F+  e $ contains a copy of 
$H$ as a subgraph, where $\bar{F}$ is the complement of $F$. The \emph{saturation number} $sat(n,H)$ is defined as the minimum number of edges of an $H$-saturated graph with $n$ vertices.  
  A graph $F$ on $n$ vertices is called \emph{weakly $H$-saturated} if there exists an ordering $ e_1 , e_2 ,\dots, e_t $ of the edges of $\overline{F}$ such that, setting $F_0=F$ and $F_i=F_{i-1}+ e_i $ for $1\le i\le t$, we have $F_t=K_n$, and for each $i$, the edge $ e_i $ belongs to a copy of $H$ in $F_i$ whose other edges are contained in $F_{i-1}$. The \emph{weak saturation number} of $H$, denoted by $\operatorname{wsat}(n,H)$, is the minimum number of edges in a weakly $H$-saturated graph on $n$ vertices.  Clearly,
$\operatorname{wsat}(n,H)\le \operatorname{sat}(n,H).$  For background on saturation and weak saturation, see
\cite{Bollobas1967,Erdos1964,Kaszonyi1986,Zykov1949}.
  
  


Saturation problems for edge-colored graphs were first studied by Hanson and Toft \cite{Hanson1987}.  In this paper, we are interested in edge-colored version weak saturation number. A related definition is rainbow saturation. 
Let $F$ be an edge-colored graph. We say that $F$ is \emph{rainbow $H$-saturated} if $F$ does not contain a rainbow copy of $H$ as a subgraph, but for any  edge $ e \in E(\bar{F})$ and any color assigned to $ {e}$, the resulting edge-colored graph $F+ {e}$ contains a rainbow copy of $H$. The \emph{rainbow saturation number}, denoted by $\operatorname{rsat}(n,H)$, is the minimum number of edges in a rainbow $H$-saturated graph on $n$ vertices. Gir\~ao, Lewis, and Popielarz \cite{Girao2020}  initiated the study of this parameter, and Behague,  Johnston, Letzter, Morrison, and  Ogden \cite{Behague2024} proved that $\operatorname{rsat}(n,H)=O_H(n)$ 
for every nonempty graph $ H $.


Behague et al. \cite{Behague2024} also introduced weak rainbow
saturation. For a fixed graph $H$, we say that an edge-colored graph $F$ is \emph{weakly  $H$-rainbow  saturated} if there exists an ordering $e_1,e_2,\ldots,e_{e(\bar{F})}$ of $E(\bar{F})$ such that, for any edge coloring $c$ of $E(\bar{F})$ with $c(e_i)\neq c(e_j)$, there is always a rainbow copy of $H$ that contains $e_i$ in  $F+\{e_1,e_2,\ldots,e_i\}$.   The corresponding edge ordering $e_1,e_2,\ldots,e_{e(\bar{F})}$  is called a \emph{weakly $H$-rainbow saturated ordering}. The \emph{weak rainbow saturation number} of $H $, denoted by $\operatorname{rwsat}(n,H)$, is the minimum number of edges in a weakly $H$-rainbow saturated graph on $n$ vertices. Denote by $\rw(n, H)$ the set of weakly $H$-rainbow saturated graphs on $n$ vertices with   $\operatorname{rwsat}(n,H)$ many edges. 

By the definitions, we have $\operatorname{wsat}(n, H)\leq \operatorname{rwsat}(n, H)\leq \operatorname{rsat}(n, H)$.  Behague et al. \cite{Behague2024} asked whether the limit
 $\lim_{n\to\infty}
\frac{\operatorname{rwsat}(n,H)}{n}$
 exists for every nonempty graph  $H $.
  Li, Ma, and Xie \cite{Li2025} proved a more precise result implying that $\lim_{n\to\infty} \frac{\operatorname{rwsat}(n,H)}{n}$
exists for every nonempty graph $H$. 

To state their result, for any graph $H$, let $f(H)$ be  the smallest integer $n$ such that for each $N\in \{n-1, n\}$ we have $\ex(N, \mathscr{H})\leq {N\choose 2}-2N-2$, where $\mathscr{H}= \{H-\{u, v\}\colon\, uv\in E(H)\}$.  Here   $\ex(n, \mathscr{F})$ is  the \emph{Tur\'{a}n number} of $\mathscr{F}$, that is, the maximum number of edges in an $n$-vertex graph that contains no member  of $\mathscr{F}$.  Their result may be stated as follows. 

\begin{theorem}[\cite{Li2025}]\label{th:rwsat_bounds}
Let $H$ be a non-empty graph. Then the following statements hold.
\begin{itemize}
\item[{\rm (i)}] If $H$ contains a vertex of degree 1 and $n>f(H)+1$, then $\s(n, H)\leq {f(H)+1 \choose 2}$.
\item[{\rm (ii)}] If $H$ contains no vertex of degree 1 and $n>f(H)+\delta(G_H)$, then
$$\frac{1}{2}\delta(G_H)n \leq\s(n, H)\leq \delta(G_H)(n-f(H)- \delta(G_H))+{f(H)+ \delta(G_H) \choose 2}.$$
\end{itemize}
\end{theorem}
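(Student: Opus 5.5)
The plan is to prove the lower bound in (ii) by an adversarial colouring argument and both upper bounds by an explicit construction: a rainbow clique plus sparse attachments. The Turán-type definition of $f(H)$ is used only to find a copy of some member of $\mathscr{H}$ inside the clique after deleting the few clique edges whose colours clash.

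\emph{Lower bound in (ii).} Let $F$ be weakly $H$-rainbow saturated with ordering $e_1,\dots,e_t$, and suppose some vertex $v$ has $d_F(v)=d<\delta:=\delta(G_H)$. Since $H$ has no vertex of degree $1$, every vertex of $G_H$ has degree at least $\delta\ge 2$. Let $e_i$ be the first edge of the ordering incident with $v$; it exists because $F\neq K_n$ at $v$.
\begin{itemize}
\item If $d\le \delta-2$, then in $F+\{e_1,\dots,e_i\}$ the vertex $v$ has degree $d+1<\delta$. So no copy of $H$ can use $e_i$, since both endpoints of $e_i$ are non-isolated in $G_H$.
\item If $d=\delta-1\ge1$, choose the colouring $c$ so that $c(e_i)$ equals $c^*(f)$ for some $F$-edge $f$ at $v$, and give all other added edges fresh distinct colours. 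This is legal, because only the added edges must receive pairwise distinct colours. A rainbow copy of $H$ through $e_i$ cannot use $f$, so $v$ has at most $\delta-1$ usable edges, a contradiction.
\end{itemize}
Hence $\delta(F)\ge\delta$ and $e(F)\ge \delta n/2$.

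\emph{Upper bounds: the key step.} Let $K$ be a rainbow-coloured clique. Suppose two vertices $x,y\notin K$ are each already joined to a set $S\subseteq V(K)$ with $|S|=N$, where $N\in\{n-1,n\}$-type sizes (namely $f(H)$ or $f(H)+1$) are as in the definition of $f(H)$. Then the edge $xy$ can be added, as follows.
\begin{itemize}
\item The $2N+1$ edges $xy$, $xS$, $yS$ were all added, so they have pairwise distinct colours.
\item Since $K$ is rainbow, at most $2N+1$ edges of $K[S]$ share a colour with one of them. Delete those edges.
\item At least $\binom N2-2N-1>\ex(N,\mathscr H)$ edges remain, so $K[S]$ contains a rainbow copy of some $H-\{u,v\}$ with $uv\in E(H)$.
\item Map $u\mapsto x$ and $v\mapsto y$. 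All edges from $x,y$ to $S$ are present, so this gives a rainbow $H$ through $xy$.
\end{itemize}
Having both sizes available is what lets this step apply whether or not $x$ or $y$ is already counted in the clique side.

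\emph{Case (i).} Take $F$ to be a rainbow $K_{f(H)+1}$ plus $n-f(H)-1$ isolated vertices; then $e(F)=\binom{f(H)+1}{2}$. The ordering first adds all edges $xw$ with $x$ outside and $w$ in the clique.
\begin{itemize}
\item To add $xw$, take an edge $uv$ of $H$ with $u$ a leaf and map $u\mapsto x$, $v\mapsto w$.
\item Embed $H-\{u,v\}$ in the remaining $f(H)$ clique vertices, avoiding the single clique edge whose colour equals $c(xw)$. This succeeds by the same Turán count, which has plenty of slack.
\item The edges from $v$'s other neighbours to $w$ are clique edges, so they are present.
\end{itemize}
Afterwards, every pair of outside vertices is handled by the key step.

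\emph{Case (ii).} Take a rainbow $K_{f(H)+\delta}$ and join each outside vertex $x$ to a fixed $\delta$-set $A_x$ in the clique. This gives exactly the stated edge count. To add $xw$ with $w$ in the clique and $w\notin A_x$:
\begin{itemize}
\item choose $uv\in E(H)$ with $d_H(u)=\delta$, and map $u\mapsto x$, $v\mapsto w$;
\item send the other $\delta-1$ neighbours of $u$ into $A_x$;
\item embed the rest of $H$ in the clique, discarding the $O(\delta)$ clique edges whose colours clash with the edges at $x$.
\end{itemize}
Once every $x$ is complete to the clique, the outside pairs are handled by the key step.

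The main obstacle is the case (ii) clique-to-outside step. The images of $u$'s neighbours are forced into $A_x$, so one must check that the room left after removing $A_x\cup\{w\}$ and the clashing colours still exceeds the Turán threshold. The fallback is to first enlarge $x$'s neighbourhood greedily, one clique vertex at a time, so that later steps have more freedom; the extra $\delta$ vertices in the clique size $f(H)+\delta$ are exactly the slack that makes this work.
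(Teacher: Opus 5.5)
The paper does not prove this theorem; it quotes it from Li, Ma and Xie, so there is no in-paper argument to compare with, and I assess your proposal directly. Your lower bound in (ii) is correct. You colour the first added edge at a vertex of degree $\delta(G_H)-1$ like an $F$-edge at that vertex, and a vertex of degree at most $\delta(G_H)-2$ can never host an edge of $H$. Your constructions also give exactly the stated edge counts: a rainbow clique plus isolated vertices in (i), and a rainbow $K_{f(H)+\delta}$ plus outside vertices attached to $\delta$ clique vertices in (ii).

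\textbf{The gap: the outside--outside step in (ii).} Your key step assumes that the $2N+1$ edges $xy$, $xS$, $yS$ are all added edges, hence pairwise distinctly coloured. In (ii), however, the edges from $x$ to $A_x$ are original edges of $F$. The adversary may set $c(xs)=c^*(xa)$ with $s\in S$ and $a\in A_x$. You do not control which vertices of $S$ host the neighbours of the image of $u$, so you cannot avoid using both edges; hence $S$ must be disjoint from $A_x\cup A_y$.

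If the $\delta$-sets depend on $x$, as your notation $A_x$ suggests, then $|V(K)\setminus(A_x\cup A_y)|$ can be as small as $f(H)-\delta\le f(H)-2$. For such sizes the definition of $f(H)$ gives you no Tur\'an bound, since it only covers $N\in\{f(H)-1,f(H)\}$. The fix is to use one common $\delta$-set $A$ for every outside vertex and to run the key step with $S=V(K)\setminus A$, so $N=f(H)$. That is exactly what the extra $\delta$ clique vertices are for. Your closing paragraph, which locates the slack in the clique-to-outside step, misdiagnoses this.

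\textbf{The cross steps need no Tur\'an argument.} The clique is complete, and $c(xw)$ equals at most one colour of $F$. The $F$-edges $xa$ cannot clash with clique edges, because $F$ is rainbow.
\begin{itemize}
\item In (ii), if $c(xw)=c^*(xa)$ with $a\in A$, make $a$ the unused vertex of $A$. If $c(xw)$ is the colour of a clique edge, either leave its endpoint in $A$ unused, or avoid an endpoint outside $A\cup\{w\}$ when placing the remaining vertices of $H$.
\item This only needs $f(H)\ge |V(H)|-1$. That follows from the definition: $K_{f(H)-1}$ must contain a member of $\mathscr{H}$, which has $|V(H)|-2$ vertices.
\item The same direct embedding is what actually justifies your cross step in (i). There ``the same Tur\'an count'' is the wrong tool. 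It only yields some $H-\{u',v'\}$ for an arbitrary edge $u'v'$, not the leaf edge $uv$ you need. Moreover, the clashing clique edge may be incident with $w$, so it is not an edge of the remaining $f(H)$ vertices at all.
\end{itemize}

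\textbf{An off-by-one.} The definition guarantees the bound for $N\in\{f(H)-1,f(H)\}$, not $\{f(H),f(H)+1\}$. Using $N=f(H)$ throughout avoids any monotonicity argument: take an $f(H)$-subset of the clique in (i), and $V(K)\setminus A$ in (ii).
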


Motivated by their result, we  investigate the exact weak rainbow saturation number for several fundamental graph classes. Saturation and rainbow variants have already been studied for several families
of trees and cycles; see, for example, \cite{CHEN2026,FFGJ2009,lane2024,LANE2026,xu2025s}.    Of particular relevance, paths and stars attain, respectively, the minimum and maximum ordinary weak saturation numbers among trees of a given order \cite{borowiecki2002weakly,faudree2013weak}. They are also extremal with respect to the number of leaves: an $\ell$-vertex path has two leaves, the minimum possible, whereas an $\ell$-vertex star has $\ell-1$ leaves, the maximum possible. We therefore begin by determining their weak rainbow saturation numbers exactly.

Our first result determines the weak rainbow saturation number of
paths.
\begin{theorem}\label{main:path}
For $n\ge \ell+1\ge 31$, it holds that $\s(n,P_\ell)= \ell+1$.     
\end{theorem}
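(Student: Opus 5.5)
We have to prove an upper bound, via a construction with $\ell+1$ edges, and a matching lower bound $\s(n,P_\ell)\ge \ell+1$.

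\textbf{Upper bound.} I would take $F$ to be a rainbow-colored (coloring $c^*$) graph consisting of a cycle $C_{\ell}$ plus one pendant edge, or $\ell+1$ edges forming a path $P_{\ell+2}$, together with isolated vertices. The choice between these will be made by checking which one allows the first added edges to close rainbow paths. The key point is that whenever a new edge $e$ is added, its color is distinct from all other added edges but might coincide with a color of $F$. So $e$ must lie in at least two "disjoint-color" copies of $P_\ell$, in the sense that for every color $\alpha$ some copy avoids $\alpha$ on its old edges. A pendant path of $\ell$ edges gives essentially $\ell+1$ colors of freedom: if $e$ is attached at an endpoint of a long path with $\ell$ distinct $F$-colors, we can drop whichever edge shares $e$'s color only if that edge is at the far end.

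The ordering I plan is as follows. First, saturate within $V(G_F)$ by adding chords in an order where each chord $uv$ has a rainbow path of length $\ell-2$ starting at $u$ or $v$ in two color-disjoint ways. Then, for every isolated vertex $w$, add an edge from $w$ to a vertex of high "path-richness", so that $w$ becomes an endpoint of an $\ell$-vertex path in many ways. Then add all remaining edges, each edge $xy$ being the end of paths that use many already-added, pairwise distinctly colored edges. Once the host graph contains a large clique of added edges, which is rainbow among themselves, only $F$-colors can collide, and these can be avoided. The condition $\ell\ge 30$ presumably gives the slack needed in this counting.

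\textbf{Lower bound.} Suppose $F$ has at most $\ell$ edges. Consider the first added edge $e_1$ and give it a color equal to some color of $F$ to block a copy. Then $e_1$ together with $F$ must contain a rainbow $P_\ell$ through $e_1$, and this requires $\ell-2$ edges of $F$ of distinct colors forming a path with $e_1$. More carefully, for every choice of the color of $e_1$ there must exist such a path. If $F$ has at most $\ell$ edges, then at most $2$ edges of $F$ lie outside any fixed such path. I would argue that coloring $e_1$ with a color appearing on a "cut" edge of all these paths kills every copy. This uses the fact that the possible $P_{\ell-1}$'s in $F$ ending at an endpoint of $e_1$, with so few spare edges, share a common edge whose color class is hit. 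Also $F$ must be able to reach a vertex and propagate, so $F$ with exactly $\ell$ edges forming, say, $P_{\ell+1}$ or $C_\ell$ still fails. This requires a case analysis on the structure of $G_F$ when $e(F)\le \ell$ (components, cycles, branching), showing that the intersection of all candidate $\ell-2$-edge paths in $F\cup\{e_1\}$ minus $e_1$ is nonempty for every possible $e_1$. I expect this case analysis to be the main obstacle, including handling $F$ whose colors are not rainbow, since repeated colors only make things harder and we may reduce to the rainbow case.
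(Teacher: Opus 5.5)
\paragraph{Lower bound.} This half has a genuine gap. You plan to show that for every possible first edge $e_1$, all $P_\ell$'s through $e_1$ in $F+e_1$ share an old edge, so that one color on $e_1$ kills them all. That is false for several graphs that must be excluded, because these graphs have \emph{privileged} edges. Take $F=P_{\ell+1}=v_1\cdots v_{\ell+1}$, which has $\ell$ edges. Then $F+v_1v_{\ell+1}$ is a $C_{\ell+1}$ whose other edges are all old. If $c(v_1v_{\ell+1})=c^*(v_jv_{j+1})$, deleting an endpoint of $v_jv_{j+1}$ other than $v_1,v_{\ell+1}$ leaves a rainbow $P_\ell$ through $v_1v_{\ell+1}$. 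The same happens for $v_1v_\ell$ and $v_2v_{\ell+1}$ in this graph, for $v_1v_\ell$ when $F=P_\ell$, and when $F$ is $P_\ell$ plus a chord at $v_\ell$ or plus a pendant edge.

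For these graphs the obstruction appears only at the first non-privileged edge ($e_2$, $e_3$ or $e_4$). To exhibit it you must also color the earlier new edges adversarially, since otherwise rainbow paths simply route through them. This is the missing idea. The paper moves all privileged edges to the front (Proposition~\ref{prop:proA}) and then uses Lemma~\ref{lemma:edgenum}. That lemma sets $c(e_j)=c^*(f_j)$ for distinct old edges $f_j$ outside a small set $S$, so each earlier new edge can at best substitute for its partner. Everything then reduces to showing that no path through $e_i$ uses more than $|S|-2+t$ edges of $S$. Your single-edge criterion is only the case $i=1$. The case analysis cannot be carried out without the general version; Lemma~\ref{lemma:subedge} for $P_{\ell+1}$, for example, has to go up to $e_4$.

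\paragraph{Upper bound.} This half is not established: you never commit to a graph or check an ordering. As far as I can check, both of your candidates fail for large $\ell$.
\begin{itemize}
\item In $C_\ell$ plus a pendant edge $uv_1$, the only privileged edges are $uv_2,uv_3,uv_{\ell-1},uv_\ell$.
\item In $P_{\ell+2}$, they are the six edges $v_av_b$ with $b-a\ge \ell-1$.
\end{itemize}
In both cases all of them sit near one spot, leaving an untouched run of at least $\ell-5$ consecutive degree-$2$ vertices. An $\ell$-vertex path misses at most three vertices of $G_F$. So for the next edge $e$, the adversary can color $e$ and the privileged edges so that every rainbow $\ell$-vertex path through $e$ is forced to traverse run edges carrying the privileged edges' colors. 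For instance, if $e$ avoids the run, give $e$ the color of a middle run edge; then both halves of the run become pendant end-segments of any such path. The privileged edges are then useless, and in $F+e$ alone the edge $e$ is blocked.

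A valid construction must therefore leave no long degree-$2$ run untouched by privileged edges. The paper's $C_{\ell+1}\cup(n-\ell-1)K_1$ achieves this:
\begin{itemize}
\item Every $v_iv_{i+2}$ closes a $C_\ell$ of old edges, so it is privileged.
\item Each $v_iv_{i+3}$ then lies on a $P_\ell$ made only of earlier new edges, which is automatically rainbow.
\item After these two rounds the new edges alone contain a cycle of order at least $\ell$. This handles all remaining chords, the edges to isolated vertices, and the edges among isolated vertices.
\end{itemize}
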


Our second result determines the weak rainbow saturation number of
stars and shows that the corresponding extremal graphs are not
unique.

\begin{theorem}\label{main:star}
    Let $n$ and $\ell$ be integers with $n\ge 3\ell^2$ and  $\ell\ge 6$. Then $\s(n,S_{\ell})=\binom{\ell}{2}-1$. Furthermore, the extremal graphs are not unique. 
\end{theorem}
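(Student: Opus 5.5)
We treat the upper bound and the lower bound separately. Throughout, the added edges receive pairwise distinct colors, but a color on an added edge may coincide with a color already used in $F$.

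\textbf{Upper bound (construction).} Take a clique $K$ on vertices $v_0,v_1,\dots,v_{\ell-2}$, with $\binom{\ell-1}{2}$ edges. Add a vertex $w$ joined to $v_1,\dots,v_{\ell-2}$; this gives $\ell-2$ more edges, so $\binom{\ell-1}{2}+\ell-2=\binom{\ell}{2}-1$ edges in total. Give all edges distinct colors (the coloring $c^*$). Each $v_i$ with $i\ge1$ then has degree $\ell-1$ with $\ell-1$ distinct colors. The ordering proceeds in three phases.
\begin{enumerate}[label=(\roman*)]
\item Add all edges $v_iy$ with $i\ge 1$ and $y$ outside $K\cup\{w\}$. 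Whatever color $v_iy$ receives, it can equal at most one color at $v_i$. So at least $\ell-2$ of the other edges at $v_i$ have pairwise distinct colors, all different from $c(v_iy)$. Together with $v_iy$ they form a rainbow $S_\ell$ centered at $v_i$.
\item Now every outside vertex $y$ has $\ell-2$ added edges, with pairwise distinct colors. Any further edge $yz$ or $yv_0$ gets a color different from all of these, since added colors are pairwise distinct. Hence it completes a rainbow star centered at $y$. This handles every edge meeting an outside vertex.
\item The remaining edges are $wv_0$ and $w$ to the outside vertices. The outside vertices were treated in phase (ii). For $wv_0$, use $v_0$, which by now has many added edges of distinct colors.
\end{enumerate}
The order of phases must be checked so that every added edge has a valid center at the moment it is added. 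For non-uniqueness, I would exhibit a second graph, for instance moving some edges of $w$ to a different vertex or replacing the clique structure, and check that it works the same way.

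\textbf{Lower bound.} Call a vertex $u$ \emph{active} at a given time if every new edge $uv$ can be added with $u$ as the center of a rainbow $S_\ell$, whatever its color. An adversary may give $uv$ a color already present on an $F$-edge at $u$. So, writing $a(u)$ for the number of added edges at $u$ and $k(u)$ for the number of distinct $F$-colors at $u$, activity requires
\begin{itemize}
\item $k(u)+a(u)\ge \ell-1$ if $u$ has some $F$-edge;
\item $a(u)\ge \ell-2$ if $u$ has no $F$-edge.
\end{itemize}
Every added edge needs an active endpoint. Consequently, the vertices that never become active must be pairwise adjacent in $F$.

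Order the vertices $u_1,u_2,\dots$ by the time at which they become active. Before $u_j$ activates, its added edges can only go to $u_1,\dots,u_{j-1}$. Hence $a(u_j)\le j-1$ at activation, and $d_F(u_j)\ge k(u_j)\ge \ell-j$ for $j\le \ell-1$. In particular $u_1$ has $F$-degree at least $\ell-1$, since nothing can be added before the first activation. Summing gives $\sum_{j\le \ell-1}(\ell-j)=\binom{\ell}{2}$. The task is then to show that the double counting of $F$-edges with both ends among $u_1,\dots,u_{\ell-1}$ loses at most one edge overall.

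\textbf{Main obstacle.} I expect this last counting step to be the hard part. The plan is to charge edges inside $\{u_1,\dots,u_{\ell-1}\}$ more carefully: an $F$-edge $u_iu_j$ with $i<j$ counts toward $u_j$'s requirement but is not an added edge from an earlier active vertex. So $u_j$ actually needs more $F$-edges, or it uses $F$-edges going outside. I would try to show by induction on $j$ that the first $j$ active vertices span at least $\sum_{i\le j}(\ell-i)-[\,j\ge 2\,]$ $F$-edges incident to them. I would also treat separately the case $k(u)<d_F(u)$, that is, repeated colors at a vertex, which only helps. The hypotheses $n\ge 3\ell^2$ and $\ell\ge 6$ should be used to guarantee that enough vertices have no $F$-edges, so that the extremal configuration is forced into this form.
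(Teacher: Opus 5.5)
Your upper bound is correct. Your graph is $K_\ell$ minus the edge $v_0w$, which is the member of the paper's family with clique $X=\{v_1,\dots,v_{\ell-2}\}$ and $Y=\{v_0,w\}$, and your three phases form a valid ordering. Non-uniqueness is only gestured at, but it is easy to finish. Split $w$ into two nonadjacent vertices that share out its neighbourhood $\{v_1,\dots,v_{\ell-2}\}$. Phases (i) and (ii) go through verbatim, since every $F$-isolated vertex still receives $\ell-2$ added edges from $X$. Afterwards every vertex of $Y$ has at least $n-\ell-1\ge \ell-2$ added edges, so the edges inside $Y$ can also be added.

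The lower bound has a genuine gap, and it is not only the counting step you flag. First, your activity criterion mis-models the adversary. The coloring $c$ need only be injective on $E(\bar F)$, so edges already added at $u$ may receive colors of $F$-edges at $u$. The number of colors at $u$ can then drop to $\max\{d_F(u),a(u)+1\}$, so $u$ is a safe centre exactly when $d_F(u)\ge \ell-1$ or $a(u)\ge \ell-2$, not when $k(u)+a(u)\ge\ell-1$. As a necessary condition your inequality is still valid, so this alone is not fatal.

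The fatal step is the claim that every added edge needs an active endpoint, which is false. Suppose $d_F(u)=d_F(v)=\ell-2$. Destroying the star at $u$ forces $c(uv)\in c^*(\partial_F(u))$, and destroying the star at $v$ forces $c(uv)\in c^*(\partial_F(v))$. These sets are disjoint because $F$ is rainbow, so $uv$ is privileged even though neither endpoint is active; this is Step~2 in the paper. Hence vertices of $F$-degree $\ell-2$ can feed each other added edges before anything is active. So the first added edge need not meet a vertex of degree $\ge \ell-1$, the bound $a(u_j)\le j-1$ fails, and never-active vertices need not be pairwise adjacent.

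Controlling these vertices is the heart of the paper's proof. Let $p$ and $q$ be the numbers of vertices of degree $\ge\ell-1$ and $\ge\ell-2$. The paper first shows $p\le \ell-3$ and $q\le \ell$ by counting. It then observes that the first vertex outside the top $q$ to become a centre receives added edges only from the $p$ high-degree vertices and from the set $I$ of degree-$(\ell-2)$ vertices activated via Step~2 edges. This gives $|I|\ge \ell-2-p$ and $q\in\{\ell-1,\ell\}$, and the paper finishes by counting edges in these two cases.

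Even within your own model, the decisive count is left open. There it would close by charging each $F$-edge $u_iu_j$ with $i<j$ only to $u_i$, since for $u_j$ it plays the role of an added edge. Then $u_j$ needs $\ell-j$ $F$-neighbours outside $\{u_1,\dots,u_{j-1}\}$, and summing over $j\le \ell-2$ gives $\binom{\ell}{2}-1$. But that argument rests on the false claim above, so the degree-$(\ell-2)$ case must be handled separately.
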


We next turn to cycles.  Li, Ma, and Xie \cite{Li2025} raised the following question about cycles.

\begin{question}[\cite{Li2025}]
For any integer $\ell \ge 4$, does there exist a constant $c_\ell$ such that 
\[
\operatorname{rwsat}(n, C_\ell) = \frac{3}{2}n + c_\ell?
\]
\end{question}
We answer this question  negatively for every $\ell\ge 4$. 
More precisely, by giving an explicit construction, we obtain an 
upper bound whose leading coefficient is strictly smaller than $\frac32$ for large enough $n$.

\begin{theorem}\label{main:cycle}
 For $\ell\ge 4$ and $n\ge 10\ell$, we have that 
    $$
   n+\frac{n}{6\ell}\le \s(n,C_\ell)\le \frac{n-4\ell}{\ell-1}  \ell +\binom{6\ell}{2}
    $$
\end{theorem}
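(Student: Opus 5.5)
Theorem~\ref{main:cycle} has two parts: a lower bound and an explicit construction for the upper bound.

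\emph{Lower bound.} Let $F$ be weakly $C_\ell$-rainbow saturated on $n$ vertices. First, $F$ has no isolated vertex and every vertex has degree at least $1$ in $F$. Indeed, the first added edge $e$ incident to a vertex $v$ must lie in a cycle through $v$, and the cycle's other edge at $v$ must already be present. The same argument sharpens this. Let $v\in R_1$ and let $e$ be the first added edge at $v$. Then the cycle through $e$ uses $e_v$ and $e$, and the rainbow requirement is for every coloring $c$ of the added edges, all distinct from each other. So we need the forcing structure to work for adversarial colors, and the colors of $E(F)$ under $c^*$ are fixed. I would prove that a degree-$1$ vertex $v$ forces its neighbour $u$ to have large degree, or forces the structure near $v$ to be "expensive". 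Then I would use a discharging argument: each vertex of $R_1$ receives charge $1/(6\ell)$ (or so) from vertices of degree $\ge 3$ within bounded distance, giving $e(F)=\frac12\sum d(v)\ge n+\frac{n}{6\ell}$.

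The concrete obstacle here is showing that vertices of degree $1$ and $2$ cannot dominate. A long path of degree-$2$ vertices cannot occur, since the first edge added at an interior vertex forces a cycle of length exactly $\ell$ through a segment. I expect to show that every maximal path of vertices of degree at most $2$ has at most about $3\ell$ vertices and must end at vertices of degree at least $3$. Counting then gives the $\frac{n}{6\ell}$ surplus. This is the step I expect to be hardest.

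\emph{Upper bound.} Take a core clique $K$ on $6\ell$ vertices, rainbow colored, contributing $\binom{6\ell}{2}$ edges. Partition the remaining $n-6\ell$ vertices into blocks of $\ell-1$ vertices; the leftover is absorbed into the core, which explains the $4\ell$ slack. Attach each block as a path $P$ on $\ell-1$ vertices, $\ell-2$ edges, plus two edges joining its endpoints to two distinct core vertices $x,y$. This gives $\ell$ edges per block, so about $\frac{n-4\ell}{\ell-1}\ell$ edges in total. Each block is then an $\ell+1$-cycle with a core chord $xy$.

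The ordering is as follows.
\begin{enumerate}[label=(\arabic*)]
\item For each block, add edges between block vertices and core vertices one at a time. A new edge $zw$ with $w\in K$ closes a cycle consisting of a segment of $P$ to $x$ or $y$ and then a path inside $K$ of adjustable length. The core is large, rainbow under $c^*$, and its colors are distinct from the new colors, so we can always choose core vertices avoiding the one new color. Previously added edges carry distinct colors, so the cycle is rainbow.
\item Add edges inside a block, using two already-added block–core edges and a core path.
\item Add edges between different blocks, routing through the core.
\end{enumerate}
In every case we fix the cycle length to exactly $\ell$ by choosing the core path length, which is possible since $|K|=6\ell$ leaves many unused vertices. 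The only care needed is that added edges used in the cycle have pairwise distinct colors, which holds by assumption. So the main verification is bookkeeping of lengths, which I would do case by case.
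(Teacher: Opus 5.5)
\textbf{The lower bound has a genuine gap.} You come close to, but miss, the basic fact $R_1=\emptyset$. If $d(v)=1$ and $e$ is the first added edge at $v$, set $c(e)=c^*(e_v)$. Every cycle through $e$ uses $e_v$ as its other edge at $v$, so no rainbow $C_\ell$ contains $e$. Hence $\delta(F)\ge 2$, and a similar first-edge argument shows $F$ is connected and not a cycle. Your discharging toward degree-$1$ vertices is therefore unnecessary. It is also unworkable as stated: a degree-$1$ vertex topped up by about $\frac{1}{6\ell}$ is nowhere near the required average degree $2+\frac{1}{3\ell}$.

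More seriously, your target thread bound (about $3\ell$ consecutive degree-$2$ vertices) is too weak for the stated constant. Contract the degree-$2$ vertices to get a multigraph $F^*$, and let $T$ bound the number of interior vertices of a thread. Then $e(F)=(n-n_{\ge3})+e(F^*)$ and $e(F^*)\ge\max\{\tfrac32 n_{\ge3},\,(n-n_{\ge3})/T\}$. Optimizing over $n_{\ge3}$ gives only $e(F)-n\ge n/(3T+2)$, which is about $n/(9\ell)$ for $T\approx 3\ell$. You need $T\le 2\ell-1$, and the tool is again color copying. Suppose $w_0w_1\cdots w_tw_{t+1}$ has $t\ge 2\ell$ interior vertices of degree $2$. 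Let $e$ be the first added edge at some $w_i$ with $i\le (t+1)/2$, whose other endpoint is not interior to the thread. Set $c(e)=c^*(w_{i-1}w_i)$. A cycle through $e$ must then run along the thread from $w_i$ to $w_{t+1}$, which already takes at least $\ell+1$ edges. If instead $e=w_iw_j$ is a chord inside the thread, the two cycles it closes cannot both have length $\ell$ when $t\ge 2\ell$. Copying the color of a thread edge on the relevant side kills the one of length $\ell$. Neither this argument nor the counting appears in your sketch, so the lower bound is not established.

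\textbf{The upper-bound construction is a valid alternative, but your verification fails at the ends of each block.} The paper glues copies of a $(2\ell-2)$-vertex, $2\ell$-edge gadget to $K_q$ at a single vertex $v$; the gadget contains a triangle $vw_1w_2$ and a long thread back to $v$. Your construction instead hangs paths on $\ell-1$ new vertices between two distinct core vertices $x,y$. It has the same ratio $\frac{\ell}{\ell-1}$, can be made to work, and is arguably simpler.

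However, new colors are \emph{not} distinct from old ones: $c$ may copy any $c^*$ color, including thread colors. Take the path $xp_1\cdots p_{\ell-1}y$ and $w\in K\setminus\{x,y\}$. Coloring $wp_1$ with $c^*(xp_1)$ forces your ``segment to $x$ or $y$ plus core path'' to use $p_1\cdots p_{\ell-1}y$. Together with $wp_1$ this already has $\ell$ edges, so no core path fits. You must order the additions as follows.
\begin{enumerate}
\item First add $wp_i$ for $2\le i\le \ell-2$. Each side needs a core path of length at least $1$, and one copied color kills at most one side.
\item Then add $yp_1$ and $xp_{\ell-1}$, using either the thread cycle or $yp_1x$ plus an $(x,y)$-core path of length $\ell-2$.
\item Then add $wp_1$, routed through $yp_1$ when $c(wp_1)=c^*(xp_1)$, and continue similarly.
\end{enumerate}
Finally, a $6\ell$-vertex core with the leftover absorbed overshoots $\frac{n-4\ell}{\ell-1}\ell+\binom{6\ell}{2}$ whenever the leftover is nonzero. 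Use $\lfloor\frac{n-4\ell}{\ell-1}\rfloor$ blocks instead, so the core has between $4\ell$ and $5\ell-2$ vertices.
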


We emphasize here that it was shown in \cite{Li2025} (Lemma 2.4) that any weakly $H$-rainbow saturated graph can be recolored to a rainbow one without losing the weak saturation property. Therefore, throughout this paper we may assume, without loss of generality, that the edge-coloring of 
$F$ is rainbow. Moreover, we denote by $c$ an injective mapping from $E(\bar{F})$ to the set of all  possible colors satisfying that  $c(e)\neq c(e')$ for distinct $e,e'\in E(\bar{F})$. Let $X\subseteq E(\bar{F})$, by an \emph{explicit assignment} of $c$ on $X$, we mean that  $c(e)$ is fixed for each $e$ in $X$.

The paper is organized as follows. In \Cref{sec:path}, \Cref{sec:star} and \Cref{sec:cycle}, we provide the proofs of \Cref{main:path}, \Cref{main:star} and \Cref{main:cycle}, respectively.

\section{Paths}\label{sec:path}
In this section, we focus on paths and prove that \Cref{main:path} holds.  

\begin{THMMAIN} 
For $n\ge \ell+1\ge 31$, it holds that $\s(n,P_\ell)= \ell+1$.     
\end{THMMAIN}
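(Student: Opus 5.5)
Both bounds will be proved separately: the lower bound $\s(n,P_\ell)\ge \ell+1$ by analysing the first added edge(s) of any weakly $P_\ell$-rainbow saturated ordering, and the upper bound by an explicit graph with $\ell+1$ edges together with an explicit ordering.

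\textbf{Lower bound.} Let $F$ be weakly $P_\ell$-rainbow saturated with rainbow coloring $c^*$, and let $e_1,e_2,\dots$ be a saturating ordering. The first edge $e_1$ must lie in a rainbow $P_\ell$ of $F+e_1$. Because $c(e_1)$ is arbitrary, an adversary can give $e_1$ the color of any edge of $F$. Hence, for every edge $f\in E(F)$, there must be a copy of $P_\ell$ through $e_1$ in $F+e_1$ that avoids $f$. Already this needs $F$ to contain $\ell-2$ edges forming a path structure, plus redundancy, so $e(F)\ge \ell-1$. To gain two more edges I will argue as follows.
\begin{itemize}
\item If $e(F)\le \ell$, then $F+e_1$ has at most $\ell+1$ edges. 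Containing, for every $f$, a $P_\ell$ through $e_1$ that avoids $f$ forces $F+e_1$ to be essentially a cycle $C_\ell$, or a $P_\ell$ with one extra edge.
\item Next I look at the edges that must be added later, for instance an edge from an isolated vertex $v$ (which exists since $n\ge \ell+1$). Such an edge $e_i$ is a pendant addition.
\item Its $P_\ell$ must then use $\ell-2$ edges of $F+\{e_1,\dots,e_{i-1}\}$. The adversary may color the earlier added edges to coincide with edges of $F$.
\item I will show that $c$ can be chosen injectively on $E(\bar F)$ so that some step fails. The natural choice is to give the added edges colors that copy the colors of $F$'s edges, which kills the rainbow paths whenever the structure is too sparse.
\end{itemize}
This case analysis, on small configurations with $\ell-1$ or $\ell$ edges, yields the lower bound.

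\textbf{Upper bound.} I would take $F$ to be a $P_\ell$ (or a $C_\ell$) with two extra chords creating enough alternative long paths, for example a cycle $C_{\ell}$ plus one pendant or chord, giving $\ell+1$ edges, with the remaining vertices isolated. The ordering proceeds in three phases.
\begin{itemize}
\item First, add edges inside $V(F)$ until the clique on roughly $\ell+1$ vertices is complete. At each step I must exhibit, for every coloring, a rainbow $P_\ell$ through the new edge. The $\ell+1$ rainbow edges of $F$ and the injectivity on $\bar F$ mean the new edge's color collides with at most one edge of $F$ plus possibly earlier added edges. Since a clique-like structure has many disjoint choices of $P_\ell$, a pigeonhole argument works: among many edge-disjoint (apart from $e_i$) candidate paths, the total set of colors is too large for all of them to be non-rainbow.
\item Then, vertices outside are attached one at a time, each edge to a vertex of the current big clique. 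Once a clique $K_m$ with $m\ge \ell+1$ is present, any new pendant edge $uv$ extends to $P_\ell$ avoiding colors: at most $\ell-1$ path edges are needed, and the rainbow requirement only forbids conflicts among them. A greedy or Ramsey-type counting in a properly colored-enough clique, using that each color class on added edges is a single edge, shows a rainbow path exists.
\item Finally, edges among outside vertices are added similarly.
\end{itemize}

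The main obstacle is the very first phase, where the graph is still sparse (only $\ell+1$ edges) and each added edge can copy a color of $F$. I would design $F$ so that each early added edge creates two paths of length $\ell-1$ through it that share as few $F$-edges as possible. Then one color collision leaves one path rainbow, and I would check carefully that collisions among the added edges themselves, which are all distinct colors, cannot interfere. The condition $\ell\ge 30$ presumably is used in the counting that guarantees rainbow paths in the bootstrap phase, and I would rely on it there.
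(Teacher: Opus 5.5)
Your outline has the same skeleton as the paper. The lower bound starts from the trivial bound $\ell-1$ given by the first edge and then excludes $e(F)\in\{\ell-1,\ell\}$; the upper bound uses an explicit $(\ell+1)$-edge graph grown into a clique and then extended. But the part that carries the theorem, excluding $e(F)=\ell-1$ and $e(F)=\ell$, is only asserted, and the route you indicate would not reach it.

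The first added edge cannot yield a contradiction. For $F=P_{\ell+1}=v_1\cdots v_{\ell+1}$ the edges $v_1v_{\ell+1}$, $v_1v_\ell$ and $v_2v_{\ell+1}$ are all privileged, since each closes a cycle of order $\ell$ or $\ell+1$ whose other edges are original. Likewise $P_\ell\cup K_2$, $P_\ell$ plus a pendant edge, and $P_\ell$ plus a chord $v_iv_\ell$ all admit privileged first edges. So your description of $F+e_1$ is incomplete. You also cannot assume the failure happens at the first edge meeting an isolated vertex, since by then the graph need no longer be sparse.

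What the paper proves is that the process stalls at step $2$, $3$ or $4$. That needs two ingredients missing from your plan:
\begin{itemize}
\item the normalization of Proposition~\ref{prop:proA}, which moves privileged edges to the front and so pins down $e_1,e_2,e_3$ in each configuration;
\item the counting criterion of Lemma~\ref{lemma:edgenum}: suppose $e(F)=\ell-t$, $S\subseteq E(F)$, the colors $c(e_1),\dots,c(e_i)$ are copied from distinct edges of $E(F)\setminus S$, and every rainbow $P_\ell$ through $e_i$ uses at most $|S|-2+t$ edges of $S$; then step $i$ fails.
\end{itemize}
Combined with structural reductions ($\Delta(G_F)\le 3$, and the lemma on connected graphs with degree sequence $(1,2,\dots,2,3)$), this leaves a short list of shapes. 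Each is killed by explicit color assignments, and this is where the largeness of $\ell$ is really used, not in the upper bound as you guessed.

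For the upper bound you leave the decisive early phase open, and the pigeonhole you propose does not apply there. With $\ell+1$ edges on about $\ell$ vertices there is no room for several edge-disjoint copies of $P_\ell$. The danger is also not collisions among added edges, which injectivity excludes. It is added edges copying colors of $F$, so that a long cycle mixing old and new edges can carry two disjoint collisions that no single deletion repairs.

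The paper's choice $C_{\ell+1}\cup(n-\ell-1)K_1$ is designed to decouple from the colors of $F$:
\begin{itemize}
\item each $v_iv_{i+2}$ closes a $C_\ell$ whose other $\ell-1$ edges are original, so deleting the single possibly colliding edge leaves a rainbow $P_\ell$;
\item each $v_iv_{i+3}$ then lies on a $P_\ell$ made of the pairwise distinctly colored edges $v_jv_{j+2}$;
\item the added edges then contain a rainbow cycle $C'$ of order $\ell$ or $\ell+1$;
\item from then on every remaining edge (longer chords, edges to isolated vertices, edges among them) is handled using newly added edges only.
\end{itemize}
Your candidates, $C_\ell$ plus a chord or a pendant edge, are not excluded, since the paper leaves uniqueness of the extremal graph open. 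But you would have to exhibit such a decoupling ordering and check every step.
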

 The proof consists of the following parts. In \Cref{lem:ulbdd}, we show that  $\ell-1\le\s(n,P_\ell)\le \ell+1$ where $\ell\ge 30$.  We emphasize that the constant $30$ is not intended 
to be optimal; it is merely a convenient absolute threshold arising 
from our argument and could likely be reduced by refining the estimates.   Then  it remains to exclude $\ell-1$ and $\ell$. To exclude $\ell$ and $\ell-1$, suppose that there is a weakly $P_\ell$-rainbow saturated graph $F$ on $n$ vertices and $\ell-t$ edges where $t\in \{0,1\}$. Then the subgraph $ G_F$ of $F$ consists of non-isolated vertices are important. Briefly, taking the value of $t$  and the connectedness of $ G_F$ into consideration, there are four possibilities  for $ G_F$ in total. 
 
 We first introduce some tools in \Cref{prop:proA}, \Cref{lemma:edgenum}, and \Cref{lemma:strseq}. After that, we 
 consider first the case that $ e(G_F)=\ell-1$ in \Cref{lemma:l1e} with $ G_F$ connected and \Cref{lem:ct=1} with $ G_F$ disconnected.  Before addressing  the remaining  two cases that $t=0$ and $ G_F$ is connected or not, we first exclude three exact structures of $ G_F$ 
in the \Cref{lemma:subk4+}, \Cref{lemma:subedge} and  \Cref{lemma:substar}. Then we exclude the remaining cases that $ e(G_F)=\ell$ and $ G_F$ is connected in \Cref{lemma:conn2} while $ G_F$ is disconnected in \Cref{lem:disct=1}. 
\medskip

Now, we present a construction establishing the upper bound and an observation yielding the lower bound.

\begin{lemma}
    \label{lem:ulbdd}
Let $\ell\ge 30$ and $n\geq\ell+1$. Then $\ell-1\le\s(n,P_\ell)\le \ell+1$. Furthermore, $C_{\ell+1}\cup(n-(\ell+1))K_1$ is weakly $P_\ell$-rainbow saturated. 
\end{lemma}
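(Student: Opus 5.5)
The lower bound comes from looking at the first added edge, and the upper bound from checking the stated construction against an adversarial colouring. Throughout, $F$ is rainbow by Lemma 2.4 of \cite{Li2025}.

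\textbf{Lower bound $\s(n,P_\ell)\ge \ell-1$.} Let $F$ be weakly $P_\ell$-rainbow saturated, and let $e_1$ be the first edge in a weakly $P_\ell$-rainbow saturated ordering. The definition only requires the colours of the added edges to be distinct from each other. So $c(e_1)$ may coincide with the colour $c^*(f)$ of any edge $f\in E(F)$; note $F$ has at least one edge, since otherwise no $P_\ell$ through $e_1$ exists. Fix such an $f$ and set $c(e_1)=c^*(f)$. A rainbow copy of $P_\ell$ through $e_1$ in $F+e_1$ then uses $\ell-2$ edges of $F$, none of them equal to $f$. Hence $e(F)-1\ge \ell-2$, which gives the lower bound.

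\textbf{Upper bound.} Let $F=C_{\ell+1}\cup(n-\ell-1)K_1$ with a rainbow colouring $c^*$. Write the cycle as $v_0v_1\cdots v_\ell v_0$ and the remaining vertices as $w_1,\dots,w_{n-\ell-1}$. I would add the missing edges in three phases:
\begin{enumerate}[label=(\roman*)]
\item all chords $v_iv_j$ of the cycle;
\item all edges $v_iw_k$, taking the $w_k$ one by one and, for each $w_k$, all $i$;
\item all edges $w_kw_m$.
\end{enumerate}
The invariant I would use is the following. When $e$ is added, its colour $c(e)$ differs from the colours of all previously added edges. It can clash with at most one colour of $F$, namely the colour of a single cycle edge $g$. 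So it suffices to find, for every possible $g$ (or no $g$ at all), a $P_\ell$ through $e$ that uses only cycle edges other than $g$ together with previously added edges. Any such path is automatically rainbow.

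\emph{Phase (i).} For a chord $v_iv_j$, start at a cycle neighbour of $v_i$ and walk along one arc to $v_i$. Then cross the chord and walk along an arc from $v_j$, taking $\ell-1$ edges in total. A path of this shape omits at least $2$ of the $\ell+1$ cycle edges, and there are several choices (which neighbours, which directions). I would verify by a short case check, using $\ell\ge 30$ to leave room, that for each cycle edge $g$ some choice omits $g$.

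\emph{Phase (ii).} For the first edge $v_iw_k$ at a given $w_k$, take the path $w_k v_i v_{i+1}\cdots v_{i+\ell-2}$ or its mirror image $w_k v_i v_{i-1}\cdots v_{i-\ell+2}$. These two paths together omit enough cycle edges to handle most $g$. If $g$ lies on both, I would use a chord added in phase (i) to reroute around $g$; all chords are already present, so the subgraph on the cycle vertices is complete, which makes this easy. For later edges $v_iw_k$ there is extra flexibility from the previously added edges $v_jw_k$.

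\emph{Phase (iii).} When $w_kw_m$ is added, $w_k$ is already adjacent to all $v_i$, and $K_{\ell+1}$ on the cycle vertices is present. So take $w_m w_k v_{a}\cdots$ continuing inside this clique while avoiding $g$.

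The main obstacle I expect is the bookkeeping in phases (i) and (ii), showing that the single clashing edge $g$ can always be avoided. Everything after the first few chords is routine because the cycle vertices quickly span a complete graph of added edges. The delicate cases are therefore the earliest chords, where only the cycle and few chords are available. I would order the chords by length, adding first those $v_iv_{i+2}$ whose lengths give the most rerouting options, and check those cases explicitly.
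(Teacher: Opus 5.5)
Your lower bound is correct and is the paper's argument: colour the first added edge like an edge $f$ of $F$, so a rainbow $P_\ell$ through it needs $\ell-2$ edges of $F-f$. The upper bound, however, rests on a false invariant. Injectivity of $c$ only makes the added edges pairwise distinct in colour. Nothing prevents an \emph{earlier} added edge $e'$ from having $c(e')=c^*(g')$ for some cycle edge $g'$; this is exactly the adversary exploited in Lemma~\ref{lemma:edgenum}. So a $P_\ell$ that mixes cycle edges with previously added edges is not automatically rainbow just because it avoids the edge $g$ with $c^*(g)=c(e)$. It must also avoid, for every earlier added edge on it, the cycle edge carrying the same colour. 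All the ``rerouting through chords'' in your Phases (i)--(iii) relies on the false version.

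Phase (i) cannot avoid rerouting. Using only the current chord plus cycle edges works for $v_iv_{i+2}$: the chord closes a $C_\ell$ skipping $v_{i+1}$, which you may open at any edge. It fails for every chord $v_iv_j$ of cycle length at least $3$. Deleting the chord from such a $P_\ell$ leaves two disjoint arcs covering $\ell$ of the $\ell+1$ vertices. The three unused cycle edges then form two gaps, one next to $v_i$ and one next to $v_j$, each with at most two edges. Hence every such path contains every cycle edge far from $v_i$ and $v_j$. For example, with $\ell=30$ and $c(v_0v_{10})=c^*(v_{20}v_{21})$, there is no rainbow $P_{30}$ through $v_0v_{10}$ in $C_{31}+v_0v_{10}$, whatever case check you do.

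The missing idea, which the paper uses, is to build the required paths \emph{only from added edges}; these are automatically rainbow. After all $v_iv_{i+2}$ are in, add each $v_iv_{i+3}$ via an explicit path of added chords. For $\ell$ odd and $i=1$, take $v_{\ell+1}v_{\ell-1}\cdots v_6v_4v_1v_3v_5\cdots v_{\ell-2}v_\ell$; an analogous path works for $\ell$ even. The added edges then contain a cycle $C'$ of order $\ell$ or $\ell+1$, e.g.\ $v_1v_3\cdots v_\ell v_2v_4\cdots v_{\ell-1}v_1$ for $\ell$ odd. Every remaining chord, every $v_iw$ and every $ww'$ then lies on a $P_\ell$ made of $C'$ and previously added edges only. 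Your Phases (ii) and (iii) become correct once rephrased this way, since the chords alone form $K_{\ell+1}-C_{\ell+1}$, which is Hamiltonian. As written, though, they inherit the same flaw.
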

\begin{proof}
To see the lower bound,  let $F\in \rw(n,P_\ell)$ and $e\in E(\bar{F})$ be the first edge added to $ F$. Then $F+\{e\}$ has at least two  rainbow $P_\ell$ containing $e$, otherwise, when $c(e)$ takes the color of the only  rainbow $P_\ell$ in $F+ e $, there is no  rainbow $P_\ell $ containing $e$ anymore in $F+ e $. It follows that $e(F)\ge \ell-1$.

To see the upper bound,  we show that $F=C_{\ell+1}\cup(n-(\ell+1))K_1$ is weakly $P_\ell$-rainbow saturated. Write $C_{\ell+1}=v_1v_2\cdots v_{\ell+1}v_1$, where the subscripts are considered modulo $\ell+1$.  
We add edges in $\bar{F}$ to $F$  according to  the following order.
\begin{itemize}
    \item [Step 1.] For each $i\in [\ell+1]$, add $v_iv_{i+2}$ to $ C_{\ell+1} $. \\
    Since $v_iv_{i+2}$ is contained in $C=v_1v_2\cdots v_iv_{i+2}\cdots v_{\ell+1}v_1$ which is of order $\ell$, we always obtain a rainbow $P_\ell$ containing $v_iv_{i+2}$ in $F+v_iv_{i+2}$.
    \item [Step 2.] For  each $i\in [\ell+1]$, add $v_iv_{i+3}$ to  $C_{\ell+1} $. \\
    We only use the newly added edges in Step 1 to obtain a rainbow $P_\ell$, and by definition, it must be rainbow-colored. If $\ell$ is odd, by symmetry, say $i=1$. Then  $$ v_{\ell+1} v_{\ell-1}v_{\ell-3}\cdots v_8v_6v_4v_1v_3v_5v_7\cdots v_{\ell-4}v_{\ell-2}v_{\ell}$$
    is a rainbow   $P_\ell$ which contains $v_1v_4$ while does not contain $v_2$.  
    If $\ell$ is even,  by symmetry, say $i=1$. Then  $$v_3v_5v_7\cdots v_{\ell-1}v_{\ell+1}v_2v_4v_1v_{\ell}v_{\ell-2}\cdots v_{10}v_{8}$$ 
    is a $P_\ell$ which contains $v_1v_4$ while does not contain $v_6$.   
    
    \item [Step 3.] For  each $i\in [\ell+1]$ and $4\le j \le \lfloor \frac{\ell+1}{2} \rfloor $, add $v_iv_{i+j}$ to $C_{\ell+1}$. \\

    Again, by symmetry, assume that $i=1$.     
   Note that if $\ell$ is odd, then after Step 1 and Step 2, $$C'=v_1v_3v_5\cdots v_{\ell-4}v_{\ell-2}v_{\ell}v_2v_4v_6\cdots v_{\ell-5}v_{\ell-3}v_{\ell-1}v_1$$  is a rainbow $C_\ell$ that does not contain $v_{\ell+1}$ and its edges are all newly added. 
   If $\ell$ is even, then after Step 1 and Step 2, $$C'=v_1v_3v_5\cdots v_{\ell- 3}v_{\ell-1}v_{\ell+1}v_2v_4v_6\cdots v_{\ell-4}v_{\ell-2}v_{\ell}v_1$$ is a rainbow  $C_{\ell+1}$  whose  edges are all newly added. 
 The graph $C'+v_1v_j$ contains a rainbow $P_\ell$ containing $v_1v_j$, since all the edges are newly added, $v_1,v_j\in V(C')$, and $|V(C')|\ge \ell$. 

   \item [Step 4.] For  each $i\in [\ell+1]$ and $w\in V(F)\setminus V(C_{\ell+1})$, add $v_iw$ to $F$.
 
  The existence of the cycle $C'$ guarantees a rainbow $P_\ell$ containing $v_iw$. 

 \item [Step 5.] For  each distinct $z,w\in V(F)\setminus V(C_{\ell+1})$, add $zw$ to $F$.

 By Step~4, there is a path of order  $\ell-1$ with one endpoint being $z$ 
   whose edges are all newly added in ‌previous steps.  By adding $zw$ to this path we obtain a rainbow $P_\ell$ containing $zw$.
   
\end{itemize}
Since all the edges have been added in order, the conclusion holds. 
\end{proof}



We say an edge $e\in E(\bar{F})$ is \emph{privileged} if we color $e$ with an arbitrary color $c(e)$, there is always a rainbow $P_\ell$ in $F+ e $ that contains $e$. For example, if $F$ is the path $v_1v_2\cdots v_\ell$, then the edge $v_1v_\ell$ is privileged. The following  proposition about privileged edges holds.  Note that by replacing $P_\ell$ with any graph $H$,   \Cref{prop:proA} still holds. 
\begin{proposition}\label{prop:proA}
    Given an  ordering  $\sigma =e_1,e_2,...,e_{e(\bar{F})}$ of
edges in $ E(\bar{F}) $. Let
$\sigma'=e'_1,\ldots,e'_{e(\bar{F})}$ be the ordering obtained by moving all privileged edges to the front while preserving their relative ordering, and
preserving the relative ordering of the remaining edges. Then $\sigma$ is weakly $P_\ell$-rainbow saturated  only if $\sigma'$ is   weakly $P_\ell$-rainbow saturated.
\end{proposition}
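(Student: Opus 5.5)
Assume $\sigma$ is weakly $P_\ell$-rainbow saturated; we check the defining condition for each position of $\sigma'$. Let $P$ be the set of privileged edges, so that $\sigma'$ lists $P$ first (in $\sigma$-order) and then $E(\bar F)\setminus P$ (in $\sigma$-order). The only tool is monotonicity: if $F+X$ contains a rainbow copy of $P_\ell$ through $e$ under a colouring, then so does $F+Y$ for every $Y\supseteq X$ under the restricted colouring, since the same subgraph is still present with the same colours. Throughout, colourings are injective on $E(\bar F)$ and the edges of $F$ have the fixed rainbow colouring $c^*$.

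\emph{Privileged edges.} Let $e'_i\in P$ and fix an injective colouring $c$ of $E(\bar F)$. Because $e'_i$ is privileged, $F+e'_i$ contains a rainbow $P_\ell$ through $e'_i$ whatever colour $c(e'_i)$ takes. This copy uses only $e'_i$ together with edges of $F$. Hence it also lies in $F+\{e'_1,\dots,e'_i\}$ and is rainbow under $c$.

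\emph{Non-privileged edges.} Let $e'_i=e_k\notin P$. The edges preceding $e'_i$ in $\sigma'$ are all of $P$ together with the non-privileged $e_j$ with $j<k$. This set contains $\{e_1,\dots,e_{k-1}\}$, so $\{e_1,\dots,e_k\}\subseteq\{e'_1,\dots,e'_i\}$. Given an injective colouring $c$, the saturation property of $\sigma$ gives a rainbow $P_\ell$ through $e_k$ in $F+\{e_1,\dots,e_k\}$. By monotonicity this copy lies in $F+\{e'_1,\dots,e'_i\}$ and is rainbow under the same $c$.

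\emph{The one point to check.} The main obstacle is only to make sure that "privileged" is meant relative to $F$ alone, with colours injective and distinct from the colours on $F$ as the convention fixes. Then a single added edge works for every colour, independently of the other added edges, and the first case is immediate. Every step uses only the presence of a subgraph, never any property of $P_\ell$. So the same proof works verbatim for any graph $H$, as remarked before the statement.
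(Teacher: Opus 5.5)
Your proof is correct and is essentially the paper's argument: the paper argues by contradiction, noting that a failing position $e'_i$ cannot be privileged and then using $\{e_1,\dots,e_j\}\subseteq\{e'_1,\dots,e'_i\}$ for $e_j=e'_i$ together with the same monotonicity, which is your two-case direct argument. One small slip in your last paragraph: the paper's convention does not make the colours on $E(\bar F)$ distinct from those of $c^*$ on $F$ (its later arguments repeatedly set $c(e)=c^*(f)$), but your argument never uses this, since a privileged edge works for an arbitrary colour.
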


\begin{proof}
    Suppose that $\sigma$ is  weakly $P_\ell$-rainbow saturated   while $\sigma'$ is not. Then there is an integer $i$ and an edge coloring $c$ such that there is no rainbow $P_\ell$ in $F_i^{\sigma'}$ containing $e'_i$.  
    By the definition, $e_i'$ is not privileged. Then there is an integer $j\leq i$ such that $e_j=e'_i$ and $\{e_1,\ldots,e_j\}\subseteq \{e_1',\ldots,e_i'\}$. If we restrict the coloring $c$ to $F_j^\sigma$, there will be no rainbow $P_\ell$ in $F_j^\sigma$ containing $e_j$, otherwise this path will also be a rainbow $P_\ell$ in $F_i^{\sigma'}$ containing $e'_i$. This contradicts the assumption that $\sigma$ is  weakly $P_\ell$-rainbow saturated.
\end{proof}

By the above discussion, we may henceforth assume, without loss of generality, that all privileged edges appear first in the ordering.

Now, we introduce the following notions.    
Suppose that $\sigma=e_1,e_2,...,e_{e(\bar{F})}$ is an ordering of edges in  $E(\bar{F})$. Let $F_i^{\sigma}=F+ \{e_1,\ldots,e_i\}$. We omit the superscript $\sigma$ when it is clear from the context.  For $S\subseteq E(F_i)$ and an explicit assignment of $c$ on $\{e_1,\ldots,e_i\}$, define $$r _c^i(S):=\max_{P}\{|E(P)\cap S|: 
 P  \text{ is a rainbow $P_\ell$  in
$ F_i^{\sigma}$ under $c^*\cup c$ that contains $e_i$}\}.$$
Note that if $r_c^i(E(F_i))\leq \ell-2$,
then there is no rainbow copy of $P_\ell$ containing $e_i$ in
$ F_i^{\sigma}$ under the coloring $c\cup c^*$. Consequently, the ordering given above
is not a weakly $P_\ell$-rainbow saturated ordering. For convenience,   $r^i(S)\leq m$ means that   $r_c^i(S)\leq m$ holds for every   assignment of  $c$.


The following result helps to check whether a given ordering of $E(\bar{F})$ is weakly saturated.

\begin{lemma}\label{lemma:edgenum} 
Let  $F$  be a graph on $n$ vertices and $\ell-t$ edges with a  weakly $P_\ell$-rainbow saturated ordering $e_1,e_2,...,e_{e(\bar{F})}$, where $t\in \{0,1\}$ and $n(F)\ge \ell+1\ge 31$. Then for each $i\le \ell-2$,  there is no   $S\subseteq E(F)$ and an explicit assignment of $c$ such that all of  the following hold:
\begin{enumerate}[label=(\arabic*)] 
    
\item\label{1-1} $|E(F)\setminus S|\ge i$, $| S|\ge 2-t$, 
\item\label{1-2} $c(\{e_1,\ldots,e_i\})\subseteq c^*(E(F)\setminus S)$, and
 \item\label{1-3} $r_c^{i}(S)\leq |S|-2+t$.
\end{enumerate} 
\end{lemma}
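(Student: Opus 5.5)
The idea is a direct color-counting argument at step $i$. Suppose, for contradiction, that for some $i\le \ell-2$ there exist $S\subseteq E(F)$ and an explicit assignment of $c$ on $\{e_1,\ldots,e_i\}$ satisfying \ref{1-1}--\ref{1-3}. We show that this assignment already prevents a rainbow $P_\ell$ through $e_i$ in $F_i$, contradicting the assumption that the ordering is weakly $P_\ell$-rainbow saturated.

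First we make sure the assignment is admissible. By \ref{1-2} the colors $c(e_1),\ldots,c(e_i)$ lie in $c^*(E(F)\setminus S)$, and they must be pairwise distinct because $c$ is injective on $E(\bar F)$. This is possible precisely because $|E(F)\setminus S|\ge i$ from \ref{1-1}. The colors of the added edges are allowed to coincide with colors of edges of $F$, since injectivity is required only among the edges of $\bar F$. We extend $c$ to the rest of $E(\bar F)$ using brand-new pairwise distinct colors. These later edges do not appear in $F_i$, so the extension does not matter here.

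Since the ordering is weakly saturated, under $c^*\cup c$ the graph $F_i$ contains a rainbow copy $P$ of $P_\ell$ with $e_i\in E(P)$, so $|E(P)|=\ell-1$. We split $E(P)$ into three parts:
\begin{itemize}
\item $E(P)\cap S$;
\item $E(P)\cap (E(F)\setminus S)$;
\item $E(P)\cap\{e_1,\ldots,e_i\}$.
\end{itemize}
By the definition of $r_c^i$ and by \ref{1-3}, the first part has size at most $r_c^i(S)\le |S|-2+t$. Every edge in the second and third parts carries a color from $c^*(E(F)\setminus S)$; for the third part this is exactly \ref{1-2}. Because $P$ is rainbow, each such color is used at most once on $P$. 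Hence the second and third parts together contain at most $|E(F)\setminus S|$ edges.

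Adding the two bounds and using $e(F)=\ell-t$ gives
\[
\ell-1=|E(P)|\le (|S|-2+t)+|E(F)\setminus S| = e(F)-2+t=\ell-2,
\]
which is a contradiction. The condition $|S|\ge 2-t$ in \ref{1-1} only guarantees that the bound in \ref{1-3} is nonnegative and hence meaningful.

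No step here is a real obstacle. The only point needing care is the middle step: one must observe that the edges of $E(F)\setminus S$ and the added edges $e_1,\ldots,e_i$ compete for the same pool of colors, so together they contribute at most $|E(F)\setminus S|$ edges to any rainbow path. Everything else is bookkeeping.
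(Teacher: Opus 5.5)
Your proof is correct and is essentially the paper's argument. The paper pairs each $e_j$ with a distinct $f_j\in E(F)\setminus S$ satisfying $c(e_j)=c^*(f_j)$ and notes that at most one edge of each pair lies on a rainbow $P$; that is the same as your pigeonhole on the colour set $c^*(E(F)\setminus S)$, and both yield $|E(P)|\le r_c^i(S)+e(F)-|S|\le \ell-2$.
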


\begin{proof} 
Suppose that for some $i$, the conclusion fails, i.e., there is $S\subseteq E(F)$ and an assignment of $c$ such that \ref{1-1}--\ref{1-3} hold. Since $ |E(F)\setminus S|\geq i$, there are pairwise distinct $f_1,\ldots,f_i\in E(F)\setminus S$. It suffices to assume that
$c(e_j)=c^*(f_j)$ for each $j\in [i]$.

Let $P$ be a rainbow $P_\ell$ in $F_i$ under $c^*\cup c$ that contains $e_i$. Since $c(e_j)=c^*(f_j)$ for each $j\in [i]$, for any $1\leq j\leq i$, at most one of $e_j$ and $f_j$ is contained in $P$. Assume that $e_i\in E(P)$ with $ E(P)\cap \{e_1,\ldots, e_i\}=\{e_{i_1},\ldots, e_{i_j}\}$ for some $1\le i_j\le i$. Then 
$$ E(P)\cap(E(F_i)\setminus S)\subseteq (E(F)\setminus S)\cup \{e_{i_1},\ldots,  e_{i_j}\},$$
where $\{f_{i_1},\ldots, f_{i_j}\}\subseteq E(F)\setminus S$. 
Consequently, $$|E(P)\cap(E(F_i)\setminus S)|
\leq e(F)-|S|=\ell-t-|S|.$$ 
Since  $|E(P)\cap S|\leq r_c^i(S)$, it holds that  $$|E(P)|=|E(P)\cap S|+|E(P)\cap(E(F_i)\setminus S)|\leq r_c^i(S) +\ell-t-|S| \leq \ell-2.$$
Hence,  there is no rainbow $P_\ell$ in $F_i$ that contains $e_i$. It follows that  the  given  ordering 
is not weakly $P_\ell$-rainbow saturated, a contradiction. 
\end{proof}

Let $P_4^+$ be the graph obtained from  $P_4=v_1v_2v_3v_4$ 
by adding the edge $v_2v_4$. 
Note that this is the unique simple graph with
degree sequence $(1,2,2,3)$. Moreover, the following result holds. 

\begin{lemma}\label{lemma:strseq}
 Let $H$ be a connected graph with $\ell$ vertices and degree sequence being $(1,2,2,\ldots,2,3)$. Then $H$ is obtained from the path
$v_1v_2\cdots v_{\ell}$  by adding $v_iv_{\ell}$  for some $2\leq i\leq \ell-2$.
\end{lemma}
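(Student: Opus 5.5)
The approach is to count edges first, then follow the unique leaf. Since $H$ has $\ell$ vertices and degree sum $1+2(\ell-2)+3=2\ell$, we get $e(H)=\ell$. As $H$ is connected, it is unicyclic: it is a spanning tree plus exactly one extra edge, and so contains exactly one cycle $C$. Let $v_1$ be the unique vertex of degree $1$ and $w$ the unique vertex of degree $3$. Since $\ell\ge 4$ is implicit (the degree sequence needs at least one $2$ for the conclusion to be meaningful, and $P_4^+$ is the case $\ell=4$), we may assume $\ell\ge 4$.

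First, $C$ must pass through $w$. Otherwise every vertex of $C$ has degree $2$ in $H$, so $C$ has no edges leaving it, and connectivity forces $H=C$. That contradicts the existence of $v_1$.

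Next, start from $v_1$ and walk along $H$. Every vertex other than $v_1$ and $w$ has degree $2$, so the walk $v_1v_2v_3\cdots$ is forced: each internal vertex of degree $2$ has exactly one unused edge to continue along. The walk cannot revisit a vertex before reaching $w$. A revisit would close a cycle at a degree-$2$ vertex, which would then have degree at least $3$, or would return to $v_1$, which has degree $1$. Since the graph is finite, the walk therefore reaches $w$. Write this walk as the path $v_1v_2\cdots v_k$ with $v_k=w$; all of $v_2,\dots,v_{k-1}$ have degree $2$.

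Now $w$ has two further neighbours besides $v_{k-1}$. Remove the path $v_1\cdots v_{k-1}$. In what remains, $w$ has degree $2$ and every other vertex also has degree $2$. This remainder is connected, because any vertex of $H$ reaches the rest only through $w$, as the path $v_1\cdots v_{k-1}$ is a pendant path attached at $w$. So the remainder is a single cycle through $w$ on the vertices $v_k,\dots,v_\ell$. Label this cycle $w=v_k,v_{k+1},\dots,v_\ell$ in cyclic order. Then $H$ is the path $v_1\cdots v_\ell$ together with the edge $v_\ell v_k$.

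In the statement's notation the added edge is $v_iv_\ell$ with $i=k$. The cycle has length $\ell-k+1\ge 3$, because the graph is simple. Also $k\ge 2$, because $v_1$ is not $w$. Together these give $2\le i\le \ell-2$, as claimed.

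The only delicate point is justifying that the walk from $v_1$ is forced and reaches $w$ without repeating a vertex. This is the degree argument above and is routine; otherwise no obstacle is expected.
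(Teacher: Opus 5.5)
Your proof is correct and is essentially the paper's argument with the induction unrolled. The paper deletes the leaf $v_1$ and recurses on $H-v_1$ while the leaf's neighbour has degree $2$; when the neighbour has degree $3$, the remainder is a connected $2$-regular graph, hence a cycle — which is exactly your walk to $w$ followed by removing the pendant path. Your preliminary steps (the edge count, unicyclicity, and the cycle passing through $w$) are never used and can be dropped, and $\ell\ge 4$ need not be assumed, since your argument itself gives $\ell\ge k+2\ge 4$.
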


\begin{proof}
    We prove it by induction on the number of vertices. If $ n(H)=4$, then $H$ is isomorphic to $P_4^+$. Now assume that $ n(H)\geq 5$, $d_H(v_1)=1$ and $v_1v_2\in E(H)$. Let $H'$ be the graph obtained from $H$ by deleting $v_1$.  If $d_H(v_2)=2$, then the degree sequence  of $H'$ is $(1,2,2,\ldots,3)$. By the induction hypothesis, $H'$ is obtained from the path $v_2v_3\cdots v_\ell$ by adding $v_iv_\ell$ for some $3\leq i\leq\ell-2$. Adding back $v_1$, we know that $H$ is obtained from the path $v_1v_2v_3\cdots v_\ell$ by adding $v_iv_\ell$ for some $3\leq i\leq\ell-2$.  If $d_H(v_2)=3$, then $H'$ is connected with degree sequence  $(2,2,\ldots,2)$. Then $H'$ is a cycle $c_2c_3\cdots c_\ell c_2$. Consequently,  $H$ is obtained from the path $v_1v_2v_3\cdots v_\ell$ by adding $v_2v_\ell$. 
\end{proof}

 Now, we consider  $ e(G_F)=\ell-1$ in \Cref{lemma:l1e} with $ G_F$ connected and \Cref{lem:ct=1} with $ G_F$ disconnected. 

\begin{lemma}\label{lemma:l1e} 
Let $F$ be a graph on $n$ vertices, where $n\ge \ell+1\ge 31$. If $ G_F$ is a connected graph with $\ell-1$ edges, then $F\notin  \rw(n,P_\ell)$.
\end{lemma}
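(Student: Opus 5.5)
We argue by contradiction. Suppose $F\in\rw(n,P_\ell)$ and $G_F$ is connected with $\ell-1$ edges. In the notation of \Cref{lemma:edgenum} this is the case $t=1$. We fix a weakly $P_\ell$-rainbow saturated ordering $e_1,e_2,\ldots$ and, by \Cref{prop:proA}, assume that the privileged edges come first.

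\textbf{Step 1: the first edge.} The argument in the proof of \Cref{lem:ulbdd} applies to $e_1$. The graph $F+e_1$ must contain at least two rainbow copies of $P_\ell$ through $e_1$, and their union must avoid every single color of $F$. Since $e(F)=\ell-1$, each such path uses $e_1$ together with $\ell-2$ edges of $F$, that is, all edges of $F$ except exactly one. Call it $f$ for one path and $f'\neq f$ for the other. If $c(e_1)=c^*(g)$ for some $g\in E(F)$, we need a path avoiding $g$. Hence for every edge $g$ of $F$ there is a $P_\ell$ in $F+e_1$ that contains $e_1$ and misses exactly $g$. Equivalently, $F-g+e_1$ contains a spanning path of order $\ell$ for every $g\in E(F)$.

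\textbf{Step 2: the structure of $G_F$.} Each such path has $\ell$ vertices and lies in $G_F+e_1$. Since $G_F$ is connected with $\ell-1$ edges, we have $n(G_F)\le \ell$.

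If $n(G_F)=\ell$, then $G_F$ is a tree, and $e_1$ joins two of its vertices (adding $e_1$ to an isolated vertex gives too many vertices). So $G_F+e_1$ is unicyclic on $\ell$ vertices, and deleting any edge $g$ must leave a Hamiltonian path. Deleting a non-cycle edge disconnects the graph. So every edge lies on the cycle, $G_F+e_1=C_\ell$, and $G_F=P_\ell$ itself.

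If $n(G_F)=\ell-1$, then $G_F$ is unicyclic, and $e_1$ must go to a new vertex $w$. The path must then end at $w$, so $G_F-g$ must have a Hamiltonian path starting at the attachment vertex, for every edge $g$. Again every edge must be a cycle edge, which forces $G_F=C_{\ell-1}$. Removing a cycle edge not incident to the attachment vertex leaves a path whose end is not the attachment vertex, unless $\ell-1\le 3$. This is impossible since $\ell\ge30$.

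Hence $F=P_\ell\cup(n-\ell)K_1$.

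\textbf{Step 3: excluding $F=P_\ell\cup(n-\ell)K_1$.} Write $P_\ell=v_1\cdots v_\ell$. The only privileged edge is $e_1=v_1v_\ell$. Indeed, any other non-edge $e$ together with $\ell-2$ edges of $F$ would again have to form a Hamiltonian-path structure on $\ell$ vertices avoiding every single edge, which we have just shown requires $e=v_1v_\ell$.

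After adding $e_1$, $F_1$ is a cycle $C_\ell$ together with isolated vertices. For the second edge $e_2$, we apply \Cref{lemma:edgenum} with $i=2$ and $t=1$. We choose $c(e_1)$ and $c(e_2)$ to be colors of two edges of $F$ placed far apart, and take $S$ to be the edge set of $F$ minus these two edges, of size $\ell-3\ge1$. We must show $r^2_c(S)\le|S|-1$. Any $P_\ell$ through $e_2$ in $C_\ell+e_2$ (with $e_2$ a chord or a pendant edge to a new vertex) must omit at least one cycle edge beyond those blocked by color conflicts.

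A case check on the type of $e_2$ handles this:
\begin{itemize}
\item If $e_2$ is a chord $v_av_b$, we choose the two conflicting colors on cycle edges incident to $v_a$ and $v_b$. A rainbow path then needs the chord plus $\ell-2$ cycle edges avoiding both colored edges, which is impossible.
\item If $e_2=v_aw$ with $w$ isolated, the path is $w$ followed by $\ell-1$ cycle vertices. It then uses $\ell-2$ cycle edges, and we choose $c(e_2)$ to be the color of an edge that every such path must use.
\end{itemize}

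\textbf{Main obstacle.} I expect the main difficulty in Step 3 to be choosing colors that block every rainbow path simultaneously. The path may also use $e_1$ itself, so the color of $e_1$ must be set so that it collides with an edge the path is forced to use.
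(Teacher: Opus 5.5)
Your Steps 1--2 take a different and cleaner route than the paper. The paper never uses the observation that, because $e(F)=\ell-1$ is tight, colouring $e_1$ like an edge $g$ forces $(E(F)\setminus\{g\})\cup\{e_1\}$ to be the edge set of a $P_\ell$ for every $g\in E(F)$. Instead it applies Lemma~\ref{lemma:edgenum} with degree-based sets. When $n(G_F)=\ell-1$ it takes $S=\partial_{G_F}(z)$ at the attachment vertex, or colours $e_1$ like $e_z$ if $z$ is a leaf. When $G_F$ is a tree it takes $S=\partial_{G_F}(x)$ at a vertex of degree at least $3$, treating the star separately. This forces $G_F=P_\ell$ and $e_1=uv$. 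For $e_2$ it uses $S=\partial_{G_F}(z)$ at an interior endpoint, or $c(e_2)=c^*(e_u)$, $c(e_1)=c^*(e_v)$ when $e_2$ joins a leaf to an isolated vertex.

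Your exact characterization turns the structure of $F$ and $e_1$ into a short unicyclic-graph argument, which is more transparent. The paper's machinery is less sharp here, but it is reused for the $e(F)=\ell$ cases, where a $P_\ell$ through $e_1$ omits two edges of $F$ and your characterization is unavailable.

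A few small holes in Step 2 need filling:
\begin{itemize}
\item You must exclude $n(G_F)\le\ell-2$. This holds because a $P_\ell$ through $e_1$ has at most one vertex outside $G_F$.
\item ``Too many vertices'' is false for the star $K_{1,\ell-1}$. The star must instead be excluded because its centre keeps degree at least $\ell-2$.
\item The caveat ``unless $\ell-1\le 3$'' is wrong but harmless, since every cycle has an edge not incident to the attachment vertex.
\end{itemize}

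The chord case of Step 3, however, is justified incorrectly. A rainbow path through $e_2$ must avoid the edge whose colour $e_2$ receives. It need avoid the edge whose colour $e_1$ receives only if it uses $e_1$. Also, each of the two Hamiltonian paths of $C_\ell+v_av_b$ through the chord omits one cycle edge at $v_a$ and one at $v_b$. So an unspecified choice of coloured edges ``incident to $v_a$ and $v_b$'' can leave one of them rainbow. Concretely:
\begin{itemize}
\item Let $e_2=v_1v_b$ and $c(e_1)=c^*(v_1v_2)$. Choosing $c(e_2)=c^*(v_bv_{b+1})$ leaves $v_2v_3\cdots v_bv_1v_\ell v_{\ell-1}\cdots v_{b+1}$ rainbow.
\item With the same $e_1$ colour, $c(e_2)=c^*(v_{b-1}v_b)$ leaves $v_{b-1}\cdots v_2v_1v_bv_{b+1}\cdots v_\ell$ rainbow.
\item For a chord with $2\le a<b\le\ell-1$, colouring by $v_av_{a+1}$ and $v_bv_{b+1}$ leaves $v_{a+1}\cdots v_bv_av_{a-1}\cdots v_1v_\ell\cdots v_{b+1}$ rainbow.
\end{itemize}

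The repair is simpler than your ``main obstacle'' suggests: it is your ``far apart'' idea applied to $e_2$ alone. In both the chord and the pendant case there are exactly two copies of $P_\ell$ through $e_2$ in $F_2$. They omit only cycle edges that meet a vertex at cycle-distance at most $1$ from an endpoint of $e_2$. So setting $c(e_2)=c^*(f)$ for any $F$-edge $f$ far from these, which exists since $\ell\ge 30$, kills both paths, and $c(e_1)$ plays no role. You should also record the trivial case where $e_2$ joins two isolated vertices.
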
	
\begin{proof}
Suppose $F\in  \rw(n,P_\ell)$. Then let $e_1,e_2,...,e_{e(\bar{F})}$ be weakly $P_\ell$-rainbow saturated. Note that necessarily    $  n(G_F)\geq \ell-1$.

If $  n(G_F)=\ell-1$, then $e_1=zw$ must be an edge connecting $ G_F$ and $F\backslash  G_F$. 
Assume that $z\in V(G_F)$ and $w\in V(F)\setminus V(G_F)$. If $d_{G_F}(z)=1$,  when  
	$c(e_1)=c^*(e_z)$, it holds that $r^1_c(E(G_F))=0$. So we assume that
$2\leq d_{G_F}(z)\leq \ell-2$. Let $S=\partial_{G_F}(z)$, then $r^1(S)\leq 1\leq |S|-1$ and $|E(F)\setminus S|\geq 1$.
By Lemma~\ref{lemma:edgenum}, the ordering is not  
weakly $P_\ell$-rainbow saturated, a contradiction.

Now, suppose that $  n(G_F)=\ell$. Then $ G_F$ is a tree. 
If $|R_1|\geq 3$, then there is a vertex $x$
with $d(x)\geq 3$.  If $d(x)=\ell-1$, then $ G_F$ is the star $S_\ell$ and  $r^1(E(F))\leq 2$. Thus, assume that $d(x)\leq\ell-2$. 
Set $S=\partial_{G_F}(x)$, then $r^1(S)\leq 2\leq |S|-1$ and $|E(F)\setminus S|\geq 1$.
By Lemma~\ref{lemma:edgenum}, the ordering is not  weakly $P_\ell$-rainbow saturated, a contradiction. 
Hence,  $ G_F$ is a path with $R_1=\{u,v\}$.  Then $e_1=uv$ is privileged. Let $e_2=zw$. If  $z\in V(G_F)\backslash R_1$,  by setting $S=\partial_{G_F}(z)$,  we have
$r^2(S)\leq 1\leq |S|-1$ and $|E(F)\setminus S|\geq 2$.
By Lemma~\ref{lemma:edgenum}, the ordering is not  
weakly $P_\ell$-rainbow saturated, a contradiction. Thus, without loss of generality, assume that  $w\in V(F)\setminus V(G_F)$
and $z=u$.   Then when setting $c(e_2)=c^*(e_u)$ and
$c(e_1)=c^*(e_v)$,  the longest rainbow path in $F_2$ containing $e_2$ is $wzv$, a contradiction.  

Therefore, $F\notin \rw(n,P_\ell)$.
\end{proof}

\begin{lemma}\label{lem:ct=1}
Let $F$ be a graph on $n$ vertices, where $n\ge \ell+1\ge 31$. If $ G_F$ is  disconnected with $\ell-1$ edges, then $F\notin  \rw(n,P_\ell)$. 
\end{lemma}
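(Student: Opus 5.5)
The argument only needs the first edge of the ordering. Suppose $F\in\rw(n,P_\ell)$ with $G_F$ disconnected and $e(G_F)=\ell-1$, and let $e_1,e_2,\ldots$ be a weakly $P_\ell$-rainbow saturated ordering. Since $e_1$ is added first, it must be privileged. The plan is to show that privilegedness of $e_1$ forces $F+e_1$ to be a cycle $C_\ell$, so that $G_F$ is a path, contradicting disconnectedness.

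\textbf{Step 1 (every edge of $F$ can be avoided).} A rainbow $P_\ell$ through $e_1$ in $F+e_1$ has $\ell-1$ edges: $e_1$ together with exactly $\ell-2$ edges of $F$. Fix an arbitrary $f\in E(F)$ and give $e_1$ the colour $c(e_1)=c^*(f)$. Privilegedness still yields a rainbow $P_\ell$ through $e_1$, and such a path cannot use $f$. Since $e(F)=\ell-1$, the path then uses \emph{every} edge of $F$ other than $f$. So the key property is that, for every $f\in E(F)$, the graph $(F+e_1)-f$ minus its isolated vertices is exactly a path $P_\ell$ containing $e_1$.

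\textbf{Step 2 (structure of $F+e_1$).} Fix any $f$. The graph $(F+e_1)-f$, restricted to its non-isolated vertices, is a path on $\ell$ vertices. Hence $H:=G_{F+e_1}$ is connected (adding $f$ back keeps it so) and has $\ell$ edges. Moreover $f$ has both endpoints on that path: otherwise $f$ would lie in a component of $G_F$ not met by the path, contradicting that the path uses all of $E(F)\setminus\{f\}$. So $H$ has $\ell$ vertices and $\ell$ edges, i.e. $H$ is connected and unicyclic. For each $f\in E(F)$, $H-f$ is still connected (it is a spanning path), so $f$ is not a bridge of $H$ and lies on the unique cycle. Thus every edge of $H$ except possibly $e_1$ lies on the cycle. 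A connected unicyclic graph with at most one edge off its cycle, where that edge is a pendant edge, would make $H-f$ a tree with a vertex of degree $3$ (for $f$ a cycle edge not incident to the pendant attachment point; with $\ell\ge 30$ such an $f$ exists), not a path. Hence $e_1$ is also on the cycle and $H\cong C_\ell$.

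\textbf{Step 3 (conclusion).} Then $G_F=H-e_1$ is a path $P_\ell$, which is connected, contradicting the hypothesis. Therefore no weakly $P_\ell$-rainbow saturated ordering exists and $F\notin\rw(n,P_\ell)$.

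The main obstacle is Step 2: deducing rigorously that $H$ is connected with exactly $\ell$ vertices. In particular, components of $G_F$ not touched by $e_1$ (for example a stray single edge) must be ruled out. This is handled by applying Step 1 to an edge $f$ in the touched part, which forces the path to contain the stray components, and that is impossible for a path through $e_1$. The remaining bookkeeping is routine.
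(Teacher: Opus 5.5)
Your route differs from the paper's. The paper argues locally. It first uses a colouring argument to show that $e_1=zw$ must join two components $G_1,G_2$ of $G_F$. It then excludes $d_{G_F}(z)\ge 2$ (and $d_{G_F}(w)\ge 2$) via Lemma~\ref{lemma:edgenum} with $S=\partial_{G_F}(z)$. Finally, when $d_{G_F}(z)=d_{G_F}(w)=1$, it colours $e_1$ with $c^*(e_z)$, so every rainbow path through $e_1$ lies in $\{z\}\cup V(G_2)$, which is too short because $n(G_1)\ge n(G_2)$.

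You instead observe that $e(F)=\ell-1$ leaves no slack. Privilegedness of $e_1$ then forces $(E(F)\setminus\{f\})\cup\{e_1\}$ to be exactly the edge set of a $P_\ell$ for \emph{every} $f\in E(F)$, and you classify such graphs. This idea is correct and avoids both Lemma~\ref{lemma:edgenum} and the degree case split. It also proves more: a graph with $\ell-1$ edges that has a privileged non-edge must have $G_F\cong P_\ell$, with that non-edge joining the two ends. This also covers the reduction to the path case in the proof of Lemma~\ref{lemma:l1e}.

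Step~2, however, is not correctly justified. For one fixed $f$ you cannot conclude that $H$ is connected or that both ends of $f$ lie on the path $H-f$. There are two situations you need to exclude:
\begin{itemize}
\item $f$ could be vertex-disjoint from that path. Then $H$ is disconnected, so ``adding $f$ back keeps it so'' is false.
\item $f$ could meet the path in exactly one vertex. Then $H$ is a tree on $\ell+1$ vertices.
\end{itemize}
Your stated reason (``$f$ would lie in a component of $G_F$ not met by the path'') applies only to the first situation, and even there it gives no contradiction for that $f$ alone. The pendant situation is not addressed anywhere, including your closing paragraph.

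Both cases are disposed of by applying Step~1 to a second edge:
\begin{itemize}
\item If $f$ misses the path, then $f$ is a $K_2$-component of $H$, since all other edges of $H$ lie on the path. So $H-f'$ is disconnected for any $f'\in E(F)\setminus\{f\}$.
\item If $f$ meets the path in one vertex, then $n(H)=\ell+1$. So for every $f'\in E(F)$ exactly one vertex becomes isolated in $H-f'$; that is, every edge of $F$ is a pendant edge of $H$. Then each of the $\ell-2\ge 3$ edges of $E(F)\setminus\{f\}$ has an end of degree $1$ in the path $H-f$. But a path with at least two edges has only two such edges, a contradiction.
\end{itemize}
With these cases added, $n(H)=\ell$, and the rest of your Step~2 and Step~3 goes through as written.
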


\begin{proof}
As before, suppose that $F\in  \rw(n,P_\ell)$.
Since every component of $ G_F$ has at most $\ell-2$ edges, $e_1=zw$ must be an edge 
connecting two components of $ G_F$. Suppose not, that $z,w\in V(G_1)$ or $z\in V(G_1)$ and $w\in V(F)\setminus V(G_F)$. Then when $c(zw)\in c^*(E(G_1))$, there is no rainbow $P_\ell$ containing $zw$ in $F_1$. Thus, let $G_1$ and $G_2$ be two components of $ G_F$
such that $z\in V(G_1)$ and $w\in V(G_2)$ and $ n(G_1)\ge  n(G_2)$.  

 If $d_{G_F}(z)\geq 2$, by setting  $S=\partial_{G_F}(z)$,   we have $2\leq|S|\leq\ell-2$ and $r^1(S)\leq1$. Lemma~\ref{lemma:edgenum}~ gives a contradiction. The case when $d_{G_F}(w)\geq 2$ is same. 
Thus, assume that $d_{G_F}(z)=d_{G_F}(w)=1$. Then let $c(e_1)=c^*(e_z)$. Consequently, the longest rainbow path in $F+ \{e_1\}$ containing $e_1$ has order  at most $1+ n(G_2)\leq\ell-2$ since $ n(G_1)\geq n(G_2)$, a  contradiction. 
\end{proof}

As addressed before, we first exclude three exact structures of $ G_F$:
\begin{itemize}
    \item $ G_F$ is obtained from the path
$v_1v_2\cdots v_{\ell}$  by adding $v_iv_{\ell}$  for some $2\leq i\leq \ell-2$;
\item $ G_F$ is the path $v_1v_2\cdots v_{\ell+1}$;
\item  $ G_F$  is the union of three internally vertex-disjoint paths $Q_1,\; Q_2,\; Q_3,$ sharing a unique endpoint $x$, with $ e(G_F)=\ell$, 
\end{itemize}
in \Cref{lemma:subk4+}, \Cref{lemma:subedge} and  \Cref{lemma:substar}, respectively.

\begin{lemma}\label{lemma:subk4+}
Let $F$ be a graph on $n$ vertices, where $n\ge \ell+1\ge 31$. If $ G_F$ is the graph obtained from the path
$v_1v_2\cdots v_{\ell}$  by adding $v_iv_{\ell}$  for some $2\leq i\leq \ell-2$,  then $F\notin  \rw(n,P_\ell)$.
\end{lemma}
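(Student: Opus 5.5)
The plan is to argue by contradiction in the style of \Cref{lemma:l1e}. Suppose $F\in\rw(n,P_\ell)$ and fix a weakly $P_\ell$-rainbow saturated ordering $e_1,e_2,\dots$. By \Cref{prop:proA}, all privileged edges may be assumed to come first. Write $G_F$ as the cycle $C=v_iv_{i+1}\cdots v_\ell v_i$, of length $\ell-i+1\ge 3$, with the pendant path $v_1v_2\cdots v_i$ attached at $v_i$. Here $e(G_F)=\ell=n(G_F)$, so $t=0$. The only vertex of degree $3$ is $v_i$, and the only leaf is $v_1$. The key structural fact is that $G_F$ has very few Hamiltonian paths. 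Each one starts at $v_1$, runs to $v_i$, and then goes around $C$ in one of the two directions. Consequently, a rainbow $P_\ell$ in $F_j$ for small $j$ must be almost a Hamiltonian path of $G_F$.

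\textbf{Step 1: determine the privileged edges.} An edge $e=zw\in E(\bar F)$ is privileged iff for every $f\in E(G_F)$ the graph $G_F-f+e$ (plus $w$ if $w\notin V(G_F)$) contains a $P_\ell$ through $e$; indeed, colouring $c(e)=c^*(f)$ is the adversary's move. I would test three choices of $f$.
\begin{itemize}
\item $f=v_1v_2$: this isolates $v_1$ unless $e$ is incident to $v_1$.
\item $f$ one of the two cycle edges at $v_i$.
\item $f=v_{i-1}v_i$: this disconnects the pendant path from the cycle.
\end{itemize}
If $w\notin V(G_F)$, then $G_F-f$ must contain a path on $\ell-1$ vertices ending at $z$ for every $f$. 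Taking $f=v_1v_2$ forces a Hamiltonian path of $G_F-v_1$ ending at $z$, so $z\in\{v_{i+1},v_\ell\}$. A second choice of $f$, such as the cycle edge at $z$ or the chord, then kills this case. If both $z,w\in V(G_F)$, the same test with $f=v_1v_2$ forces $v_1\in e$. Running through $f$ on the cycle leaves only a few candidates, roughly $v_1v_{i+1}$ and $v_1v_\ell$, possibly none. So the privileged set $P$ is explicitly a small set of edges at $v_1$.

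\textbf{Step 2: the first non-privileged edge.} After the privileged edges are added, $F_{|P|}$ is still a sparse graph of bounded complexity, for instance a cycle through $v_1$ with one or two chords. Let $e_j$ be the first non-privileged edge and write $e_j=zw$. I would apply \Cref{lemma:edgenum} with index $j\le 3\le\ell-2$ and $S=\partial_{G_F}(x)$ for a suitable vertex $x$: either $x=z$ when $d_{G_F}(z)\ge 2$, or $x=v_i$. The preceding colours $c(e_1),\dots,c(e_{j-1})$ are assigned to distinct edges of $E(F)\setminus S$ chosen to block the alternative routes. This works because $|E(F)\setminus S|\ge \ell-3\ge j$, and \ref{1-2} is arranged by the assignment $c(e_k)=c^*(f_k)$. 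The goal is $r^j_c(S)\le |S|-2$, which requires a rainbow $P_\ell$ through $e_j$ to miss at least two edges of $S$. When $z$ is a leaf-like vertex or lies outside $G_F$, I would instead argue directly, as at the end of \Cref{lemma:l1e}. There, colouring $e_j$ with the colour of the edge that the near-Hamiltonian path needs (and the earlier privileged edges with cycle edges) caps the longest rainbow path through $e_j$ at fewer than $\ell$ vertices.

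The main obstacle is Step 2. Because $t=0$, \Cref{lemma:edgenum} needs the stronger bound $r\le|S|-2$, and a vertex star with $|S|=3$ gives only $r\le 2$ for free. I expect to choose $S$ as the degree-$3$ star at $v_i$ together with one adjacent cycle edge, or as two stars. I would then use the colour assignment on the privileged edges to show that every rainbow $P_\ell$ through $e_j$ must pass through $v_i$ using the pendant edge and skip an edge of $S$. Verifying this requires a small case split on the position of $z$ (pendant path, cycle, or outside $G_F$) and on $i$ (in particular $i=2$ and $i=\ell-2$, where the pendant path or the cycle is very short).
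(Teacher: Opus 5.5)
Your framework matches the paper's: privileged edges go first by Proposition~\ref{prop:proA}, and the first non-privileged edge is then handled by Lemma~\ref{lemma:edgenum} or an explicit colouring. The gap is that the proposal stops where the proof's content begins, and several concrete pieces are imprecise or wrong. In Step~1 there is no ``possibly none''. Both $v_1v_\ell$ and $v_1v_{i+1}$ are privileged, since each closes a cycle through all of $V(G_F)$: $v_1v_2\cdots v_\ell v_1$ and $v_1\cdots v_iv_\ell v_{\ell-1}\cdots v_{i+1}v_1$ respectively. So $\{e_1,e_2\}$ is exactly this pair and $j=3$. Also, for $i=2$ your test $f=v_1v_2$ leaves every cycle vertex as a candidate $z$, not only $v_3,v_\ell$. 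Since $F_2$ contains $v_1v_2\cdots v_\ell v_1$, your heuristic that a rainbow $P_\ell$ through $e_3$ must reach $v_i$ via the pendant edge fails as stated. For example, if $e_3=wv_{i+1}$ with $w\notin V(G_F)$, the path $wv_{i+1}v_{i+2}\cdots v_\ell v_1v_2\cdots v_{i-1}$ has $\ell$ vertices and avoids $v_i$. Cutting such routes requires choosing the colours of $e_1,e_2$, which is exactly what you defer. In Step~2, $S=\partial_{G_F}(z)$ gives a colour-independent bound $r^3(S)\le|S|-2$ only for $z=v_i$; when $d_{G_F}(z)=2$ it would require $r^3_c(S)\le 0$.

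The missing part is the case analysis on $e_3=zw$, which is the whole lemma. Write $Q=v_1\cdots v_{i-1}$ and $C=v_iv_{i+1}\cdots v_\ell v_i$. The paper first removes most positions by colour-independent bounds:
\begin{itemize}
\item $S=\partial_{G_F}(z)\cup\partial_{G_F}(v_i)$ with $|S|=5$ and $r^3(S)\le3$, if $z\notin R_1\cup N[v_i]$;
\item $S=\partial_{G_F}(z)\cup\partial_{G_F}(w)$ with $|S|=4$ and $r^3(S)\le2$, if $z,w\in N(v_i)$;
\item $S=\partial_{G_F}(v_i)$, if $z=v_i$.
\end{itemize}
Two kinds of cases survive. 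First, $e_3=v_1v_{i-1}$ (so $i\ge4$). Here $Q+e_3$ is a cycle, so $S=E(Q)\cup\partial_{G_F}(v_i)$ gives $r^3(S)\le i-1=|S|-2$ when $i\le\ell-4$, and $S=\{v_{i-2}v_{i-1},v_{i-1}v_i\}\cup E(C)$ works when $i\ge\ell-3$. Second, $e_3=zw$ with $w\notin V(G_F)$ and $z\in\{v_1,v_{i-1},v_{i+1},v_\ell\}$. For $z=v_{i-1}$ a larger colour-independent $S$ works, namely the union of the stars at $v_1,v_{i-1},v_i,v_{i+1},v_\ell$. The other three need hand-picked colours:
\begin{itemize}
\item For $z=v_1$ and $i\ge4$, set $c(e_3)=c^*(v_{i-1}v_i)$. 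Then $e_1,e_2$ (both at $v_1$) are the only usable links between $V(Q)$ and $V(C)$, so a rainbow path through $e_3$ has at most $\max\{i,\ell-i+3\}\le\ell-1$ vertices. For $i\le3$ one also colours $e_1,e_2$ with cycle edges.
\item For $z=v_{i+1}$ (symmetrically $v_\ell$) and $i\le\ell-4$, set $c(e_3)=c^*(v_iv_\ell)$, $c(e_2)=c^*(v_1v_2)$ and $c(e_1)=c^*(v_{\ell-1}v_\ell)$. This caps such paths at $\max\{i+3,\ell-i+1\}\le\ell-1$ vertices. Different edges are needed when $i\in\{\ell-3,\ell-2\}$.
\end{itemize}
Until these cases are specified and checked, what you have is a plan rather than a proof.
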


\begin{proof}
Suppose  that $F\in  \rw(n,P_\ell)$ with a weakly $P_\ell$-rainbow saturated ordering $e_1,e_2,...,e_{e(\bar{F})}$ of
edges in $E(\bar{F})$.   By Proposition \ref{prop:proA},   we may assume that $e_1=v_1v_{\ell}$ and $e_2=v_1v_{i+1}$. Write $e_3=zw$.
Let $Q$ be the path $v_1v_2\cdots v_{i-1}$ and $C$
be the cycle $v_iv_{i+1}\cdots v_{\ell}v_{i}$.

We first eliminate several possibilities for the locations of $z$ and $w$ by symmetry of $z$ and $w$.
\begin{itemize}
\item If $z\in V(G_F)\setminus (R_1\cup N[v_i])$, let $
S=\partial_{G_F}(z)\cup\partial_{G_F}(v_i)
$.
Then $|S|=5$ and $r^3(S)\leq 3$.

\item If $z,w\in N(v_i)$, let
$S=\partial_{G_F}(z)\cup \partial_{G_F}(w)$. Note that if $i=\ell-2$, then $zw\neq v_{\ell-1}v_\ell$. Moreover, if $i=2$, then $zw\neq e_2$ since $e_2=v_1v_3$.  
Then $|S|=4$ and $r^3(S)\le2$. 

\item If $z=v_i$, let $S=\partial_{G_F}(v_{i})$. Then $|S|=3$ and $r^3(S)\le1$.
\end{itemize}
In each case, Lemma~\ref{lemma:edgenum} implies that the given edge ordering cannot be weakly $P_\ell$-rainbow saturated. Hence,  none of the above cases can occur.  We now distinguish the remaining possibilities.

\medskip
\noindent\textbf{Case 1.} $e_3\in E(\bar {G}_F)$.

By the preceding discussion, it follows that $e_3=v_1v_{i-1}$.  
Suppose first that $i\le\ell-4$. Define $S=S_1\cup S_2$ where $S_1=E(Q)$ and $S_2=\{v_{i-1}v_i,v_iv_{i+1},v_iv_{\ell}\}$. 
We have $r^3(S_1)\leq |E(Q)|-1= i-3$ and  $r^3(S_2)\leq 2$. Thus, $r^3(S)\leq i-1$. Since $|S|=i+1\leq \ell-3$, Lemma~\ref{lemma:edgenum} yields a contradiction. Now assume that $\ell-3\leq i\leq \ell-2$. Define $S=S_1\cup S_2$ where $S_1=\{v_{i-2}v_{i-1},v_{i-1}v_{i} \}$ and $S_2=E(C)$. Then $r^3(S_1)\leq 1$ and $r^3(S_2)\leq \ell-i$. Thus, $r^3(S)\leq \ell+1-i$. Since $|S|=\ell-i+3\leq 6\leq\ell-3$, again Lemma~\ref{lemma:edgenum} gives a contradiction.

Henceforth, assume that $e_3\in E(\bar{F})\setminus E(\bar{G}_F)$. Without loss of generality, suppose that $z\in V(G_F)$ and $w\in V(F)\setminus V(G_F)$. By the initial discussion, $z\in R_1\cup N(v_{i})$. By symmetry of $v_{i+1}$ and $v_\ell$, it is sufficient to consider that $z\in \{v_1,v_{i-1},v_{i+1}\}$.  

\medskip
\noindent\textbf{Case 2.} $z=v_1$.

Let $c(e_3)=c^*(v_{i-1}v_i)$. Suppose first that $i\ge4$, and let $P$ be a rainbow path containing $e_3$. If $V(P)\cap V(Q)\neq \{v_1\}$, then  $V(P)\cap V(C)= \emptyset$. Conversely, if $V(P)\cap V(C)\neq \emptyset$, then  $V(P)\cap V(Q)= \{v_1\}$. Since $ n(Q)\ge3$ and $ n(C)\ge 3$, it follows that $ n(P)\leq \ell-1$.

Now, assume that $2\leq i\leq 3$. Let $c(e_1)=c^*(v_{5}v_6)$ and $c(e_2)=c^*(v_{6}v_7)$. Again let $P$ be a rainbow path containing $e_3$. If neither $e_1$ nor $e_2$ belongs to $P$, then $ n(P)\leq 3$.
If $e_1\in E(P)$, then $v_5v_6\notin E(P)$, implying $ n(P)\leq\ell-1$.
Similarly, if $e_2\in E(P)$, then $v_6v_7\notin E(P)$, and again $ n(P)\leq\ell-1$.

\medskip
\noindent\textbf{Case 3.}  $z  =v_{i-1}$.

By Case 2, we may assume that $i\geq 3$ when $z=v_{i-1}$. 

 
Since $\ell\ge 30$, we may let $S=\partial_F( v_1 )\cup\partial_F( z) \cup\partial_F(v_{i})\cup\partial_F(v_{i+1} )\cup\partial_F(v_{\ell} )$ and set $c(\{e_1,e_2,e_3\})\subseteq c^*(E(F)\setminus S)$.  Let $P$ be  a rainbow path containing $e_3$ in $F_3$. If $P$ contains $v_1$, then $|E(P)\cap (\partial_F( z)\cup\partial_F(v_{i+1} )\cup\partial_F( v_\ell ))|\le |\partial_F( z)\cup\partial_F(v_{i+1} )\cup\partial_F( v_\ell )|-2$. If $P$ does not contain $v_1$, then  $|E(P)\cap (\partial_F( v_1)\cup\partial_F( v_{i}))|\le |\partial_F( v_1)\cup\partial_F( v_{i})|-2$. Since $S=\partial_F( v_1 )\cup\partial_F( z) \cup\partial_F(v_{i})\cup\partial_F(v_{\ell} )$,  Lemma~\ref{lemma:edgenum} gives a contradiction.


\medskip
\noindent\textbf{Case 4.} $z=v_{i+1}$.

 In the following, we will pick three edges $\{f_1,f_2,f_3\}\subseteq E(F)$ depending on specific cases so that we can set $c(e_i)=c^*(f_i)$. Then let $R=\{ f_1,f_2,f_3\}$, $S=E(F)\setminus R$ and $P$ be a rainbow path containing $e_3$ in $F_3$.
 Consequently, by  \Cref{lemma:edgenum}, it suffices to show that $r_c^3(S)\le |S|-2$.

If $i\leq\ell-4$, set $c(e_3=wv_{i+1})=c^*(v_iv_{\ell})$, $c(e_2=v_1v_{i+1})=c^*(v_1v_2)$ and
$c(e_1=v_1v_{\ell})=c^*(v_{\ell-1}v_{\ell})$. By the setting of $c$ on $\{e_1,e_2,e_3\}$, to make $|E(P)\cap S|$ as large as possible, we know that $P$ does not contain $e_1$ or $e_2$. Then  $E(P)$ is disjoint with $\{v_2v_3,v_3v_4\}\subseteq S$ or $\{v_{\ell-3}v_{\ell-2},v_{\ell-2}v_{\ell-1}\}\subseteq S$, a contradiction.


If $i\in \{\ell-3,\ell-2\} $, set $c(e_3=wv_{i+1})=c^*(v_iv_{\ell})$, $c(e_2=v_1v_{i+1})=c^*(v_7v_8)$ and
$c(e_1=v_1v_{\ell})=c^*(v_{8 }v_{9})$.  By the same argument as above, we get a contradiction.

\medskip
Therefore, the conclusion holds. 
\end{proof}

\begin{lemma}\label{lemma:subedge} 
Let $F$ be a graph on $n$ vertices, where $n\ge \ell+1\ge 31$. If  $ G_F$ is the path $v_1v_2\cdots v_{\ell+1}$, then $F\notin  \rw(n,P_\ell)$. 
\end{lemma}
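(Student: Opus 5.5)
The plan is to argue by contradiction, in the same way as \Cref{lemma:l1e} and \Cref{lemma:subk4+}. Suppose $F\in\rw(n,P_\ell)$ with $G_F=v_1v_2\cdots v_{\ell+1}$, and fix a weakly $P_\ell$-rainbow saturated ordering. By \Cref{prop:proA} we may assume the privileged edges come first.

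\textbf{Step 1: find the privileged edges.} An added edge $e$ lies in a rainbow $P_\ell$ exactly when the $\ell-2$ other path edges can be chosen inside $F$ avoiding the single edge of $F$ whose colour $c(e)$ copies. So $e$ is privileged iff for every $f\in E(F)$ there is a $P_\ell$ in $F+e-f$ through $e$.
\begin{itemize}
\item An edge $wv_j$ with $w$ isolated is never privileged. Colouring it like the edge of $G_F$ at $v_j$ on the longer side leaves a path through $wv_j$ with at most about $\ell/2+2<\ell$ vertices; for $j\in\{1,\ell+1\}$ take $c=c^*(v_1v_2)$, resp.\ $c^*(v_\ell v_{\ell+1})$.
\item A chord $v_av_b$ ($a<b$) creates the cycle $v_a\cdots v_bv_a$ with two tails of lengths $a-1$ and $\ell+1-b$. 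A $P_\ell$ through the chord must use almost all of the $\ell+1$ vertices. So the chord is privileged only if both tails are short and the cycle is long, i.e.\ $a\le 3$ and $b\ge \ell-1$ (up to the reflection $v_j\leftrightarrow v_{\ell+2-j}$).
\item For a chord with $a\ge 4$ or $b\le \ell-2$, choose $f$ to be the cycle edge at $v_a$ or at $v_b$ on the side of the longer tail; then every path through the chord is too short.
\end{itemize}
I will check these claims by short direct computations. The upshot is that the privileged set $\Pi$ has at most $9$ edges, all inside $\{v_1,v_2,v_3\}\times\{v_{\ell-1},v_\ell,v_{\ell+1}\}$.

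\textbf{Step 2: analyse the first non-privileged edge.} Let $k=|\Pi|+1\le 10\le \ell-2$ and $e_k=zw$; we must show some colouring kills it. The key device is the one from Case~4 of \Cref{lemma:subk4+}. Colour each privileged edge $e_j$ ($j<k$) with the colour of some far-away edge of the path, say $v_{5+j}v_{6+j}$ or a mirror edge near the other end, chosen so as to break the useful cycles through the end regions. Then colour $e_k$ to cut the remaining route, and apply \Cref{lemma:edgenum} with $t=0$, showing $r^k_c(S)\le|S|-2$ for $S=E(F)\setminus\{f_1,\dots,f_k\}$. The cases are:
\begin{itemize}
\item both $z,w$ interior with $z$ far from the ends;
\item $z\in V(G_F)$ and $w$ isolated;
\item $zw$ a non-privileged chord inside an end region.
\end{itemize}
In the first case, use $S=\partial_{G_F}(z)\cup\partial_{G_F}(w)$ (or just $\partial_{G_F}(z)$) exactly as in the bullet list of \Cref{lemma:subk4+}. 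Here $r^k(S)\le|S|-2$ because a path can use at most one edge of $\partial(z)$ once it also uses $e_k$ at $z$.

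\textbf{Main obstacle.} The hard case is $w$ isolated and $z\in\{v_1,v_2,v_3,v_{\ell-1},v_\ell,v_{\ell+1}\}$. After $\Pi$ is added, the end region contains many chords, and these might supply long rainbow paths. Here I would first try $c(e_k)=c^*(v_1v_2)$ (for $z=v_1$; analogously otherwise). I would then give each privileged chord $v_av_b$ the colour of a path edge adjacent to its endpoint $v_b$ inside the long cycle. The aim is that any path that starts at $w$, enters the cycle through a chord, and travels along $G_F$ must skip at least two path edges. That yields at most $\ell-2$ edges, and hence a contradiction via \Cref{lemma:edgenum}, or directly. Since $\ell\ge 30$, there is enough room to choose all these $f_j$ distinct and far from each other. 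The remaining work is checking the few end configurations, which I expect to be tedious but routine.
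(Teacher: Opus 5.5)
Your framework matches the paper's. You put the privileged edges first via Proposition~\ref{prop:proA}, then kill the first non-privileged edge $e_k=zw$ with Lemma~\ref{lemma:edgenum} for $t=0$: take $S=\partial_{G_F}(z)\cup\partial_{G_F}(w)$ when $z,w$ both have degree $2$ in $G_F$, and $S=E(F)\setminus\{f_1,\dots,f_k\}$ with $c(e_j)=c^*(f_j)$ otherwise. But the explicit colouring in the remaining cases, which is what actually proves the lemma, is deferred, and several things you do state are wrong.

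\textbf{Privileged set.} You need $\Pi$ exactly, not a $9$-edge superset. Deleting $v_{a-1}v_a$ (if $a\ge 3$) or $v_bv_{b+1}$ (if $b\le\ell-1$) leaves the chord $v_av_b$ in a component with at most $\ell-1$ vertices. Deleting any cycle edge $v_jv_{j+1}$ with $3\le j\le\ell-2$ kills $v_2v_\ell$. So $\Pi=\{v_1v_{\ell+1},v_1v_\ell,v_2v_{\ell+1}\}$ and $k=4$. Your cycle-edge choice of $f$ fails for chords $v_av_{a+2}$: the path $v_1\cdots v_av_{a+2}\cdots v_{\ell+1}$ is a $P_\ell$ through the chord that avoids both cycle edges at its ends.

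\textbf{Case coverage.} Since $t=0$, Lemma~\ref{lemma:edgenum} needs $r^k_c(S)\le|S|-2$. So ``just $\partial_{G_F}(z)$'' never suffices. Also $\partial_{G_F}(z)\cup\partial_{G_F}(w)$ only gives $|S|-1$ when an endpoint is $v_1$ or $v_{\ell+1}$, so your case list misses the chords $v_1v_j$, $3\le j\le\ell-1$, and their mirrors. Nor is an isolated $w$ with a middle $z=v_i$ easy. With $c(wv_i)=c^*(v_iv_{i+1})$, the path $wv_iv_{i-1}\cdots v_1v_{\ell+1}v_\ell\cdots v_{i+1}$ survives unless the privileged colours cut it.

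\textbf{The main obstacle.} The colouring you propose does not work. ``The colour of the cycle edge at $v_b$'' assigns $c^*(v_\ell v_{\ell+1})$ to both $v_1v_{\ell+1}$ and $v_2v_{\ell+1}$, which $c$ must colour differently. More fundamentally, for $z=v_1$ the path $wv_1v_{\ell+1}v_2v_3\cdots v_{\ell-2}$ is a $P_\ell$ through $wv_1$. It is rainbow unless one of $c(wv_1)$, $c(v_1v_{\ell+1})$, $c(v_2v_{\ell+1})$ copies an edge of $v_2v_3\cdots v_{\ell-2}$. So with $c(wv_1)=c^*(v_1v_2)$, no colours placed near $v_{\ell+1}$ can succeed.

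\textbf{The missing idea.} It is the paper's block colouring; your $v_{5+j}v_{6+j}$ suggestion in Step~2 is its germ, but you never fix $c(e_k)$ relative to the block. Let $i$ be the relevant index: the far end of a chord $v_1v_i$, or $z=v_i$ for an isolated $w$.
\begin{itemize}
\item If $i\ge 12$, colour $e_1,e_2,e_3$ like the consecutive edges $v_4v_5,v_5v_6,v_6v_7$ and $e_4$ like $v_9v_{10}$.
\item If $3\le i\le 11$, use $v_{16}v_{17},v_{17}v_{18},v_{18}v_{19}$ for $e_1,e_2,e_3$ and $v_{13}v_{14}$ for $e_4$.
\item Otherwise, apply the reflection $v_j\leftrightarrow v_{\ell+2-j}$, which fixes $\Pi$.
\end{itemize}
A rainbow path through $e_4$ that uses some $e_j$ cannot use its colour twin. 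It therefore cannot enter the segment ending at $v_9$ (resp.\ starting at $v_{14}$), and misses two $S$-edges there. A path using no $e_j$ is confined to one side of the edge coloured like $e_4$. It then misses two consecutive $S$-edges, e.g.\ $v_1v_2,v_2v_3$, or $v_{10}v_{11},v_{11}v_{12}$, or $v_iv_{i+1},v_{i+1}v_{i+2}$. Either way $r^4_c(S)\le|S|-2$, and Lemma~\ref{lemma:edgenum} gives the contradiction.
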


\begin{proof}
Suppose   that $F\in  \rw(n,P_\ell)$ and let $
e_1,e_2,\ldots,e_{e(\bar{F})} 
$ 
be a weakly $P_\ell$-rainbow saturated  ordering of $E(\bar F)$. By Proposition \ref{prop:proA},  we may assume that
$e_1=v_1v_{\ell+1}$, $e_2=v_1v_{\ell}$
and $e_3=v_2v_{\ell+1}$. Write $e_4=zw$.

If $z,w\in V(G_F)\setminus R_1$, define
$S=\partial_{G_F}(z)\cup \partial_{G_F}(w)$. We have $r^4(S)\leq 2$ and $|S|=4$. By Lemma~\ref{lemma:edgenum}, 
 the given ordering is not weakly $P_\ell$-rainbow saturated, a contradiction.

 Next, suppose that $e_4\in E(\bar{G}_F)$ with $z=v_1$ and $w\notin  \{v_\ell, v_{\ell+1}\}$ or  $e_4\in E(F)\setminus E(G_F)$ with $z=v_i\in  V(G_F)$. In the following, we will pick three edges $\{f_1,f_2,f_3,f_4\}\subseteq E(F)$ depending on specific cases so that we can set $c(e_i)=c^*(f_i)$. Then let $R=\{ f_1,f_2,f_3,f_4\}$, $S=E(F)\setminus R$.   
 
 If $i\ge 12$, then let  $c(zw)=c^*(v_{9}v_{10})$, $c(v_1v_{\ell+1})=c^*(v_4v_5)$, $c(v_1v_{\ell})=c^*(v_{5 }v_{6})$,  and $c(v_2v_{\ell+1})=c^*(v_{6}v_{7})$.   To make $|E(P)\cap S|$ as large as possible, if $P$ contains $e_i$  for some $i\in [3]$, then $E(P)$ is disjoint with  $\{v_7v_8,v_8v_9\}\subseteq S$, a contradiction.  Suppose that $E(P)$ is disjoint with $\{e_1,e_2,e_3\}$. Again, $E(P)$ is disjoint with $\{v_{1 }v_{ 2},v_{2}v_{3}\}\subseteq S$ or $\{v_{10}v_{11},v_{11}v_{12}\}\subseteq S$ or $\{v_{i}v_{i+1},v_{i+1}v_{i+2}\}\subseteq S$, a contradiction. Thus, assume that $3\le i\le 11$. Then by setting  $c(zw)=c^*(v_{13}v_{14})$, $c(v_1v_{\ell+1})=c^*(v_{16}v_{17})$, $c(v_1v_{\ell})=c^*(v_{17 }v_{18})$,  and $c(v_2v_{\ell+1})=c^*(v_{18}v_{19})$, the same argument works.

Therefore, $F\notin \rw(n,P_\ell)$. 
\end{proof}

\begin{lemma}\label{lemma:substar}
Let $F$ be a graph on $n$ vertices, where $n\ge \ell+1\ge 31$. If  $ G_F$ is the union of three internally vertex-disjoint  paths $Q_1,\; Q_2,\; Q_3,$  where they pairwise share the unique endpoint $x$, with $e(Q_i)\ge 1$ and $ e(G_F)=\ell$, then $F\notin  \rw(n,P_\ell)$. 
\end{lemma}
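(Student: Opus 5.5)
The plan is the same template as \Cref{lemma:subk4+} and \Cref{lemma:subedge}. Suppose $F\in\rw(n,P_\ell)$ with a weakly $P_\ell$-rainbow saturated ordering $e_1,e_2,\ldots$. By \Cref{prop:proA}, all privileged edges come first. Let $e_k=zw$ be the first non-privileged edge; we will show that no choice of $e_k$ survives.

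\textbf{Privileged edges.} Let $u_i$ be the leaf end of $Q_i$ and $a_i=e(Q_i)$, so $a_1+a_2+a_3=\ell$. Since $e(F)=\ell$ and a $P_\ell$ uses $\ell-1$ edges, an edge $e$ is privileged exactly when, for every $f\in E(F)$, the graph $F+e-f$ still has a $P_\ell$ through $e$. Otherwise we simply set $c(e)=c^*(f)$.
\begin{itemize}
\item Any added edge creates at most one cycle. Deleting a cut edge of $F+e$ that lies on every long path through $e$ kills all such paths.
\item From this I expect the only privileged edges to be of the form $u_iu_j$ with $a_k=1$. Then $F+u_iu_j$ is an $\ell$-cycle through $x$ with one pendant edge, and every edge deletion leaves a Hamiltonian-type path.
\item Possibly a few edges joining $u_i$ to a vertex near $x$ in the short leg are also privileged. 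This needs to be checked, but there are only $O(1)$ candidates and at most three legs of length $1$.
\end{itemize}
Hence $k-1\le 3$, and the privileged edges all lie among the leaves and the neighbours of $x$.

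\textbf{Killing $e_k$.} I split by the position of $z,w$, mirroring the bullet eliminations in \Cref{lemma:subk4+}.
\begin{itemize}
\item If $z$ is an internal vertex of some leg (degree $2$) other than the ones touched by privileged edges, take $S=\partial_{G_F}(z)\cup\partial_{G_F}(x)$ (or just $\partial_{G_F}(z)$ when $w$ is also internal). Here $r^k(S)\le|S|-2$, since a path through $e_k$ can use at most one edge at $z$ besides $e_k$, and at most two of the three edges at $x$. \Cref{lemma:edgenum} applies because $|E(F)\setminus S|\ge\ell-5\ge k$.
\item If $z=x$, take $S=\partial_{G_F}(x)$, so $r^k(S)\le1$.
\item The remaining cases are $z\in\{u_1,u_2,u_3\}$, or $w$ an isolated vertex with $z$ a leaf or a neighbour of $x$. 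Here I follow Case 4 of \Cref{lemma:subk4+}: explicitly assign $c(e_j)=c^*(f_j)$ for $j\le k$, with the $f_j$ chosen in the middle of a long leg. Since $\ell\ge30$, some leg has length at least $10$, which leaves enough room.
\end{itemize}
With this assignment, any rainbow path through $e_k$ either avoids all privileged edges or is forced to miss two consecutive edges of $S=E(F)\setminus\{f_j\}$. In both situations $r_c^k(S)\le|S|-2$, and \Cref{lemma:edgenum} gives the contradiction.

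\textbf{Main obstacle.} The hardest step is the case where some leg has length $1$ (or two legs are short). There the privileged edges turn $G_F$ into a near-Hamiltonian cycle plus a pendant edge, which is close to the structure of \Cref{lemma:subk4+}, and the rainbow paths through $e_k$ have many routes around that cycle. My first attempt is to reduce this case to the computations already done in \Cref{lemma:subk4+}: treat the $\ell$-cycle $u_i\cdots x\cdots u_ju_i$ with the pendant edge as the graph $C$ there, and reuse those colour assignments. If that fails, I would spend the colours of $e_1,\dots,e_k$ on two consecutive edges of that cycle far from $z$, so that every path through $e_k$ loses two edges of $S$.
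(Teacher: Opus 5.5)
You follow the paper's template: privileged edges first via Proposition~\ref{prop:proA}, then kill the first non-privileged edge with Lemma~\ref{lemma:edgenum} or an explicit colouring. However, the steps you leave as ``expected'' or ``to be checked'' are exactly the content of the paper's proof, and two of your concrete mechanisms fail.

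\textbf{The structural reduction is not proved.} The paper first shows that some leg has length $1$. If all legs have at least two edges, then $e_1$ (which must be privileged) already fails:
\begin{itemize}
\item If both ends of $e_1$ lie in $G_F$, colour $e_1$ with the edge $g_j$ at $x$ of a leg $Q_j$ whose vertices other than $x$ avoid $e_1$. Every rainbow path through $e_1$ then misses at least two vertices.
\item If $e_1=zw$ with $w$ isolated, colour it with $e_z$ when $z$ is a leaf, or with $g_i$ when $z$ is interior to $Q_i$. When $z=x$, the path reaches only one leg.
\end{itemize}
Carrying your cut-edge idea through shows that the privileged edges are exactly the leaf--leaf edges $u_iu_j$ with $a_k=1$. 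There are none ``near $x$'', and $k-1\le 2$. So up to symmetry $G_F=v_1\cdots v_\ell+v_rv'_r$ and $e_1=v_1v_\ell$, with additionally $e_2=v_1v'_{\ell-1}$ when $r=\ell-1$.

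\textbf{Your first elimination bullet is false for $z\in N(x)$.} Then $xz\in\partial(z)\cap\partial(x)$, so $|S|=4$, and ``one edge at $z$ plus two at $x$'' only gives $3=|S|-1$. For example, with $z=v_{r+1}$ and $w=v_{r+5}$, the path $v'_rv_rv_{r-1}\cdots v_1v_\ell\cdots v_{r+5}v_{r+1}v_{r+2}v_{r+3}$ uses three edges of $S$. The parenthetical choice $S=\partial_{G_F}(z)$ also only gives $r^k(S)\le 1=|S|-1$; you need $\partial_{G_F}(z)\cup\partial_{G_F}(w)$. This is why the paper excludes $N[x]$ from that bullet.

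\textbf{The deferred cases are the main gap.} These are $z\in N(x)$ with $w$ a leaf or isolated, non-privileged leaf--leaf edges such as $v_1v'_r$, and the whole case $r=\ell-1$, where $F_2$ is a theta graph. Your justification ``either avoids all privileged edges or misses two consecutive edges of $S$'' does not yield $r^k_c(S)\le |S|-2$, and your fallback is actually wrong. Take $3\le r\le\frac{\ell+1}{2}$, $e_1=v_1v_\ell$, $e_2=v_{r+1}v'_r$, and set $c(e_1)=c^*(v_jv_{j+1})$, $c(e_2)=c^*(v_{j+1}v_{j+2})$ for any $r+2\le j\le \ell-2$. Then $v_{j+2}\cdots v_\ell v_1\cdots v_r v'_r v_{r+1}\cdots v_j$ is a rainbow $P_\ell$ through $e_2$ that skips only $v_{j+1}$, so $r^2_c(S)\ge |S|-1$.

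The reason is that forbidding $f_1,f_2$ removes edges outside $S$, and two adjacent forbidden edges cost only one vertex. The colours must instead go on edges that are far apart, chosen according to $z$ and $w$, so that the long arc is cut into short pieces. Here the paper takes $c(e_1)=c^*(v_{\ell-1}v_\ell)$ and $c(e_2)=c^*(v_{\ell-6}v_{\ell-5})$. Each configuration then has to be verified separately.

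Your primary idea of reusing Lemma~\ref{lemma:subk4+} does not transfer either. After adding the privileged edges you have an $\ell$-cycle plus a pendant edge on $\ell+1$ vertices, not that lemma's $\ell$-vertex graph. Also, Lemma~\ref{lemma:edgenum} counts edges of the original $F$, not of $F_{k-1}$.
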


\begin{proof}
 Similarly, let $
e_1,e_2,\ldots,e_{e(\bar{F})}
$
be a  weakly $P_\ell$-rainbow saturated  ordering  of $E(\bar F)$.
 Without loss of generality, assume that  $ n(Q_3) \leq  n (Q_2) \leq  n(Q_1) $. For $i=1,2,3$, let $g_i$ be the edge of $Q_i$ that is incident with $x$.   
 
 First, we claim that $ G_F$ is obtained from a path $v_1v_2\cdots v_{\ell}$ by adding a pendant  edge $v_rv_r'$ where $2\leq r\leq\ell-1$. We prove this by showing that  $ n(Q_3)=2$. Assume to the contrary  that $ n(Q_3)\geq 3$. 
If $e_1\in E(\bar{G}_F)$, choose a path $Q_j$ from $Q_1,Q_2,Q_3$ that is internally disjoint from $e_1$. Let $c(e_1)=c^*(g_j)$. Then any rainbow path $P$ in $G_F+ e_1 $ containing $e_1$ contains no edge of $Q_j$.  Since $ n(G_F)=\ell+1$ and $ n(Q_i)\geq3$ for each $i\in[3]$, it holds that $ n(P)\leq \ell-1$, a contradiction.   Thus, suppose that  $e_1=zw$, where $w\in V(F)\setminus V(G_F)$ and $z\in V(G_F)$. If $d(z)=1$, by setting $c(zw)=c^*(e_z)$, the longest rainbow path containing $zw$ in $F_1$ is $zw$. If $z=x$, then the longest rainbow path containing $zw$ is $Q_1\cup\{zw\}$. If $d(z)=2$, then there exists a unique $Q_i\in\{Q_1,Q_2,Q_3\}$ such that $z\in V(Q_i)$. Let $c(zw)=c^*(g_i)$. Then the order  of the longest rainbow path containing $zw$ is at most $ n(Q_i)\leq \ell-3$. These are all contradicting the assumption that the given ordering is weakly $P_\ell$-rainbow saturated. Therefore, $ n(Q_3)=2$.  

By symmetry, it suffices to assume that $3\leq r\leq\ell-2$  or $r=\ell-1$.  We finish the proof by considering the two cases of $r$.

\noindent\textbf{Case 1.}  $3\leq r\leq  \frac{\ell+1}{2}$. 

Without loss of generality,   assume that $e_1=v_1v_{\ell}$. Write $e_2=zw$ with $z\in V(G_F)$.
\begin{itemize}
    \item If $z\notin \{v_1,v_{r-1},v_r,v_r', v_{r+1},v_\ell\}$, then let $S=\partial_{G_F}(z)\cup\partial_{G_F}(v_r)$. We have $|S|=5$ and $r^2(S)\leq3$.

    \item If $z=v_r$, then let $S=\partial_{G_F}(v_r)$. We have $|S|=3$ and $r^2(S)\leq 1$.    
     
    \item If   $z\in \{v_{r-1}, v_{r+1}\}$ and $w\neq v'_r$, then let $S=\partial_F(v_1)\cup \partial_F(v_{r-1})\cup\partial_F(v_r)\cup \partial_F(v_{r+1})\cup \partial_F(v_{\ell}) \cup \partial_F(w)$  and set $c(\{e_1,e_2\})\subseteq c^*(E(F)\setminus S)$. Let $P$ be a rainbow path in $F_2$ containing $e_2=zw$. If $P$ does not contain $v_1v_2$ or $v_{\ell-1}v_\ell$, then $|E(P)\cap (\partial_F(v_1)\cup \partial_F(v_r)\cup\partial_F(v_\ell) ) |\le |\partial_F(v_1)\cup \partial_F(v_r)\cup\partial_F(v_\ell)|-2$. Thus, assume that $P$ contains $v_1v_2$ and $v_{\ell-1}v_\ell$. If $w\notin V(G_F)$, since $v_1v_2,v_{\ell-1}v_\ell\in E(P)$ and $P$ is rainbow, we know that $|E(P)\cap(\partial_F(v_{r-1})\cup\partial_F(v_r)\cup \partial_F(v_{r+1}))|\le |\partial_F(v_{r-1})\cup\partial_F(v_r)\cup \partial_F(v_{r+1})|-2$.   Therefore, $w\in V(G_F)\setminus \{v'_r\}$. If $d(w)=2$, then $|E(P)\cap (\partial(w)\cup \partial(z))|\le |\partial(w)\cup \partial(z)|-2$. Thus, $w\in \{v_1,v_\ell\}$.  Then either $P$ does not contain $v_rv'_r$ or $ v_1v_2$ or $v_{\ell-1}v_\ell$, a contradiction. 
\end{itemize}
In each case, Lemma~\ref{lemma:edgenum} implies that the given edge ordering cannot be  weakly $P_\ell$-rainbow saturated. As before,  we will pick three edges $\{f_1,f_2 \}\subseteq E(F)$ depending on specific cases so that we can set $c(e_i)=c^*(f_i)$. Then let $R=\{ f_1,f_2 \}$, $S=E(F)\setminus R$ , $P$ be a rainbow path in $F_2$ containing $e_2$.  Since $v_1$ and $v_\ell$  are analogous, it remains to consider the following cases, where we always set $c(v_1v_\ell)=c^*(v_{\ell-1}v_{\ell})$.  

  \begin{itemize}
 \item   $z\in \{v_{r-1}, v_{r+1}\}$ and $w=v_r'$.  Let $c(e_2)=c^*(v_{\ell-6}v_{\ell-5})$. 
 \item   $z\in \{v_{1}, v_{\ell}\}$ and $w=v_r'$.  Let $c(e_2)=c^*(v_{\ell-6}v_{\ell-5})$. 
      \item  $w\in V(F)\setminus V(G_F)$ and $z=v_1$ or $z=v'_r$. Then set $c(zw)=c^*(v_{1}v_2)$ when $z=v_1$ and $c(zw)=c^*(v_{r}v'_{r})$  when $z=v'_{r}$. 
  \end{itemize}
In each case, Lemma~\ref{lemma:edgenum} implies that the given edge ordering cannot be   weakly $P_\ell$-rainbow saturated. 

      




\medskip
\noindent\textbf{Case 2.} $r=\ell-1$. 

Without loss of generality, we may assume that $e_1=v_1v_{\ell}$ and $e_2=v_1v_{\ell-1}'$. Write $e_3=zw$ with $z\in V(G_F)$. 
\begin{itemize}
    \item If $z\notin \{v_1,v_{\ell-2},v_{\ell-1},v_{\ell-1}',v_\ell\}$, then let $S=\partial_{G_F}(z)\cup\partial_{G_F}(v_{\ell-1})$. We have $|S|=5$ and $r^3(S)\leq3$.

    \item If $z=v_{\ell-1}$, then let $S=\partial_{G_F}(v_{\ell-1})$. We have $|S|=3$ and $r^3(S)\leq 1$.
     
\end{itemize}
  In each case, Lemma~\ref{lemma:edgenum} implies that the given edge ordering cannot be   weakly $P_\ell$-rainbow saturated.  By symmetry of $v'_{\ell-1}$ and $v_{\ell}$, it remains to consider the following cases.  Let $ R$, $S $ and $P$ be defined as before. 

    \begin{itemize}
       \item $z=v_1$. Then set  $c(e_1=v_1v_\ell)=c^*(v_{\ell-1} {v_\ell})$, $c(e_2=v_1v_{\ell-1}')=c^*(v_{\ell-1} v_{\ell-1}')$ and $c(v_1w)=c^*(v_{i}v_{i+1})$ for $w=v_i\in V(G_F)\setminus \{v_{\ell-1}\}$; $c(v_1w)=c^*(v_{7}v_8)$ for $w=  v_{\ell-1} $;  while $c(v_1w)=c^*(v_{1}v_{2} )$ for $w\notin V(G_F)$.   
        \item $z=v_{\ell-2}$ and $w\neq v_1$. Then set  $c(e_1=v_1v_\ell)=c^*(v_{\ell-1} {v_\ell})$, $c(e_2=v_1v_{\ell-1}')=c^*(v_{\ell-1} v_{\ell-1}')$ and $c(zw)=c^*(v_{k-1}v_{k})$ for $w=v_k\in V(G_F)\setminus \{v_1,v'_{\ell-1},v_\ell\}$; $c(zw)=c^*(v_{\ell-3}v_{\ell-2})$ for $w\in \{v'_{\ell-1},v_\ell\} \cup (V(F)\setminus  V(G_F))$.   

      \item  $w=v'_{\ell-1}$ and $z=v_{\ell}$. Then set $c(zw)=c^*(v_{4}v_{5})$, $c(v_1v_{\ell-1}')=c^*(v_2v_3)$ and $c(v_1v_\ell)=c^*(v_{6}v_7)$.

      \item  $w\in V(F)\setminus V(G_F)$ and $z=v_\ell$. Then set $c(zw)=c^*(v_{\ell-1}v_\ell)$, $c(v_1v_{\ell-1}')=c^*(v_{\ell-1}v_{\ell-1}')$ and $c(v_1v_\ell)=c^*(v_1v_2)$.
  \end{itemize}
 The inequality $r^3_c(S)\le |S|-2$ follows in each case. 
 Hence,  Lemma~\ref{lemma:edgenum} implies that the given edge ordering cannot be   weakly $P_\ell$-rainbow saturated, a contradiction.
\end{proof}

Now,  we finish the proof in  \Cref{lemma:conn2}  and \Cref{lem:disct=1}. 
\begin{lemma} \label{lemma:conn2}
Let $F$ be a graph on $n$ vertices, where $n\ge \ell+1\ge 31$. If $ G_F$ is a connected graph with $\ell$ edges, then $F\notin  \rw(n,P_\ell)$.
\end{lemma}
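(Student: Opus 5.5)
We argue by contradiction. Suppose $F\in\rw(n,P_\ell)$ with $G_F$ connected and $e(G_F)=\ell$, and fix a weakly $P_\ell$-rainbow saturated ordering $e_1,e_2,\ldots$ in which, by \Cref{prop:proA}, all privileged edges come first. Since $e(G_F)=\ell$, we have $n(G_F)\in\{\ell,\ell+1\}$, and we also need $n(G_F)\ge \ell-1$. We split according to whether $G_F$ is a tree on $\ell+1$ vertices or a unicyclic graph on $\ell$ vertices. The smaller case $n(G_F)\le \ell-1$ is handled like the first paragraph of \Cref{lemma:l1e}: the first added edge has color $c^*(f)$ for a suitable edge $f$, and either $r^1(E(F))\le \ell-2$ directly or we take $S=\partial_{G_F}(z)$. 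In every case the tool is \Cref{lemma:edgenum} with $t=0$. For an early $e_i$ we exhibit $S$ with $|S|\ge 2$, $|E(F)\setminus S|\ge i$ and $r^i(S)\le |S|-2$. Typically $S$ is a union of stars $\partial_{G_F}(v)$ at vertices of degree at least $2$ or at least $3$, because a path uses at most two edges at each vertex.

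\textbf{Tree case, $n(G_F)=\ell+1$.} A $P_\ell$ in $G_F+e_1$ must use all but one edge of $G_F$, so the structure is forced. If $G_F$ has a vertex $x$ of degree at least $4$, take $S=\partial_{G_F}(x)$: then $r^1(S)\le 2\le |S|-2$, a contradiction, provided $|E(F)\setminus S|\ge1$; if instead $G_F$ is a star, then $r^1(E(F))$ is tiny. If $G_F$ has two vertices of degree $3$, take $S$ to be the union of their stars. Then $|S|\in\{5,6\}$ and $r^1(S)\le 4$ (or $\le 3$ if they are adjacent), again a contradiction. So $G_F$ is either a path, which is \Cref{lemma:subedge}, or a spider with exactly one vertex of degree $3$, which is \Cref{lemma:substar}.

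\textbf{Unicyclic case, $n(G_F)=\ell$.} Let $C$ be the cycle and let the trees hang off it. If some vertex has degree at least $4$, or two vertices have degree at least $3$, we use the star-union choice of $S$ as above. The exception is when degree-$3$ vertices are unavoidable with large pendant structure; there we count carefully, using that a path meets a cycle in a subpath, so it misses at least one cycle edge. What remains is degree sequence $(2,\ldots,2)$, or $(1,2,\ldots,2,3)$. The first case, $G_F=C_\ell$, has no $P_\ell$ containing a new edge unless that edge is a chord. For a chord $e_1=uv$, we set $c(e_1)$ equal to the color of a cycle edge adjacent to $u$ on the shorter side; then a rainbow $P_\ell$ through the chord would have to traverse both arcs, which is impossible, so $r^1(E(F))\le\ell-2$. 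For a pendant new edge, we choose the color of a cycle edge at its endpoint, which gives at most $\ell-1$ vertices. The second case is exactly \Cref{lemma:strseq} combined with \Cref{lemma:subk4+}.

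The main obstacle is the reduction in the unicyclic case when two degree-$3$ vertices are adjacent or lie on the cycle. Then $r^1$ of their star union is only $|S|-1$, not $|S|-2$. My first attempt there is to add a third star, such as a leaf's neighbor or a vertex of the cycle, to $S$, exploiting $\ell\ge30$ to keep $|E(F)\setminus S|\ge1$. If that fails, I would assign colors explicitly: $c(e_1)=c^*(f)$ for a cycle edge $f$ chosen so that any long rainbow path must both close around the cycle and reach two pendant branches, which loses at least two edges.
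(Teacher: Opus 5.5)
There is a genuine gap. The unicyclic case $n(G_F)=\ell$ is not proved, and the fallback you propose for it cannot work. Take $R_1=\{u,v\}$ and $R_3=\{x,y\}$, where $xy$ is an edge of the cycle $Z$ of $G_F$ and $x\cdots u$, $y\cdots v$ are pendant paths. Then $G_F+uv$ is the Hamiltonian cycle $(Z-xy)\cup(x\cdots u\,v\cdots y)$ plus the chord $xy$. Whatever colour $uv$ receives, delete from this Hamiltonian cycle the edge carrying that colour (or, if there is none, any edge other than $uv$); the result is a rainbow $P_\ell$ through $uv$. So $uv$ is privileged. No choice of $c(e_1)$, and no $S$ at step $1$, can refute an ordering that starts with it; the contradiction has to come from $e_2$.

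The idea you are missing is the one the paper uses. Because $n(G_F)=\ell$, every $P_\ell$ through a chord $e_1$ spans $G_F$, so it uses $e_x$ for every leaf $x$ not incident with $e_1$; setting $c(e_1)=c^*(e_x)$ kills it. This settles $|R_1|\ge 3$ at once. For a pendant $e_1=zw$, use $c(e_1)=c^*(e_z)$ if $z\in R_1$, and $S=\partial_{G_F}(R_1)$ otherwise. In the adjacent case above, the same trick (together with the stars below, applied to a pendant $e_1$) leaves only $e_1=uv$. The paper then refutes $e_2=zw$ with $z\notin R_1$ by \Cref{lemma:edgenum} with $i=2$, taking $S=\partial_{G_F}(z)$ if $z\in R_3$, $S=\partial_{G_F}(x)\cup\partial_{G_F}(z)$ if $z\notin R_3\cup N(x)$, and $S=\partial_{G_F}(x)\cup\partial_{G_F}(y)$ if $xyz$ is a triangle.

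Several steps you do spell out are also wrong.

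For $G_F=C_\ell=v_1\cdots v_\ell v_1$ and a chord $v_1v_j$ with $3\le j\le\frac{\ell+1}{2}$, colouring it like $v_1v_2$ fails, since $v_2v_3\cdots v_jv_1v_\ell\cdots v_{j+1}$ is a rainbow $P_\ell$. Colouring a pendant edge $v_1w$ like $v_1v_2$ also fails, since $wv_1v_\ell v_{\ell-1}\cdots v_3$ is a rainbow $P_\ell$. You need a cycle edge lying on every $P_\ell$ through the new edge; the paper uses $v_{\ell-2}v_{\ell-1}$ for the chord and $v_3v_4$ for the pendant edge.

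In the tree case, the bound $r^1(\partial_{G_F}(x)\cup\partial_{G_F}(y))\le 3$ for adjacent degree-$3$ vertices is false when $e_1$ is a chord whose cycle runs through $xy$. Let $b,c$ be leaves at $x,y$, let $e_1=bc$, and let $xa_1\cdots a_k$, $yd_1\cdots d_m$ be the other legs. The path $a_{k-1}\cdots a_1xbcyd_1\cdots d_m$ has order $\ell$ and contains four of the five edges of $S$. It is rainbow unless $c(e_1)$ is the colour of one of its edges. So this H-shaped tree is left open and needs an explicit colouring of $e_1$.

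Finally, for $n(G_F)=\ell-1$ you cannot simply copy \Cref{lemma:l1e}. With $t=0$ you need $r^1(S)\le|S|-2$, but $S=\partial_{G_F}(z)$ gives only $r^1(S)\le 1=|S|-1$ when $d_{G_F}(z)=2$. In that situation you must colour $e_1$ like a leaf edge. If $G_F$ has no leaf, then $|R_3|=2$, and you must use unions of stars at $z$ and at the degree-$3$ vertices, as the paper does.
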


\begin{proof}
As before, suppose that  $e_1,\dots, e_{e(\bar{F})}$ is a weakly $P_\ell$-rainbow saturated ordering of $E(\bar{F})$.  

Firstly, we assert that  $\Delta(G_F)\leq 3$. Assume that there is a vertex $x$
with $d(x)\geq4$.   If $d(x)=\ell$, then $ G_F$ is the star $S_{\ell+1}$ and  $r^1(E(F))\leq 2$. Thus, assume that $d(x)\leq\ell-1$.  Set $S=\partial_{G_F}(x)$, then 
$r^1(S)\leq 2\leq |S|-2$ and $|E(F)\setminus S|\geq 1$.
By Lemma~\ref{lemma:edgenum}, the ordering is not 
weakly $P_\ell$-rainbow saturated, a contradiction.

Since $ e(G_F)=\ell$ and  $F$ is weakly $P_\ell$-rainbow saturated, it holds that $  n(G_F)\in \{\ell-1,\ell,\ell+1\}$. Then it suffices to consider the following cases. 

\medskip
\noindent\textbf{Case 1.} 
 $  n(G_F)= \ell-1$. 
 
Write $e_1=zw$. Since $  n(G_F)= \ell-1$, we assume that $z\in V(G_F)$ and $w\in V(F)\backslash V(G_F)$.  

If $R_1\neq \emptyset$, let $x\in R_1$. Now set $c(zw)=c^*(e_x)$. If $z=x$
then the longest rainbow path in $F_1$ containing $e_1$ is $zw$.  If $z\neq x$ then any rainbow path containing $zw$
has  order   at most $\ell-1$ since it cannot contain $x$. 
Thus, assume that $R_1=\emptyset$.  Consequently,  $|R_3|=2$, say $R_3=\{x,y\}$.  It suffices to consider the following cases.

\begin{itemize}
	\item $z\in R_3$. Set $S=\partial_{G_F}(z)$, then $|S|=3$ and $r^1(S)\leq 1$.
\item $d_{G_F}(z)=2$ and $z\notin N(x)$. Set $S=\partial_{G_F}(z)\cup \partial_{G_F}(x)$, then
	$|S|=5$ and $r^1(S)\leq 3$.
\item $x$ and $y$ are not adjacent. Set $S=\partial_{G_F}(x)\cup \partial_{G_F}(y)$, then
	$|S|=6$ and $r^1(S)\leq 4$.
\item $x$ and $y$ are adjacent and $z\in N(x)\cap N(y)$. Set $S=\partial_{G_F}(x)\cup \partial_{G_F}(y)$, then
	$|S|=5$ and $r^1(S)\leq 3$.
\end{itemize}
Among all the situations above,  we  have $r^1(S)\leq |S|-2$ and $|E(F)\setminus S|\geq 1$.
So by Lemma~\ref{lemma:edgenum}, the ordering is not  
weakly $P_\ell$-rainbow saturated, a contradiction.

\medskip
\noindent\textbf{Case 2.}  $  n(G_F)=\ell$. 

We proceed by considering $|R_1|$.

Suppose first that $|R_1|\geq 3$. If $e_1 \in E(\bar{G}_F)$, then there is an $x\in R_1$ 
not incident with $e_1$. Set $c(e_1)=c^*(e_x)$. Then any rainbow path containing $e_1$
has  order  at most $\ell-1$, since it cannot contain $x$. Thus, assume that
$e_1=zw$, where $z\in V(G_F)$ and $w\in V(F)\backslash V(G_F)$. If $z\in R_1$ then  
by setting $c(e_1)=c^*(e_z)$,  the longest rainbow path in $F_1$  is $wz$.  If $z\notin R_1$, let $S$  be the set of edges 
incident with $R_1$.  Then it is enough to assume that  $\ell-1\geq|S|=|R_1|\geq 3$ and so $r^1(S)=1$. However, 
$|E(F)\setminus S|\geq 1$. By Lemma~\ref{lemma:edgenum}, this leads to a contradiction.

Now, suppose that $|R_1|=2$. Since $\Delta(G_F)\leq 3$ and $e(G_F)=\ell$,  we have that $|R_3|=2$. Suppose that $R_1=\{u,v\}$ and 
$R_3=\{x,y\}$.  
If $x$ and $y$ are not adjacent, set $S=\partial_{G_F}(x)\cup\partial_{G_F}(y)$. 
We have $|S|=6$ and $r^1(S)\leq 4$. Again, by Lemma~\ref{lemma:edgenum}, we are done. 
Thus, assume that $x$ and $y$ are adjacent and  $e_1=uv$. Write $e_2=zw$ with $z\in V(G_F)$.
 
 Since $e_1=uv$ and $R_1=\{u,v\}$, there is a vertex incident with $e_2$ that is not in $R_1$. Without loss of generality, we can assume that $z\notin R_1$. Then it suffices to consider the following cases. 
\begin{itemize}
	\item $z\in R_3$. Let $S=\partial_{G_F}(z)$.
		Then $|S|=3$ and $r^2(S)\leq 1$.
	\item $z\notin R_3\cup N(x)$. Let $S=\partial_{G_F}(x)\cup \partial_{G_F}(z)$.
		Then $|S|=5$ and $r^2(S)\leq 3$.
	\item $z\in N(x)\cap N(y)$. Note that now $xyzx$ is a cycle of order $3$.  Set $S=\partial_{G_F}(x)\cup \partial_{G_F}(y)$, then
	$|S|=5$ and $r^2(S)\leq 3$.
\end{itemize}
By Lemma~\ref{lemma:edgenum}, we are done.



Then suppose that $|R_1|=|R_3|=1$. It follows that  the degree sequence of $G_F$ is $(1,2,2,\ldots,3)$. By Lemma~\ref{lemma:strseq}, $ G_F$ is obtained  from the path
$v_1v_2\cdots v_{\ell}$ by adding $v_iv_{\ell}$ for some $2\leq i\leq \ell-2$. By Lemma~\ref{lemma:subk4+}, $F\notin \rw(n,P_\ell)$, a contradiction.

 Finally, suppose that $|R_1|=0$. Then $ G_F$ is isomorphic to the cycle $C_{\ell}=v_1v_2\cdots v_{\ell}v_1$. Suppose first that $e_1=v_iv_j\in E(\bar{G}_F)$. Then without loss of generality, say that $i=1$ and $3\le j\le \frac{\ell+1}{2}$. Then  by setting $c(v_1v_j)=c^*(v_{\ell-1}v_{\ell-2})$, there is no rainbow path of order  $\ell$ containing $e_1$ in $F+ \{e_1\}$.     Hence, by symmetry, assume that 
$e_1=zw$ where $w\in V(F)\backslash V(G_F)$ and  $z=v_1$. Then by  setting 
$c(e_1)=c^*(v_3v_4)$,  the  longest rainbow path in $F_1$ containing $e_1$ is $v_4v_5\cdots v_\ell v_1w$ which has order  $\ell-1$, a contradiction. 

\medskip
\noindent\textbf{Case 3.} 
 $  n(G_F)=\ell+1$. 

Note that  $ G_F$ is a tree. 
If $4\leq|R_1|\leq \ell-1$, then set $S=\partial_{G_F}(R_1)$. Since
$r^1(S)\leq 2\leq |S|-2$ and $|E(F)\setminus S|\geq 1$,
the ordering is not 
weakly $P_\ell$-rainbow saturated    by Lemma~\ref{lemma:edgenum}.  If $|R_1|=3$, by Lemma~\ref{lemma:substar}, $F\notin  \rw(n,P_\ell)$. If $|R_1|=2$, then $ G_F$ is the path $v_1v_2\cdots v_{\ell+1}$. By Lemma~\ref{lemma:subedge}, $F\notin  \rw(n,P_\ell)$.
\end{proof}

\begin{lemma}\label{lem:disct=1}
Let $F$ be a graph on $n$ vertices, where $n\ge \ell+1\ge 31$. If $ G_F$ is disconnected with $\ell$ edges, then $F\notin  \rw(n,P_\ell)$.
\end{lemma}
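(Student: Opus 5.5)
We argue by contradiction, following the template of \Cref{lem:ct=1}. Suppose $F\in\rw(n,P_\ell)$ with $G_F$ disconnected, $e(G_F)=\ell$, and let $e_1,e_2,\ldots$ be a weakly $P_\ell$-rainbow saturated ordering, with all privileged edges first by \Cref{prop:proA}. Let $G_1,\dots,G_k$ ($k\ge2$) be the components of $G_F$, ordered so that $e(G_1)\ge e(G_2)\ge\cdots$.

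\textbf{Step 1.} First we bound the size of a single component. If some $G_j$ has $e(G_j)\le \ell-2$ and $e_1$ lies inside $G_j$, or joins $G_j$ to an isolated vertex, then we colour $e_1$ with a colour of $E(G_j)$. A rainbow path through $e_1$ then stays inside $V(G_j)\cup\{w\}$ and uses at most $e(G_j)$ edges, which is at most $\ell-2$ edges. Two situations remain.

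(a) $G_1$ has $\ell-1$ edges and $G_2$ is a single edge $uv$.

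(b) Every component has at most $\ell-2$ edges, and $e_1=zw$ joins two components $G_1\ni z$ and $G_2\ni w$.

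\textbf{Step 2: case (b).} This is the analogue of \Cref{lem:ct=1}, now with one spare edge.
\begin{itemize}
\item If $d_{G_F}(z)\ge 3$, take $S=\partial_{G_F}(z)$. Then $r^1(S)\le 1\le |S|-2$, and \Cref{lemma:edgenum} with $t=0$ gives a contradiction. The same holds for $w$.
\item If $d(z)=d(w)=2$, take $S=\partial(z)\cup\partial(w)$. Then $|S|=4$ and $r^1(S)\le 2$.
\item If $d(z)=1$, colour $e_1$ by $c^*(e_z)$. The path is then confined to $G_2$ plus $z$, so it has at most $e(G_2)+1$ edges. This is too few unless $e(G_2)\ge \ell-2$, which forces $G_1$ to have $\le 2$ edges. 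Symmetrically, we may then colour $e_1$ by an edge of $G_1$ at $z$ if $d(w)=1$.
\item The residual configuration has one component with $\ell-2$ edges and one with $2$ edges. Here we choose $S=\partial(z)\cup\partial(w)$ or colour $e_1$ by the edge at the degree-2 endpoint. In either choice the path loses at least two edges.
\end{itemize}

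\textbf{Step 3: case (a).} Here $G_2=uv$ and $G_1$ is a connected graph with $\ell-1$ edges.

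If $e_1$ is inside $G_1$, or joins $G_1$ to an isolated vertex, colour it by $c^*(uv)$ or by an edge of $G_1$. The path then has at most $\ell-1$ edges counting $e_1$, but loses one of them, so it is too short. Hence we may assume $e_1$ joins $G_1$ to $\{u,v\}$, say $e_1=zu$. Colouring $e_1$ by $c^*(uv)$ leaves at most $e(G_1)+1=\ell$ edges available. A rainbow $P_\ell$ would then need to use essentially all of $G_1$ plus $e_1$, so $G_1+zu$ must have a spanning-like Hamiltonian path ending at $u$.

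This forces strong structure. We redo the arguments of \Cref{lemma:l1e}: high-degree vertices are excluded via $S=\partial(x)$, giving $r^1(S)\le 2\le|S|-1$. This pins $G_1$ down to a tree that is a path with $z$ as an endpoint, or to small unicyclic cases. We then use $c(e_1)=c^*(f)$ for a suitable edge $f$ of $G_1$ and keep $uv$ available. The path through $zu$ then splits $G_1$ or misses $f$.

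\textbf{Main obstacle.} The hardest part is this last subcase, $G_1=v_1\cdots v_\ell$ with $e_1=v_1u$ or $e_1=v_\ell u$. Here the union $G_1\cup\{e_1,uv\}$ is a path on $\ell+2$ vertices, and $e_1$ may genuinely lie in two rainbow $P_\ell$'s. We must then analyse $e_2$ (and perhaps $e_3$), mirroring \Cref{lemma:subedge}: we fix explicit colours $c(e_j)=c^*(f_j)$ on well-separated edges $f_j$ of the long path. We use $\ell\ge30$ to place these edges far from the endpoints of $e_2$, and show that every candidate path misses two edges of $S=E(F)\setminus\{f_j\}$. It is this casework on the location of $e_2$ that I expect to be the most delicate.
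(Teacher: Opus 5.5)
\textbf{There is a genuine gap in Step~3.} Below, ``colour $e_1$ like $f$'' means $c(e_1)=c^*(f)$.

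Your claim that $e_1$ cannot lie inside $G_1$ rests on an off-by-one count. $G_1+e_1$ has $e(G_1)+1=\ell$ edges, so after discarding the edge that shares $e_1$'s colour, $\ell-1$ edges remain, which is exactly what a $P_\ell$ needs. The claim is in fact false. If $G_1$ is the path $v_1v_2\cdots v_\ell$ and $e_1=v_1v_\ell$, then $G_1+e_1=C_\ell$, and whatever colour $e_1$ gets, deleting the cycle edge of that colour leaves a rainbow $P_\ell$ through $e_1$. So $e_1$ is privileged, and by your own use of \Cref{prop:proA} it would be placed first. The same miscount undermines your one-line dismissal of an edge from $G_1$ to an isolated vertex; that subcase is still excludable, but it needs a real argument (e.g.\ the split on $d(z)$ below).

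The configuration $G_F=P_\ell\cup K_2$ with $e_1$ closing the path is exactly what the paper's proof must deal with, and it cannot be settled from $e_1$ alone. First, $e_1$ inside a component forces this configuration. When $e_1$ is coloured like some $f\in E(G_1)$, a rainbow $P_\ell$ through $e_1$ must use every edge of $G_1+e_1-f$. Hence $G_1$ is a tree, and all its edges lie on the cycle of $G_1+e_1$. Next, one must examine $e_2$:
\begin{itemize}
\item A chord of the cycle, or an edge from the cycle to an isolated vertex, is killed by colouring $e_2$ like a cycle edge far from its endpoints.
\item An edge missing the cycle lies in a component of order at most $3$.
\end{itemize}
So $e_2=v_iq$ with $G_2=qq'$, and by the reflection $v_k\mapsto v_{\ell+1-k}$ we may take $i\le\frac{\ell+1}{2}$. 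The paper's colouring $c(e_1)=c^*(v_{\ell-1}v_\ell)$, $c(e_2)=c^*(v_iv_{i+1})$ then confines every rainbow path through $e_2$ to $q'qv_iv_{i-1}\cdots v_1v_\ell$, which has order $i+3<\ell$.

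\textbf{The subcase you single out as the main obstacle is immediate.} If $G_1=v_1\cdots v_\ell$ and $e_1=v_1u$, colour $e_1$ like $v_1v_2$. Then $v_1$ is an endpoint of every rainbow path through $e_1$, so any such path lies in $v_1uv$.

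Your tool for the $e_1=zu$ case is also misapplied. The inequality $r^1(S)\le 2\le|S|-1$ is the $t=1$ threshold of \Cref{lemma:edgenum}. Here $e(F)=\ell$, so $t=0$ and you need $r^1(S)\le|S|-2$; a degree-$3$ vertex away from $z$ is not excluded this way. The workable route, which is the paper's, is to split on $d(z)$:
\begin{itemize}
\item If $d(z)=1$, colour $e_1$ like $e_z$.
\item If $d(z)\ge3$, take $S=\partial_{G_F}(z)$, where $r^1(S)\le1\le|S|-2$.
\item If $d(z)=2$, colouring $e_1$ like $uv$ forces a path $Q=v_1\cdots v_{\ell-1}$ in $G_1$ with $v_1=z$. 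So $G_1=Q+zx$ for the other neighbour $x$ of $z$. Then colour $e_1$ like $zv_2$ if $x\notin V(Q)$, like $v_iv_{i+1}$ if $x=v_i$ with $i\le\ell-3$, and like $v_{\ell-3}v_{\ell-2}$ otherwise. Each choice leaves no rainbow $P_\ell$ through $e_1$.
\end{itemize}

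\textbf{Your case (b) is essentially right, but its last bullet is unnecessary.} Once $d(z)=1$ forces $e(G_2)=\ell-2$ and a path on $\ell-1$ vertices starting at $w$, $G_2$ is that path, so $d(w)=1$. Colouring $e_1$ like $e_w$ then confines the path to $G_1\cup\{w\}$, so it has at most $3$ edges.
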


\begin{proof}
Suppose that $F\in  \rw(n,P_\ell)$. 
By the same argument as at the beginning of the proof of Lemma~\ref{lemma:conn2}, we have
$\Delta(F)\leq 3$.

Suppose $e_1=zw$. If $z$ and $w$ are  contained in the same component $H$ of $ G_F$, then $|E(H)|=\ell-1$ and $ n(H)=\ell$. By Lemma~\ref{lemma:l1e}, $H=v_1\cdots v_{\ell}$ is a path and $e_1=v_1v_\ell$. Now, suppose that $e_2=pq$. Again by  Lemma~\ref{lemma:l1e} and symmetry, assume that $p=v_i$ with $1\le i\le \frac{\ell+1}{2}$ and $q\in V(H')$ where $H$ and $H'$ are the only two components of $ G_F$. We may assume that $E(H')=\{qq'\}$.  Then by setting $c(e_1)=c^*(v_{\ell-1}v_\ell)$ and $c(e_2)=c^*(v_{i}v_{i+1})$,  the longest rainbow path in $F_2$ containing $e_2$ is $q'qv_iv_{i-1}\cdots v_1v_\ell$, which has order  less than  $\ell $. Therefore, $e_1=zw$ must be an edge 
connecting two components of $ G_F$. 
 
Let $G_1$ and $G_2$ be two components of $ G_F$
such that $z\in V(G_1)$ and $w\in V(G_2)$ with $ n(G_1)\ge  n(G_2)$.   If $d_{G_F}(z),d_{G_F}(w)\geq 2$, then  set
$S=\partial_{G_F}(z)\cup \partial_{G_F}(w)$. Since $4\leq |S|\leq 6$ and $r^1(S)\leq 2$, Lemma~\ref{lemma:edgenum} gives a contradiction. 
If $d_{G_F}(z)=1$, then let $c(e_1)=c^*(e_z)$. It follows that there is no rainbow $P_\ell$ in $F+ \{e_1\}$ containing $e_1$ since $ n(G_1)\ge  n(G_2)$. 
Thus, assume that
$d_{G_F}(z)=2$ and $d_{G_F}(w)=1$. Since there is a  rainbow $P_{\ell}$ containing $e_1$ when we set $c(e_1)=c^*(e_w)$, it holds that $e(G_1)\ge \ell-2$ and the length of the longest path starting at $z$ in $G_1$ is at least $\ell-2$. Consequently, there is a path $Q=v_1v_2\cdots v_{\ell-1}$ in $G_1$ where $v_1=z$. Note that if $e(G_1)=\ell-2$, then $G_1=Q$, which implies that $d_{G_F}(z)=d_{G_F}(v_1)=1$, contradicting the assumption that $d_{G_F}(z)=2$.  Thus, it is sufficient to assume that $e(G_1)=\ell-1$ and $e(G_2)=1$.
Let $x\neq v_2$ be another vertex   adjacent to $z$. If $x\notin V(Q)$, then by setting $c(zw)=c^*(zv_2)$, the  longest rainbow path containing $zw$ 
in $F+ \{zw\}$ is $w'wzx$, where $\{ww'\}=E(G_2)$, a contradiction.  If $x\in V(Q)$,
that is, $x=v_i$ for some $3\leq i\leq \ell-1$. If $i\leq \ell-3$, set $c(zw)=c^*(v_iv_{i+1})$.  If $\ell-2\leq i\leq \ell-1$, set $c(zw)=c^*(v_{\ell-3}v_{\ell-2})$. In both cases, the length of  the longest rainbow path  containing $zw$ 
in $F_1$ is at most $\ell-2$, a contradiction. 
\end{proof}

By the above results, \Cref{main:path} holds.   
\begin{proof}[Proof of \Cref{main:path}]
It follows from \Cref{lem:ulbdd}, \Cref{lemma:l1e}, \Cref{lem:ct=1}, \Cref{lemma:conn2}, and \Cref{lem:disct=1}. 
\end{proof}

\section{Stars}\label{sec:star}
In this section, we determine  the weak rainbow saturation number of stars and show  that the corresponding extremal graphs are not
unique. For $\ell\ge 6$, recall that  $S_\ell$ denotes  the star on $\ell$ vertices.  

\begin{THMMAIN2} 
    Let $n$ and $\ell$ be integers with $n\ge 3\ell^2$ and  $\ell\ge 6$. Then $\s(n,S_{\ell})=\binom{\ell}{2}-1$. Furthermore, the extremal graphs are not unique. 
\end{THMMAIN2}
\begin{proof}
 
We first show that $\s(n,S_\ell)\ge \binom{\ell}{2}-1$. Suppose for contradiction that $F\in \rw(n,S_\ell)$ with $e(F)< \binom{\ell}{2}-1$. 
Let $V(G_F)=\{v_1,\ldots,v_t\}$ with $d(v_1)\ge \cdots\ge d(v_p)\ge \ell-1>d(v_{p+1})\ge \cdots\ge d(v_q)\ge \ell-2>d(v_{q+1})\ge \cdots \ge d(v_t)\ge 1>d(v_{t+1})=\cdots=d(v_n)=0$.

Note that $t\ge \ell-1$. We first show that the following fact holds. 
\begin{claim}\label{claim:v-pq}
The following holds. 
\begin{enumerate}[label=(\arabic*)] 
    \item \label{clp} $p\le \ell-3$. 
    \item \label{clqq} $ q\le \ell$.  
\end{enumerate}
\end{claim}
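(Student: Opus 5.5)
The claim follows from counting edges incident to the high-degree vertices; no saturation ordering is needed. Throughout, I use only the standing assumption $e(F)\le \binom{\ell}{2}-2$ and the degree ordering fixed above.

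For \ref{clp}, suppose for contradiction that $p\ge \ell-2$, and let $A=\{v_1,\ldots,v_{\ell-2}\}$. Each vertex of $A$ has degree at least $\ell-1$. The number of edges of $F$ incident with $A$ equals $\sum_{v\in A}d(v)-e(F[A])$. This is because an edge inside $A$ is counted twice in the degree sum, while an edge from $A$ to $V(F)\setminus A$ is counted once. Since $e(F[A])\le\binom{\ell-2}{2}$, we get
\[
e(F)\ \ge\ (\ell-2)(\ell-1)-\binom{\ell-2}{2}=\frac{(\ell-2)(2\ell-2-\ell+3)}{2}=\frac{(\ell-2)(\ell+1)}{2}=\binom{\ell}{2}-1 .
\]
This contradicts $e(F)<\binom{\ell}{2}-1$, so $p\le \ell-3$.

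For \ref{clqq}, suppose for contradiction that $q\ge \ell+1$. Then $v_1,\ldots,v_{\ell+1}$ all have degree at least $\ell-2$. By the handshake lemma,
\[
e(F)\ \ge\ \frac12\sum_{i=1}^{\ell+1}d(v_i)\ \ge\ \frac{(\ell+1)(\ell-2)}{2}=\binom{\ell}{2}-1 ,
\]
which is again a contradiction, so $q\le\ell$.

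There is no real obstacle here. The only step needing care is subtracting the edges inside $A$ in part \ref{clp}, since a bare degree sum would only give $e(F)\ge(\ell-2)(\ell-1)/2$, which is too weak. Both bounds land exactly at $\binom{\ell}{2}-1$, so the strict inequality $e(F)<\binom{\ell}{2}-1$ is exactly what is needed.
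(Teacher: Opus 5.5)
Your proof is correct and follows the paper's argument. For (1), both count the edges incident with the high-degree vertices as $\sum d(v)-e(F[\cdot])$ and bound the inner edges by a binomial coefficient; for (2), both use the handshake bound $e(F)\ge \frac{(\ell+1)(\ell-2)}{2}$. Restricting to exactly $\ell-2$ of the vertices $v_1,\dots,v_p$ is a slightly cleaner way to justify the paper's step, which writes the count over all $p$ vertices and then plugs in $\ell-2$ (valid only implicitly, by monotonicity of the number of edges incident with a set).
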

\begin{clproof}
For  \ref{clp},  suppose that $p\ge \ell-2$. Then 
 \begin{align*}
    &e(F)\ge e(F[\{v_1,\cdots,v_p \}])+\sum_{v\in \{v_1,\cdots,v_p \}} (d(v)-d_{F[\{v_1,\cdots,v_p \}]}(v))\\
 &=\sum_{v\in \{v_1,\cdots,v_p \}} d(v) -  e(F[\{v_1,\cdots,v_p \}])\\
 &\ge (\ell-2)(\ell-1)-\frac{(\ell-2)(\ell-3)}{2}\\
 &=\frac{\ell(\ell-1)}{2}-1, 
\end{align*}   a contradiction.
Thus, $p\le \ell-3$. 

 For \ref{clqq}, suppose that $q\ge \ell+1$. Then $ 
        e(F) \ge  \frac{(\ell+1)(\ell-2)}{2} 
         \ge \frac{\ell(\ell-1)}{2}-1$, a contradiction.
\end{clproof}

Observe that the following  edges are privileged  in $\bar{F}$ and so could be added to $F$ first. 
\begin{enumerate}
    \item[Step 1]  $v_iu$ where $i\in [p]$, $u\in V(F)\setminus \{v_i\}$. 

Since $d_{F+v_iu}(v_i)=\ell$, there is a rainbow $S_\ell$ rooted at $v_i$ containing $v_iu$.

\item[Step 2] $v_iv_j$ where $i,j\in [q]\setminus[p]$ with $i\neq j$. 

It suffices to consider the following. 
If $c(v_iv_j)=c^*(v_iv_k)$ where $v_k\in N(v_i)$, then there is a rainbow $S_\ell$ consisting of $\{v_iv_j\}\cup \{v_jv:v\in N(v_j) \}$. The same argument holds when $c(v_iv_j)=c^*(v_jv_k)$ where $v_k\in N(v_j)$. 
\end{enumerate}

Let $F'$ be the graph  obtained from $F$ by adding all the edges in Step 1 and  Step 2. Then by Claim~\ref{claim:v-pq}, we have that  $F'\neq K_n$ since $n\ge 3\ell^2$. Let $v_iv_j\in E(\bar{F}')$ be the first edge that we could add to $F'$ with $i<j$. 
Then the following claim holds.

\begin{claim}\label{cldegG'}
It holds that $i\in [p+1,q]$ with $d_{F'}(v_i)-d(v_i)\ge \ell-2$ and  $j\in [n]\setminus [q]$.  
\end{claim}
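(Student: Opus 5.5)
The plan is to read off the structure of $F'$ and then show that any edge violating one of the three conditions admits an explicit assignment of $c$ with no rainbow $S_\ell$ through it. In $F'$ every vertex of $\{v_1,\dots,v_p\}$ is already adjacent to all other vertices (Step 1), and $\{v_{p+1},\dots,v_q\}$ induces a clique (Step 2). So for a missing edge $v_iv_j$ with $i<j$ we get $i>p$. We cannot have both $i,j\le q$, so $j>q$. It remains to show that $i\le q$ and that $v_i$ has gained at least $\ell-2$ new edges.

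The key observation concerns which edges of $F'$ at a vertex $v$ are new. For $v=v_k$ with $k>q$, they are only the Step-1 edges $v_mv$ with $m\le p$, so there are at most $p\le \ell-3$ of them by Claim~\ref{claim:v-pq}\ref{clp}. For $v=v_k$ with $p<k\le q$, they are the Step-1 edges from $v_1,\dots,v_p$ and the Step-2 edges inside $\{v_{p+1},\dots,v_q\}$, so there are $q-1$ of them.

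Any rainbow $S_\ell$ containing $e=v_iv_j$ in $F'+e$ is centred at $v_i$ or $v_j$, since a leaf edge cannot lie in a star whose center is elsewhere. We now choose an adversarial injective coloring. Each Step-1 edge $v_mv$ ($m\le p$) is given, where possible, the color of a distinct original edge of $F$ at its non-$v_1,\dots,v_p$ endpoint $v$. Stars centred at $v_1,\dots,v_p$ are irrelevant for $e$, so only that endpoint matters. Since the new edges are pairwise distinctly colored but may reuse colors of $F$, this is legitimate, and one color of $F$ is used at most once.

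After this matching, the number of distinct colors at a vertex $v$ in $F'+e$ is at most $\max\{d_F(v),\,a(v)\}$ plus a correction for $e$. Here $a(v)$ counts the new edges at $v$ including $e$, and we give $c(e)$ the color of an unused $F$-edge at $v$ whenever one exists. A rainbow star at $v$ needs $\ell-1$ distinct colors. Since $v\notin\{v_1,\dots,v_p\}$ gives $d_F(v)\le \ell-2$, a star at $v$ is possible only if $a(v)\ge \ell-1$, i.e.\ $v$ had at least $\ell-2$ new edges in $F'$.

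For the endpoint $v_j$ ($j>q$) the count is at most $p+1\le \ell-2$, so no star centres at $v_j$. If $i>q$ too, the same bound kills stars at $v_i$, a contradiction; hence $i\in[p+1,q]$. Then a star at $v_i$ forces $d_{F'}(v_i)-d(v_i)\ge \ell-2$.

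The main obstacle is the last step: the single color $c(e)$ must simultaneously neutralize both endpoints, and the Step-2 edges at $v_i$ have two endpoints in $[p+1,q]$, so they cannot be matched freely at both ends. I expect to resolve this as follows. Stars at $v_j$ are already excluded by counting alone, even allowing $e$ a fresh color, because $d_F(v_j)\le \ell-3$ and $v_j$ has at most $p\le\ell-3$ new edges. So $c(e)$ can be devoted to $v_i$, and the Step-2 edges only need matching at one endpoint, which a greedy assignment in order of the indices provides. I would double-check the boundary case $d_F(v_j)=\ell-3$ together with $p=\ell-3$, handling it by the matching of the Step-1 edges at $v_j$.
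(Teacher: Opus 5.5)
Your proposal is correct and follows the paper's argument. A star through $v_iv_j$ must be centred at an endpoint. Recolouring the new edges at that endpoint with colours of its original $F$-edges caps the number of distinct colours there at the larger of the numbers of old and new edges, plus one if $v_iv_j$ is left unmatched. This rules out centres outside $[q]$ and forces at least $\ell-2$ new edges at $v_i$.

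Two small corrections. First, a vertex $v_k$ with $p<k\le q$ gains \emph{at most} $q-1$ new edges, since its original neighbours in $\{v_1,\dots,v_q\}$ do not count; you never use this number, so nothing breaks. Second, your Step-2 ``obstacle'' is moot: you only need to match at $v_i$ and $v_j$, and there injectivity of $c$ is automatic because $\partial_F(v_i)\cap\partial_F(v_j)=\emptyset$ (as $v_iv_j\notin E(F)$). By contrast, a global matching at every vertex, as your remark that each colour of $F$ is used at most once suggests, could reuse the colour of an $F$-edge at both of its ends.
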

\begin{clproof}
Note that $i,j\notin [p]$ by Step  1. Additionally, for each $k\in [n]\setminus [q]$, it holds that $d_{F'}(v_k)-d(v_k)\le p\le \ell-3<\ell-2$. Let $S_1=\partial_F(v_k)$, and $S_2=\partial_{F'}(v_k)\setminus S_1$. Then $|S_1|=d(v_k)\le \ell-3$ and $|S_2|\le \ell-3$.   
For any $x\in V(F)$ with $xv_k\in E(\bar{F}')$, let $\alpha=\min\{|S_1|,|S_2|\}$. Then for each $f\in S_2'\subseteq S_2$ with $|S'_2|=\alpha$, there is $f'\in S_1$ such that for $f_1\neq f_2\in S_2'$, it holds that $f'_1\neq f'_2$. Let $c(f)=c^*(f')$ for each $f\in S'_2$. Since each rainbow $S_\ell$ rooted at $v_k$  containing $xv_k$ has at most one of $f\in S'_2$ and $f'\in S_1$ and $ |S_1\cup \{xv_k\}\cup S_2| -\alpha \le \ell-2$, there is no rainbow $S_\ell$ containing $xv_k$ in $F'+\{xv_k\}$ rooted at $v_k$.

Therefore, by Step 2, it holds that   $i\in [p+1,q]$ and $j\in [n]\setminus [q]$. Moreover,  the rainbow $S_\ell$s in $F'+ v_iv_j$ containing $v_iv_j$ must be rooted at $v_i$. Now, by taking $x:=v_j$, $v_k:=v_i$, $S_2:=S_2\cup \{v_iv_j\}$, since $|S_1|=\ell-2$,  we  can deduce  that $d_{F'}(v_i)-d(v_i)\ge  \ell-2$.
\end{clproof} 

  Let $$I=\{v_k:d_{F'}(v_k)-d(v_k)\ge \ell-2;k\in [p+1,q]\}.$$ 
  Since $F'\neq K_n$, we have that $I\neq\emptyset$. Consequently, by the proof of \Cref{prop:proA}, all the following edges could be added to $F'$, first.

\medskip

{\bf  Step 3.} $v_iv_j$ where $i\in I$ and $j\in [n]\setminus [q]$ with $i\neq j$.

\medskip

Directly, by  \ref{clqq}  and Claim~\ref{cldegG'}, we have that $q\in \{\ell-1,\ell\}$.  Now, let $v_iv_j$ be the first edge that we could add to $F$ such that $i,j\notin [q]$. Let $F''$ be the   graph obtained from $F$ by adding all the edges before $v_iv_j$.  Equivalently, $F''$ is the graph  obtained by adding all edges in Step  1--3.  
Notice that such an edge exists since $n\ge 3\ell^2$.  

Observe that by the same proof of Claim~\ref{cldegG'}, it holds that either $d_{F''}(v_i)-d(v_i)\ge \ell-2$ or $d_{F''}(v_j)-d(v_j)\ge \ell-2$. Since $i,j\notin [q]$, by Step 1, Step 3 and the requirement that $d_{F''}(v_i)-d(v_i)\ge \ell-2$ or $d_{F''}(v_j)-d(v_j)\ge \ell-2$, we have that $|I|\ge \ell-2-p$. Recall that $q\in \{\ell-1,\ell\}$.  It suffices to consider the following two cases.\\ 

\noindent {\bf Case 1. } $q=\ell-1$. 

Since $|I|\ge \ell-2-p$, by the definition of $I$, $\{v_{p+1},\ldots, v_q\}$ is an independent set of $F$.   Consequently, there is at most one vertex $ v$ in $\{v_{p+1},\ldots, v_q\}$ that has  neighbors in $\{v_1,\ldots, v_p\}$. Let $W=\{v,v_1,\cdots, v_p\}$ and $W'=\{v_{p+1},\ldots, v_q\}\setminus \{v\}$. Therefore,  
\begin{align*}
    e(F)&\ge e(F[W])+ \partial_F(W)+\partial_F(W')\\
    & =e(F[W])+\sum_{v\in W} (d(v)-d_{F[W]}(v))+\sum_{v\in W'} d(v) \\
    &=\sum_{v\in W\cup W'}d(v)-e(F[W])\\
    &\ge  (\ell-1)(\ell-2)-{{\ell-2}\choose 2} \\
   &\ge \frac{\ell(\ell-1)}{2}-1,
\end{align*} 
since $p+1\le \ell-2$ by \ref{clp}, a contradiction. \\

\noindent{\bf Case 2. } $q=\ell$. 

Similarly,  in the graph $F$  all but at most two vertices in $\{v_{p+1},\ldots, v_q\}$  have  at most one neighbor in $\{v_1,\ldots, v_q\}$. 
For simplicity, let $W=\{v_1,\cdots,v_q\}$, $W'=\{v_1,\ldots, v_p\}$, and $W''\subseteq \{v_{p+1},\cdots,v_q\}$ such that each vertex in $W''$ has at least  two neighbors in $W$. 
Then $e(F[W'\cup W''])\le {{p+2}\choose 2}$. Since each vertex in $W\setminus (W'\cup W'')$ has at most 1 neighbor in $W$, it holds that in $F[W]$, the number of edges with one endpoint in $W\setminus (W'\cup W'')$ is at most $\ell-p$. Thus, 
$$\begin{aligned}
    e(F[W])\le  {{p+2}\choose 2}+\ell-p. 
 \end{aligned}
 $$
 Therefore,
 \begin{align*}
    e(F)&\ge e(F[W])+\partial_F(W)\\
 &=e(F[W])+\sum_{v\in W} (d(v)-d_{F[W]}(v))\\
 &=\sum_{v\in W} d(v) -  e(F[W])\\
 &\ge p(\ell-1)+(\ell-p)(\ell-2) -  e(F[W]). 
 \\ &\ge p(\ell-1)+(\ell-p)(\ell-2)-\left({{p+2}\choose 2}+\ell-p\right)\\
 &=\ell^2-3\ell-\frac{1}{2}p^2+\frac{p}{2}-1\\
& \ge \frac{\ell(\ell-1)}{2}-1
\end{align*} 
where the last inequality holds by \ref{clp}, a contradiction. \\

Therefore, $\s(n,S_\ell)\ge \frac{\ell(\ell-1)}{2}-1$. We finish the proof by showing that $|\rw(n,S_\ell)|\ge 2$. Let $G$ be a graph with $V(G)=X\cup Y$ and $\ell+2\le |V(G)|\le 2(\ell-2)$ satisfying all the following.
\begin{itemize}
    \item $G[X]$ forms a clique of size $\ell-2$;
    \item $ Y $  is an independent set of $G$; 
    \item  $d(v)=\ell-1$ where $v\in X$; and
    \item each vertex in $Y$ has at least one neighbor in $X$. 
\end{itemize}
Then $e(G)=\frac{\ell(\ell-1)}{2}-1$. Moreover, the graph $G$ is not unique. 
\begin{claim}\label{clain:exgs}
   Let $F=G\cup (n-|V(G)|)K_1$. Then $F\in \rw(n,S_\ell)$.
\end{claim}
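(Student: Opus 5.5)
We prove the claim by writing down an explicit weakly $S_\ell$-rainbow saturated ordering of $E(\bar F)$. First we record the structure of $G$. Each $x\in X$ has $\ell-3$ neighbours inside the clique $G[X]$ and $d(x)=\ell-1$, so $x$ has exactly two neighbours in $Y$. This gives $e(G)=\binom{\ell-2}{2}+2(\ell-2)=\binom{\ell}{2}-1$, as stated. It also gives $|Y|\le 2(\ell-2)$ and $|V(G)|\ge \ell+2$. Write $Z=V(F)\setminus V(G)$ for the set of isolated vertices. Since $n\ge 3\ell^2$ and $|V(G)|\le 2(\ell-2)$, we have $|Z|\ge \ell$. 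As throughout, $c^*$ is rainbow on $E(F)$ and $c$ is injective on $E(\bar F)$. However, a colour $c(e)$ may coincide with a colour of $F$, and this is the only source of difficulty.

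\textbf{Step 1.} Add all edges $xu$ with $x\in X$ and $u\notin N_F[x]$, in any order. Each such edge is privileged. In $F+xu$ (and in any later graph) the vertex $x$ has at least $\ell$ incident edges. The only possible repeated colour among the edges of $F$ at $x$ together with $xu$ is $c(xu)$, which equals at most one colour $c^*(f)$ with $f\in\partial_F(x)$. Deleting that $f$ leaves at least $\ell-1$ edges at $x$ with distinct colours, all containing $xu$, that is, a rainbow $S_\ell$ centred at $x$. After Step 1, every vertex of $X$ is adjacent to every other vertex.

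\textbf{Step 2.} Add all edges $zw$ with $z\in Z$ and $w\in (V(F)\setminus X)\setminus\{z\}$. We use the star centred at $z$. Its edges are the $\ell-2$ edges from $z$ to $X$, all added in Step 1, together with $zw$. All $\ell-1$ of these edges lie in $E(\bar F)$, so they have pairwise distinct colours under $c$. Hence they form a rainbow $S_\ell$ containing $zw$, whatever the colouring is.

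\textbf{Step 3.} Add all edges $yy'$ with distinct $y,y'\in Y$ and $yy'\notin E(F)$ (in fact $Y$ is independent, so every such pair qualifies). Here centring at a vertex of $Y$ via $X$ is dangerous. The reason is that the edges from $y$ to $X$ mix original edges, coloured by $c^*$, with added edges, and an added colour could equal an original one. So instead we centre at $y$ and use $\ell-2$ of the edges from $y$ to $Z$, all added in Step 2, together with $yy'$. All of these lie in $E(\bar F)$, so they are rainbow, and $|Z|\ge\ell-2$ ensures enough such edges exist.

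After Steps 1--3 every pair of vertices is adjacent, so this ordering covers all of $E(\bar F)$ and $F\in\rw(n,S_\ell)$. Combined with the lower bound, this gives $\s(n,S_\ell)=\binom{\ell}{2}-1$. Non-uniqueness follows because the graph $G$ is not unique: for $\ell\ge 6$, different choices of $|Y|$ in the allowed range, and of how the $2(\ell-2)$ edges between $X$ and $Y$ are distributed, give non-isomorphic graphs.

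The main obstacle is the possible coincidence between colours of added edges and colours of $F$. It is handled by making sure that every star used after Step 1 consists only of added edges, and the ordering of Steps 1--3 is chosen precisely to make this possible. The only other point to check is Step 1, where one repeated colour is absorbed by the spare edge at $x$.
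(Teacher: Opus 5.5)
Your proof is correct and follows the paper's argument. You first add all edges at $X$; these are privileged because each $x\in X$ has degree $\ell-1$. You then add all edges at the isolated vertices, using their $\ell-2$ new edges to $X$. Finally you add the edges inside $Y$, using the $\ell-2$ new edges to $Z$. Rainbowness holds because every star after the first step consists solely of added edges, which is the same reasoning the paper uses with $d_{F^*}(v)-d(v)$ and $d_{F^{**}}(v)-d(v)$.

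One minor point: $|V(G)|\ge \ell+2$ is a hypothesis on $G$, not a consequence of the structure (the structure only forces $|Y|\ge 2$). Your argument never uses it, so nothing is affected.
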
 
\begin{clproof}
Note that we first add all the Step 1 edges to $F$.  Denote the resulted  graph by $F^*$. Then $d_{F^*}(v)-d(v)=\ell-2$ for each $v\in V(F)\setminus V(G)$. Now, we can add all the edges in $\bar{F}$ to $F$   of the form $uv$ where $u \in V(F)\setminus V(G)$ and $v\in V(F)$. Let $F^{**}$ be the resulting graph. 
Then  for each vertex $v$ in $Y$, we have that $d_{F^{**}}(v)-d(v)\ge \ell-2$ since $n\ge 3\ell^2$. Thus, we can add  all the edges which have both endpoints in $Y$ and so we obtained the $K_n$.  
\end{clproof}
Therefore, the conclusion holds. 
\end{proof}

\begin{remark}
We have shown that $\ell+1=\s(n,P_\ell)<\s(n,S_\ell)={\ell \choose 2}-1$ for large enough $n$ and $\ell$. 
 It is not known   whether any tree $T$ on $\ell$ vertices satisfies $\s(n,P_\ell)\le \s(n,T)\le \s(n,S_{\ell})$ for some sufficiently large $n$.  Additionally, 
when $\ell\leq 8$, an extremal graph in $\rw(n,P_\ell)$ is not necessarily unique. For example, both $P_3$ and $2K_2$ are weakly  $P_3$-rainbow saturated. When $\ell\geq 30$, although we have constructed an extremal graph, it remains unknown whether it is unique.
\end{remark}

\section{Cycles}\label{sec:cycle}
In this section, we  obtain an 
upper bound for weak saturation numbers of cycles, whose  leading coefficient is strictly smaller than $\frac32$ for large enough $n$, by giving an explicit construction. 
\begin{THMMAIN3}
    For $\ell\ge 4$ and $n\ge 10\ell$, we have that 
    $$
   n+\frac{n}{6\ell}\le  \s(n,C_\ell)\le\frac{n-4\ell}{\ell-1}  \ell +\binom{6\ell}{2}  
    $$
\end{THMMAIN3}
\begin{proof}
For the lower bound, let $F\in \rw(n,C_\ell)$. We assert that $F$ must be connected and $\delta(F)\ge 2$. Suppose not, let $u\in V(F)$ with $d(u)=\delta(F)\le 1$. Let $e=uv\in E(\bar{F})$ be the first edge in $E(\bar{F})$ with an endpoint $u$.
If $d(u)=0$, then there is no cycle containing $uv$. Now suppose that $d(u)=1$ and let $e_u$ be the unique edge incident with $u$. If we set $c(e)=c^*(e_u)$, then there is no rainbow $C_\ell$ containing $e$, a contradiction. If $F$ is a disjoint union of cycles, then let $e=uv$ be the first edge we added that joins two distinct cycles. Consequently, there is no rainbow cycle contains $uv$, a contradiction. 
Thus, $\delta(F)\ge 2$ and $F$ is connected.   Then we consider a path whose internal vertices all have
degree 2 in $F$. Let $P=w_0w_1\cdots w_tw_{t+1}$ be such a path, i.e., $d(w_i)=2$ for each $i\in [t]$ and $d(w_0),d(w_{t+1})\ge 3$. Now, we claim that $t\le 2\ell-1$. Suppose that $t=2\ell$.  Let $e=xw_i\in E(\bar{F})$ be the first edge in $E(\bar{F})$ with an endpoint in $\{w_1,\ldots,w_t\}$. By symmetry, it suffices to consider that $i\in [\ell]$. Then   when $c(xw_i)=c^*(w_{i-1}w_i)$, there is no rainbow $C_\ell $ containing $xw_i$, a contradiction. 
  Let $n_{\ge3}$ be the number of degree at least 3 vertices in $F$. Let $F^*$ be the (multi)graph obtained from $F$ by contracting all the vertices of degree 2.    Then \begin{align*}
      e(F^*)&\ge  \max\left\{\frac{3}{2}n_{\ge 3}, \frac{n-n_{\ge 3}}{2\ell-1}\right\}, \text{~and ~so~} \\
      e(F)&=(n-n_{\ge 3})+ e(F^*)\ge n +\frac{n}{6\ell}.
  \end{align*}

We now consider the upper bound, which is obtained by an
explicit construction. Let $$ t = \lfloor \frac{n - 4\ell}{2\ell-2} \rfloor   \text{ and }  q= n- (2\ell-2)t.$$  Then $4\ell\le q\le 6\ell-2$. 
 We define a graph $F$ as the one-point union of a complete graph $K_q$ and $t$ copies $ H_1, H_2, \dots, H_t $ of a given graph $H$ at $v$, where $$\begin{aligned}V(H) &= \{v, w_1, \dots, w_{2\ell-2}\};\\  E(H) &= \{ v w_1, v w_2,   v w_{2\ell-2} \} \cup \{ w_i w_{i+1} : 1 \le i \le 2\ell-3 \}.
 \end{aligned}$$
 Then $e(F)=2t\ell+{q\choose 2}$ when $\ell\ge 4$. 
Equivalently, let $V(K_q)=\{v=u_1,\dots, u_q\}$. Then $$V(F)=V(K_q)\cup \bigcup^{t}_{i=1}V(H_i) \text{ and } E(F)=E(K_q)\cup  \bigcup^{t}_{i=1}E(H_i),$$
 where $V(K_q)\cap V(H_i)= V(H_i)\cap V(H_j)=\{v\}$ for $i\neq j\in [t]$. 

   For each $ a \in [t] $, let $ w_i^a $ denote the copy of $ w_i $ in $ H_a $. Then $ w_i^a \in V(H_a) $. By writing $ w_i $ without a superscript, we mean that the operation is performed on all copies $ H_1, H_2, \dots, H_t $.     Since $K_q$ is a complete graph and $ n(K_q) \ge 4\ell$, the following fact holds.  
    \begin{claim}
        \label{cl:fobedges}
       Let $K_p\subseteq K_q$ with $p=2\ell$ and $ X \subseteq E(K_q) $ with $ |X| \le 3 $. Then for any two distinct vertices $ x, y \in V(K_p) $, there exists an $ (x,y) $-path of order $ k $ in $ K_p - X $ where $3\le k \le \ell-1$.
    \end{claim}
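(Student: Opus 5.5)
The plan is a direct construction inside the complete graph on the $2\ell$ vertices of $K_p$, after first discarding the forbidden edges in $X$. Let $x\neq y$ be given. Since $|X|\le 3$, at most three edges of $K_p$ are missing, so at most $6$ vertices of $K_p$ are incident with an edge of $X$. Because $p=2\ell\ge 8$, the graph $K_p-X$ is still extremely dense: each vertex has degree at least $p-1-3\ge 2\ell-4$.

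We build the path greedily, one internal vertex at a time.

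1. Let $V_0=V(K_p)\setminus\{x,y\}$. This set has $2\ell-2$ vertices. Call $z\in V_0$ \emph{good} if neither $xz$ nor $zy$ lies in $X$. Each edge of $X$ spoils at most one vertex in this way, so at least $2\ell-5\ge 3$ vertices are good. Picking one good $z$ gives the path $xzy$ of order $3$, which settles $k=3$.

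2. For general $k$ with $4\le k\le \ell-1$, we need distinct internal vertices $z_1,\dots,z_{k-2}$ from $V_0$ so that every edge of the path $xz_1\cdots z_{k-2}y$ avoids $X$. Choose them in order. When choosing $z_j$ with $j<k-2$, we must avoid the vertices already used (at most $\ell-3$ of them) and the neighbours of $z_{j-1}$ through $X$ (at most $3$). That leaves at least $2\ell-2-(\ell-3)-3=\ell-2\ge 2$ candidates.

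3. The last vertex $z_{k-2}$ must avoid $X$-edges both to $z_{k-3}$ and to $y$. The count is then at least $2\ell-2-(\ell-4)-3-3=\ell-4$, which could be $0$ when $\ell=4$.

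Step 3 is the only delicate point. To get room there, we plan the last vertex in advance. At the start, fix a vertex $z^*\in V_0$ with $z^*y\notin X$. There are at least $2\ell-2-3$ such vertices, so one exists, and we reserve it as $z_{k-2}$. Then during the greedy choice of $z_{k-3}$ we additionally require $z_{k-3}z^*\notin X$. This is at most $3$ extra exclusions. With $z^*$ reserved, the remaining count is still positive because $2\ell-2-(\ell-3)-1-6\ge \ell-4$.

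For $\ell=4$ the bound above gives $0$, so we handle the small case directly: only $k=3$ is allowed there, and step 1 already covers it. For $\ell\ge5$ the counts are positive.

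The main obstacle is therefore just the bookkeeping on the endpoint constraints in step 3. The cleanest fix is to observe that $|X|\le 3$ edges cover at most $6$ vertices. Choosing all the internal vertices from the at least $2\ell-8$ vertices of $V_0$ not incident with $X$ removes every constraint, except that $x$ or $y$ themselves may be incident with $X$. That remaining issue is handled by picking $z_1$ and $z_{k-2}$ among these clean vertices. This works whenever $2\ell-8\ge k-2$, i.e. whenever $\ell\ge 5$ since $k-2\le\ell-3$. For $\ell=4$ it gives at least $1$ clean vertex when $k=3$, which is all that is needed. Thus in every case an $(x,y)$-path of each order $k\in[3,\ell-1]$ exists in $K_p-X$.
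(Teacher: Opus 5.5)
Your proof is correct and is essentially the paper's. For $\ell\ge 5$ you take all internal vertices outside the at most six vertices spanned by $X$, which is possible since $2\ell-8\ge \ell-3$; for $\ell=4$, where only $k=3$ occurs, you take a single middle vertex $z$ with $xz,zy\notin X$, exactly as in your step~1. Your plain greedy in steps~2--3 also works, because its final count $\ell-4$ is already positive for $\ell\ge 5$.

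Two side remarks are false and should be deleted, though nothing rests on them. First, $2\ell-2-(\ell-3)-1-6$ equals $\ell-6$, not something $\ge \ell-4$; the reservation of $z^*$ is unnecessary anyway. Second, for $\ell=4$ there need not be any clean vertex at all: $2\ell-8=0$, and $X$ may be a perfect matching on $V_0$. So the case $\ell=4$ must rest on step~1, not on clean vertices.
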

    \begin{clproof}
      Without loss of generality, assume that $|X|=3$, $k=\ell-1$.  Since $n(X)\le 6$ and $q\ge 2\ell$, if $\ell\ge 5$, we are done. Thus, assume that $\ell=4$. Then $k=3$. For $ x, y \in V(K_p) $, since $|X|\le 3$ and $p\ge 8$, there exists $z\in V(K_p)\setminus \{x,y\}$ such that $\{zx,zy\}\cap X=\emptyset$. 
    \end{clproof}
    
It is sufficient to show that $F$ is weakly   $C_\ell$-rainbow saturated.  
To make the  proof readable, we divide the proof into two cases  according to
whether $\ell\ge 5$ or $\ell=4$. 

\medskip

\noindent{\bf Case 1.} $\ell\ge 5$.

 We add edges in $\bar{F}$ to $F$ according to the following order. When adding some edge $e$ to $F$, without loss of generality, it suffices to assume that $c(e)\in c^*(E({F}))$. 

    \begin{enumerate}[label=(S1.\arabic*)]
    
    \item\label{step1-2} For any $u\in V(K_q)\setminus \{v\}$, add $uw_1$ and $uw_2 $    to $E(F)$ in order. 

 For $uw_1$, if $c(uw_1)\in c^*(E (K_q ))$, let $X=\{e\in E(K_q):c(uw_1)=c^*(e)\}$. By \Cref{cl:fobedges}, there is a  $(u,v)$-path $P$ of order $\ell-1$ in $K_q$ avoiding $X$. Then $uPvw_1u$ is a rainbow $C_\ell$. Thus, assume that $c(uw_1)=c^*( vw_1 )$ or $c(uw_1)\in c^*(\{vw_2,w_1w_2\})$.  Then $uw_1w_2v$ or $uw_1v$ is rainbow  whose set of colors is disjoint with $c^*(E(K_q))$, respectively. By  \Cref{cl:fobedges} with $X=\emptyset$  and $F$ being rainbow-colored, there are rainbow $(u,v)$-paths of order  $\ell-2$ and $\ell-1$, respectively. By symmetry, the same holds for $uw_2 $.  
    \item\label{step1-1} Add  $vw_\ell$, $uw_\ell$ for each $u\in V(K_{q})\setminus \{v\}$, $w_1w_\ell$ and $w_2w_\ell$ to  $E(F)$.  
    
    For $vw_{\ell}$, at least one of $vw_2w_3\cdots w_{\ell}v$ and $vw_{\ell}w_{\ell+1}\cdots w_{2\ell-2}v$ must be a rainbow $C_\ell$. 

    For $uw_\ell$, by \Cref{cl:fobedges}, for any assignment of $c(uw_\ell),c(v{w_\ell})$ there is a  $(u,v)$-path $P$ of order $\ell-1$ in $K_q$  such that $\{c(uw_\ell),c(v{w_\ell})\}\cap c^*(E(P))=\emptyset$. Then $uw_\ell vPu$ is a rainbow $C_\ell$. 
 
  For $w_1w_\ell$, let distinct $u_1,u_2\in V(K_q)\setminus\{v\}$.  By \Cref{cl:fobedges}, for any assignment of $c(u_1w_1),c(u_2{w_\ell}),$ there is a  $(u_1,u_2)$-path $P$ of order $\ell-2\ge 3$ in $K_q$  such that $\{c(u_1w_1),c(u_2{w_\ell}),c(w_1w_\ell)\}\cap c^*(E(P))=\emptyset$. Then $u_1w_1w_\ell u_2Pu_1$ is a rainbow $C_\ell$. The same holds for $w_2w_\ell$.   

    \item\label{step1-3} For $i\in [\ell-1]\setminus\{1,2\}$ in increasing order,  add $w_1w_i$,   $uw_i$ for each $u\in V(K_q) \setminus \{v\}$, $vw_i$ and $w_j{w_i}$ for $j\in [i-2]\setminus \{1\}$  to $E(F)$.  

First,  consider $w_1w_i$.  We prove by induction on $i$  in increasing order. 

Assume first that $i=3$.  If $c(w_1w_i)\notin c^*(E(w_iw_{i+1 }\cdots w_\ell))$, since $q\ge 4\ell$, for any assignment of $c$, there is $x\neq v\in V(K_q)$ such that $c(x w_1),c(xw_\ell)\notin c^*(E(w_iw_{i+1 }\cdots w_\ell))$. Thus, $xw_1w_iw_{i+1 }\cdots w_\ell x$ is a rainbow $C_\ell$. Hence, assume that $c(w_1w_i)\in c^*(E(w_iw_{i+1 }\cdots w_\ell))$. If $\ell\ge 6$, since $q\ge 4\ell$, similarly, there exists $x\in V(K_q)\setminus \{v\}$ such that $c(xw_{i-1})\notin c^*(\{w_iw_{i-1 },vw_1\}) $. By \Cref{cl:fobedges}, there is an $(x,v)$-path $P$ of order $\ell-3\ge 3$ in $K_q$  such that $\{c(w_1w_i),c(xw_{i-1 })\}\cap c^*(E(P))=\emptyset$. Then $xw_{i-1 }w_iw_1vPx$ is a rainbow $C_\ell$.  If $\ell=5$, since $q\ge 4\ell$,  there are distinct $x,y\in V(K_q)\setminus \{v\}$ such that $c^*(w_{i-1 }w_i)\notin c(\{xw_1,xw_{i-1},yw_1,yw_{i-1}\})$. Then at least one of  $xyw_1w_iw_{i-1 }x$  and $yxw_1w_iw_{i-1 }y$ is rainbow.   

Assume  that  $ i=4$.  If $c(w_1w_i)\notin c^*(E(w_iw_{i+1}\cdots w_\ell))$,  since $q\ge 4\ell$, similarly, there are $x,y\in V(K_q)\setminus\{v\}$ such that $xw_1w_4w_5\cdots w_\ell yx$ or $yw_1w_4w_5\cdots w_\ell xy$ is rainbow.   If $c(w_1w_i)\in c^*(E(w_iw_{i+1}\cdots w_\ell))$, the same proof for the case that $i=3$ works here.

For $ 5\le i\le \ell-1$, if $c(w_1w_i)\notin c^*(E(w_iw_{i+1}\cdots w_\ell))$, since $q\ge 4\ell$,   there  are distinct  $x,y \in V(K_q)\setminus\{v\}$ such that  $c(xw_\ell),c(yw_1)\notin c^*(E(w_iw_{i+1}\cdots w_\ell))$. Again, since $q\ge 4\ell$, by \Cref{cl:fobedges}, there is an $(x,y)$-path $P$ of order $ (i-2)\ge 3$ in $K_q$ such that $c^*(E(P))\cap \{c(xw_\ell),c(yw_1),c(w_1w_i)\}=\emptyset$. Then $yw_1w_iw_{i+1}\cdots w_\ell xPy$ is a rainbow $C_\ell$. Thus, assume that  $c(w_1w_i)\in c^*(E(w_iw_{i+1}\cdots w_\ell))$. Then the same proof for $i=3$ works here.

Secondly, for  $uw_i$ where $u\in V(K_q)$ (first consider some $u\neq v$), fix any $u'\in V(K_q)\setminus \{v,u\} $. Since $|\{u'w_1,w_1w_i,uw_i\}|\le 3$, by \Cref{cl:fobedges}, for any assignment of $c$, there is a  $(u',u)$-path $P$ of order $\ell-2\ge 3$ in $K_q$ such that $c(\{u'w_1,w_1w_i,uw_i\})\cap c^*(E(P))=\emptyset$. Then $u'w_1w_iuPu'$ is a rainbow $C_\ell$. 

For $w_jw_i$ where $2\le j\le i-2$, fix any distinct $x,y\in V(K_q)\setminus \{v\} $. Since $|\{xw_j,w_jw_i,yw_i\}|\le 3$, by \Cref{cl:fobedges}, for any assignment of $c$,    there is an $(x,y)$-path $P$ of order $\ell-2\ge 3$ in $K_q$ such that $c(\{xw_j,w_jw_i,yw_i\})\cap c^*(E(P))=\emptyset$. Then $xw_jw_iyPx$ is a rainbow $C_\ell$.

    \item\label{step1-4} For $i\in [\ell+1,2\ell-2]$ in descending order, add $w_iw_{i+1-\ell}$,  $uw_i$ for each $u\in V(K_q) \setminus \{v\}$, $vw_i$ (if exists) and $w_k{w_i}$ for $k\in [i-\ell]$  to $E(F)$.  

    First,  consider $w_iw_{i+1-\ell}$.  We prove by induction on $i $ in descending order. 
    
    Note that $i+1-\ell<\ell$.  Since $w_{i+1-\ell}w_{i+2-\ell}\cdots w_{i-1}w_iw_{i+1-\ell}$ is of order $\ell$, we may assume that $c(w_iw_{i+1-\ell})\in c^*(E(w_{i+1-\ell}w_{i+2-\ell}\cdots w_{i-1}w_i)) $. 

Consider the base case that $i=2\ell-2$. Since $q\ge 4\ell$, there is $u\in V(K_q)\setminus\{v\}$ such that $c(u w_{i+1-\ell} )\neq c^*(vw_{i})$ and there is a  $(u,v)$-path $P$ of order $\ell-2\ge3$ with $c^*(E(P))\cap c(\{w_iw_{i+1-\ell},uw_{i+1-\ell} \})=\emptyset$. Then $uw_{i+1-\ell}w_ivPu$ is a rainbow $C_\ell$. 

Consider general $\ell+1\le i\le 2\ell-3$.  Since $c(w_iw_{i+1-\ell})\in c^*(E(w_{i+1-\ell}w_{i+2-\ell}\cdots w_{i-1}w_i)) $ and $q\ge 4\ell$, by replacing the path $w_1w_iw_{i-1}$ in \ref{step1-3} with $w_{i+1-\ell}w_iw_{i+1}$, the same result holds here.

Then for $uw_i$  where $u\in V(K_q)  $ and $w_k{w_i}$ where  $k\in [i-\ell]$  to $E(F)$,  by replacing  $w_1w_i $ in \ref{step1-3} with $w_{i+1-\ell}w_i $ or $w_kw_i$, the same result holds here.

    \item\label{step1-5} For any $i,j\in [2\ell-2]$ and $a,b \in [t]$, add $w_i^{(a)}w_j^{(b)}$ as long as it has not been added to $E(F)$.\\
  Take any distinct  vertices $u_1,u_2 \in V(K_q)\setminus \{v\}$. Since $|c(E(u_1w_i^{(a)}w_j^{(b)}u_2))\cap c^*(E(K_q))|\le 3$, there is a rainbow $(u_1,u_2)$-path $P$ in $K_q$ of order $\ell-2\ge3$ with   $c^*(P)\cap c(E(u_1w_i^{(a)}w_j^{(b)}u_2))=\emptyset$  by  \Cref{cl:fobedges}. It follows that $u_1w_i^{(a)}w_j^{(b)}u_2P u_1$ is a rainbow $C_\ell$.
\end{enumerate}
Observe that all the edges in $\bar{F}$ have been added to $F$. 
\medskip

Since $\ell-2<3$ when $\ell=4$, some of the above arguments do  not work for $\ell=4$.  
\medskip

\noindent{\bf Case 2.} $\ell=4$.  

Similar to the previous case, we add edges in $\bar{F}$ to $F$ according to the following order. 

    \begin{enumerate}[label=(S2.\arabic*)]
    \item\label{step2-1} Add $vw_4$ and then $uw_1,uw_2,uw_4$ for each $u\in V(K_q)\setminus\{v\}$. 

By \ref{step1-1} for $vw_4$ and  $uw_4$, we are done. For $uw_1 $, if $c(uw_1)\neq c^*(vw_1)$, by  \Cref{cl:fobedges}, then there is  a $(u,v)$-path $P$ of order $\ell-1$ in $K_q$ such that $c(uw_1)\notin c^*(E(P))$. Then $uw_1vPu$ is a rainbow $C_4$. Thus, assume that  $c(uw_1)=c^*(vw_1)$. Then $uw_1w_2vu$ is a rainbow $C_4$. The same argument works for $uw_2$.

\item\label{step2-2}  Add $w_1w_4,w_2w_4,w_1w_3$,   $uw_3$ for each $u\in V(K_q)\setminus \{v\}$ and $vw_3$. 

For $w_1w_4$, if $c(w_1w_4)\notin  c^*(E(w_1w_2w_3w_4))$, then $w_1w_2w_3w_4w_1$ is a rainbow $C_4$. Thus, assume that $c(w_1w_4)\in  c^*(E(w_1w_2w_3w_4))$. Pick any distinct $u_1,u_2\in V(K_q)\setminus \{v\}$, then $u_1w_4w_1u_2u_1$   or $u_1w_1w_4u_2u_1$ is rainbow. 

For $w_2w_4$, since $q\ge 8$, there exist distinct $u_1,u_2\in V(K_q)\setminus \{v\}$ such that $c^*(u_1u_2)\neq c(w_2w_4)$. Then similarly $u_1w_4w_2u_2u_1$  or $u_1w_2w_4u_2u_1$ is rainbow.

For $w_1w_3$, note that there exists $u\in V(K_q)\setminus \{v\}$ such that $c(uw_1),c(uw_2),c(uw_4)\notin \{c^*({w_2w_3}),$ $c^*(w_3w_4)\}$. Then either $uw_1w_3w_2u$ or $uw_1w_3w_4u$ is rainbow. 

For $uw_3$ where $u\in V(K_q)\setminus \{v\}$, $uw_3w_1w_4u$ is rainbow since  each edge is contained in $E(\bar{F})$. 

For $vw_3$,  by  \ref{step1-1}, we are done. 

\item\label{step2-3}  Add $w_2w_5,w_1w_5,w_3w_5$,
  $uw_5$ for each $u\in V(K_q)$.   

For $w_2w_5$, either $w_2w_3w_4w_5w_2$ or $vw_2w_5 w_6v$ is rainbow, since only $w_2w_5\in E(\bar{F})$. 

For $w_1w_5$, let $u\in V(K_q)\setminus \{v\}$. Then $uw_1w_5w_2u$ is rainbow since all the edges are contained in $E(\bar{F})$.

For $w_3w_5$, let $u\in V(K_q)\setminus \{v\}$. Then $uw_3w_5w_1u$ is rainbow since all the edges are contained in $E(\bar{F})$.

For $u\in V(K_q)$, $uw_3w_1w_5u$ is rainbow since all the edges are contained in $E(\bar{F})$. 

\item\label{step2-4}  Add $w_3w_6$, $uw_6$ for each $u\in V(K_q)\setminus \{v\}$,   $w_1w_6$, $w_2w_6$, and $w_4w_6$. 

 For $w_3w_6$, either $w_3w_4w_5w_6w_3$ or $vw_2w_3 w_6v$ is rainbow, since only $w_3w_6\in E(\bar{F})$. 

For $uw_6$ where $u\in V(K_q)\setminus \{v\}$, $uw_6w_3w_1u$ is rainbow since all the edges are contained in $E(\bar{F})$. 

For $w_1w_6,w_2w_6$ and $w_4w_6$ in order, let $u\in V(K_q)\setminus \{v\}$. We have that $uw_1w_6w_3u$, $uw_2w_6w_3u$, and $uw_4w_6w_3u$ are rainbow, respectively. 
 \item\label{step2-5} For any $i,j\in [2\ell-2]$ and $a,b \in [t]$, add $w_i^{(a)}w_j^{(b)}$ to $E(F)$.

 Note for $c(w_i^{(a)}w_j^{(b)})$, there are distinct $u_1,u_2\in V(K_q)\setminus \{v\}$ such that $c^*(u_1 u_2)\neq c(w_i^{(a)}w_j^{(b)})$. Then either $u_1w_i^{(a)}w_j^{(b)}u_2u_1 $ or $u_2w_i^{(a)}w_j^{(b)}u_1u_2 $ is rainbow. 
    \end{enumerate}

\medskip

Therefore, the conclusions hold. 
\end{proof} 

For $\s(n,C_3)$,   it was asked whether there exists a constant $c$ such that $\s(n, C_3)= 2n+c$  \cite{Li2025}. We prove the existence of such a constant
under the assumption that a bipartite
extremal graph exists.   
\begin{proposition}
For an integer $n$ with $n\ge 3$, if there is a bipartite graph $F$ in $\rw(n,C_3)$, then $F=K_{2,n-2}$ and so $\s(n,C_3)=2(n-2)$.
\end{proposition}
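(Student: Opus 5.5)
The plan is to exploit bipartiteness directly: a triangle needs an edge inside one side of a bipartition. Fix a bipartition $(A,B)$ of $F$ and a weakly $C_3$-rainbow saturated ordering $e_1,e_2,\dots$ of $E(\bar F)$. First, $e_1$ must lie in a triangle of $F+e_1$ whose other two edges are in $F$. Since $F$ is bipartite, $e_1$ joins two vertices on the same side, say $x,y\in A$, with a common neighbour $z\in B$. Moreover $e_1$ must lie in at least two triangles. Otherwise, coloring $c(e_1)=c^*(f)$ for one of the two $F$-edges $f$ of the unique triangle destroys rainbowness, exactly as in the lower-bound argument of \Cref{lem:ulbdd}. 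So $|N(x)\cap N(y)|\ge 2$.

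Next I would prove that every vertex has degree at least $2$ in $F$. Let $u$ have $d_F(u)\le 1$, and let $e=uw$ be the first added edge at $u$. Every triangle through $e$ in the current graph uses an edge $uz$ that is already present, so $uz\in E(F)$. If $d(u)=0$ there is no such edge. If $d(u)=1$, coloring $c(e)=c^*(e_u)$ kills every triangle through $e$. Hence $\delta(F)\ge 2$, which already gives $e(F)\ge n$. The main obstacle is then to push the count up to $2(n-2)$ and pin down the structure. For this I would track the first added edge at each vertex, as above: if $uw$ is the first added edge at $u$, the triangle must be $uzw$ with $uz\in F$. To survive the adversarial coloring $c(uw)=c^*(uz)$, there must be a second $F$-neighbour $z'$ of $u$ with $z'w$ present. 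In fact the number of such $z$ must be at least $2$ for each such $w$.

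The key step is to show that all vertices of one side, say $B$, are adjacent to the same two vertices $x,y\in A$, and that $A=\{x,y\}$. My argument here would be an extremality/minimality one. Since $F\in\rw(n,C_3)$ is edge-minimum and $K_{2,n-2}$ is itself weakly $C_3$-rainbow saturated, we get $e(F)\le 2(n-2)$. This holds because $xy$ lies in $n-2$ triangles, and afterwards every edge inside $B$, say $b b'$, lies in the triangles $bb'x$ and $bb'y$, which cannot both be monochromatic-blocked since the only new color is $c(bb'$$)$. Combined with $\delta\ge 2$, this gives $n\le e(F)\le 2n-4$.

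I would then show that each vertex $u$ must have two neighbours that are already joined before $u$ gains a new edge. Inducting along the ordering, the vertices of degree exactly $2$ in $F$ must have their two neighbours joined early, and since $F$ is bipartite these two neighbours lie on the same side. Counting $\sum_{b\in B} d(b)=e(F)$ together with the requirement that the side containing the early edges has a pair sharing all low-degree vertices should force $|A|=2$ and hence $F=K_{2,n-2}$. I expect this forcing argument — ruling out bipartite graphs with some vertices of degree $\ge 3$ yet fewer than $2n-4$ edges — to be the hardest part. I would attack it by a discharging/count on the first-added-edge structure at each vertex.
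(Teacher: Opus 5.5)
\paragraph{Gap in the proposal.} Your preliminary facts are sound. The first edge $e_1$ joins two vertices of the same side that have at least two common neighbors. Your first-added-edge argument gives $\delta(F)\ge 2$, and $e(F)\le 2(n-2)$ holds because $K_{2,n-2}$ is weakly $C_3$-rainbow saturated. However, the actual content of the proposition, that a bipartite $F$ must be $K_{2,n-2}$, is never proved. Your last step only asserts that a count ``should force'' $|A|=2$, via an unspecified discharging argument.

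Moreover, if ``joined'' means adjacent to each other, the lemma you plan to induct on is false. It already fails for a valid ordering of $K_{2,n-2}$. With $A=\{x,y\}$, you may add an edge $b_1b_2$ inside $B$ first. The triangles $b_1b_2x$ and $b_1b_2y$ use four distinct $F$-colors, and $c(b_1b_2)$ can spoil at most one of them. So the degree-$2$ vertex $b_1$ receives a new edge before its neighbors $x,y$ are joined. What the first-edge condition at $u$ actually yields is weaker: two $F$-neighbors of $u$ are both adjacent to the other endpoint $w$ of that edge. This is a local condition that does not determine the global structure.

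\paragraph{The missing idea.} Look not at the first added edge at a vertex, but at the first added edge $uv$ that \emph{crosses} the bipartition $(X,Y)$, say with $u\in X$ and $v\in Y$. At that moment every triangle through $uv$ has one of two forms:
\begin{itemize}
\item $uvx$ with $x\in X$. It uses the inside edge $ux$, which is not in $F$ and so was added earlier, and the crossing edge $vx$, which must lie in $F$ because no crossing edge was added before $uv$.
\item $uvy$ with $y\in Y$. It uses the inside edge $vy$ and the crossing edge $uy\in E(F)$.
\end{itemize}
Set $c(ux)=c^*(vx)$ and $c(vy)=c^*(uy)$ for all such $x,y$. This is injective, since the partner $F$-edges are pairwise distinct, and it kills every triangle through $uv$ simultaneously. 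Hence $\bar F$ contains no crossing edge, i.e.\ $F=K_{t,n-t}$; this is the paper's argument.

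Your own facts then finish the proof. The bound $\delta(F)\ge 2$ gives $2\le t\le n-2$. Since $t(n-t)-2(n-2)=(t-2)(n-t-2)\ge 0$, with equality only for $t\in\{2,n-2\}$, the bound $e(F)\le 2(n-2)$ forces $F\cong K_{2,n-2}$. No counting or discharging is needed.
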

\begin{proof}
Let $F=(X,Y)\in \rw(n,C_3)$ be a bipartite graph.  Suppose $F\neq K_{2,n-2}$. Let $e_1,\ldots, e_m$ be a weakly  rainbow saturated ordering of $\bar{F}$. Denote by $e=uv$  the first edge in $e_1,\ldots, e_m$ of the form $u\in X$ and $v\in Y$.  Since $F\in \rw(n,C_3)$, the triangles that contain  $uv$ are of the form $uvxu$ where $uv,ux\in E(\bar{F})$ and $ vx\in E(F)$ by the choice of $e$ or of the form $uvyu$ where $uv,vy\in E(\bar{F})$ and $ uy\in E(F)$ by the choice of $e$.  However, there is no  rainbow $C_3$ created by adding $e$ when $c(ux)=c^*(vx)$ and $c(vy)=c^*(uy)$. Thus, $F=K_{t,n-t}$. 
 One can verify that $K_{t,n-t}$ is weakly $C_3$-rainbow saturated for $n-2\ge t\ge 2$. 
Since $F\in \rw(n,C_3)$, it follows that $F=K_{2,n-2}$.  Thus,  $\s(n,C_3)=2(n-2)$.
\end{proof}

\section*{Acknowledgments}
Bo, Liu and Lian are supported by Fundamental and Interdisciplinary Disciplines Breakthrough Plan of the Ministry of Education of China (JYB2025XDXM207).

\end{document}